\documentclass[11pt,a4paper]{article}
\usepackage[T1]{fontenc}
\usepackage{lmodern,amsmath,amssymb,amsthm,mathtools,microtype,booktabs}
\usepackage[margin=27mm]{geometry}
\usepackage[colorlinks=true,linkcolor=blue,citecolor=blue,urlcolor=blue]{hyperref}
\hypersetup{pdftitle={From Hermite Chaos to an Attractive Log-Gas:
Explicit High-Rank Limits of Iterated-Logarithm and Tail Constants},
pdfauthor={Elina Moldavskaya}}
\newtheorem{theorem}{Theorem}[section]
\newtheorem{lemma}[theorem]{Lemma}
\newtheorem{proposition}[theorem]{Proposition}
\newtheorem{corollary}[theorem]{Corollary}
\theoremstyle{remark}\newtheorem{remark}[theorem]{Remark}
\newcommand{\dd}{\,\mathrm d}
\newcommand{\E}{\mathbb E}
\newcommand{\Var}{\operatorname{Var}}

\newcommand{\R}{\mathbb R}
\newcommand{\norm}[1]{\left\lVert#1\right\rVert}
\newcommand{\ip}[2]{\left\langle#1,#2\right\rangle}
\numberwithin{equation}{section}
\newcommand{\cA}{\mathcal A}
\newcommand{\cB}{\mathcal B}
\newcommand{\Fm}{\mathcal F_d}
\title{From Hermite Chaos to an Attractive Log-Gas:
Explicit High-Rank Limits of Iterated-Logarithm and Tail Constants}
\author{Elina Moldavskaya\\[4pt]
\small Technion--Israel Institute of Technology\\
\small\href{mailto:elina.mol@technion.ac.il}{\texttt{elina.mol@technion.ac.il}}}
\date{}
\begin{document}
\maketitle
\begin{abstract}
Let $Z$ be a standard Hermite process of order $m\ge2$ and
Hurst index $H\in(1/2,1)$. For $d<H-1/2$, let $Y$ be the
variance-normalized version of the weighted integral
$\int_0^1 x^{-d}\,\mathrm dZ(x)$. Its right tail satisfies
\[
 \log\Pr\{Y>t\}\sim
 -\frac12\left(\frac{t}{\Lambda_m}\right)^{2/m},
 \qquad t\to\infty,
\]
where the positive constant $\Lambda_m$ admits an
infinite-dimensional variational representation. The same constant
governs the sharp law of the iterated logarithm for weighted sums
of nonlinear functions of long-memory Gaussian sequences.
A general closed-form evaluation at finite order is not known.

As $m\to\infty$ with $H$ and $d$ fixed, the normalized constants
$\sqrt{m!}\,\Lambda_m$ decrease to an explicit positive function
of $H$ and $d$, with relative error $O(m^{-1})$. This function
also gives a lower bound for the normalized constant at every
finite order. An exact R\'enyi variational representation at each
order yields, in the high-rank limit, the entropy--energy
variational problem of an attractive logarithmic gas. The limiting
free energy is evaluated through the Selberg integral and the
Barnes $G$ function.

For $d\le0$, convergence remains uniform up to the memory boundary
$H=1/2$. After division by the square-root boundary factor
$\sqrt{2H-1}$, the high-rank and boundary limits commute. Their
common value is explicit; for constant weights it equals
$2^{3/2}\pi^3G(1/2)^8=2^{11/6}\pi e\,\mathsf A^{-12}$, where
$\mathsf A$ is the Glaisher--Kinkelin constant. The quadratic and cubic coefficients of the fixed-order expansion
of the normalized constant as $H\uparrow1$ converge, as $m\to\infty$,
to those of the explicit limiting function. Global finite-order bounds and a concentrating-weight
limit complement the main result.
\end{abstract}
\noindent\textbf{Keywords:} Hermite chaos; long-range dependence;
law of the iterated logarithm; large deviations; logarithmic gas;
Selberg integral; Barnes $G$ function.

\noindent\textbf{Mathematics Subject Classification (2020):}
Primary 60F10, 60F15, 60G18;
Secondary  60H05, 82B21, 33B15.

\section{Introduction and main results}\label{sec:introduction}
For a Gaussian limit, variance normalization determines the constant in
the law of the iterated logarithm. For a limit in a higher Wiener chaos,
the constant depends on a nonlinear optimization problem. The same
optimization determines the leading logarithmic tail of the limiting
random variable. The purpose of this paper is to evaluate its limit as
the Hermite rank increases while the effective memory is kept fixed.

The probabilistic input is the weighted law of the iterated logarithm
in \cite{MoldavskayaLIL}. For a stationary Gaussian sequence $(X_t)$
with covariance exponent $\alpha$, a functional $\mathcal G$ of Hermite
rank $m$, and regularly varying weights $a_t=t^{-d}\ell_a(t)$, put
\[
 S_n=\sum_{t=1}^n a_t\mathcal G(X_t),\qquad A_n^2=\Var(S_n).
\]
Under the hypotheses recalled in Section~\ref{sec:model},
\[
 \limsup_{n\to\infty}
 \frac{|S_n|}{A_n(2\log\log n)^{m/2}}=\Lambda_m(\alpha,d)
 \quad\text{almost surely},
\]
where
\[
 m\ge2,\qquad 0<\alpha<1/m,\qquad 2d+m\alpha<1.
\]
Write $s=m\alpha$.
Equivalently $s=2-2H$, where $H=1-m\alpha/2$ is the Hurst index of the
limiting Hermite process. Thus fixing $s$ means fixing $H$; the
admissibility condition $2d+s<1$ reads $d<H-1/2$, and the memory
boundary $s\uparrow1$ is $H\downarrow1/2$, with $1-s=2H-1$.
The leading Hermite chaos has covariance exponent
$s$, and the variance has regularly varying power $2-2d-s$.
Fixing $(s,d)$ while increasing $m$ therefore preserves this power
and changes the order of the nonlinearity. The limit below concerns
deterministic constants from fixed-rank stochastic theorems. The tensor
definition in Section~\ref{sec:model} supplies an independent starting
point for all the deterministic results proved here; the stochastic
theorem identifies their meaning for weighted partial sums.

Set
\begin{equation}\label{lr:eq:Z}
 \eta=1-d,\qquad \mu_d(\dd x)=\eta x^{\eta-1}\dd x,\qquad
 Z_d(s)=\frac{2\eta^2B(\eta,1-s)}{2\eta-s}.
\end{equation}
\begin{theorem}[Explicit high-rank limit]\label{lr:thm:main}
Fix $d<1/2$ and $s\in(0,1)$ with $2d+s<1$. Put
\begin{equation}\label{lr:eq:Fgamma}
 \begin{split}
 \Fm(s)=\log \eta+\int_0^1\bigg[
 &\log\Gamma\left(\eta-\frac{st}{2}\right)
 +2\log\Gamma\left(1-\frac{st}{2}\right)\\
 &-\log\Gamma\left(\eta+1-\frac{s(1+t)}2\right)\bigg]\dd t,
 \end{split}
\end{equation}
and
\begin{equation}\label{lr:eq:Xi}
 \Xi_d(s)=\frac{\exp\{\Fm(s)\}}{\sqrt{Z_d(s)}}.
\end{equation}
Then $0<\Xi_d(s)<1$ and
\begin{equation}\label{lr:eq:main}
 \sqrt{m!}\,\Lambda_m(s/m,d)\downarrow\Xi_d(s),\qquad
 0\le\log\frac{\sqrt{m!}\Lambda_m(s/m,d)}{\Xi_d(s)}
 \le\frac{K_{s,d}}m
\end{equation}
for a finite constant $K_{s,d}$ independent of $m\ge2$.
The integral in \eqref{lr:eq:Fgamma} has the finite Barnes $G$ evaluation
in Proposition~\ref{lr:prop:Barnes}.
\end{theorem}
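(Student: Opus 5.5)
The plan is to reduce the claim to an explicit $m$-fold integral of Selberg type and then pass to the limit $m\to\infty$ using Riemann sums of $\log\Gamma$. First I will rewrite the tensor definition of $\Lambda_m$ from Section~\ref{sec:model} as a normalized multilinear maximization:
\[
\sqrt{m!}\,\Lambda_m=\sup_{\|h\|_2=1}\frac{\langle f_m,h^{\otimes m}\rangle}{\|f_m\|},
\]
where $f_m$ is the kernel of $\int_0^1x^{-d}\,\dd Z$. This kernel factorizes through a one-variable operator $K_\alpha h(x)=\int (x-y)_+^{-(1+\alpha)/2}h(y)\,\dd y$, with $\alpha=s/m$. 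Hence $\langle f_m,h^{\otimes m}\rangle=c_m\int_0^1 x^{-d}(K_\alpha h)^m\,\dd x$, and $\|f_m\|^2=c_m'\,Z_d(s)$ after the change of measure to $\mu_d$. The constants $c_m,c_m'$ are Beta-function normalizations that I will track explicitly.

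Next I will establish the exact R\'enyi representation. Write $\log\int (K_\alpha h)^m\,\dd\mu_d$ as the Gibbs/Donsker--Varadhan supremum over probability densities. Then exchange the suprema over $h$ and over the density. For a fixed configuration of $m$ points, the inner optimization over $h$ is a Cauchy--Schwarz problem whose value is a Gram determinant. Equivalently, it is a product of pair interactions $\prod_{i<j}|x_i-x_j|^{-s/m}$ coming from $\int (x_i-y)_+^{-(1+\alpha)/2}(x_j-y)_+^{-(1+\alpha)/2}\,\dd y\propto|x_i-x_j|^{-\alpha}$. This produces an $m$-particle attractive log-gas at inverse temperature of order $s/m$, confined by $\mu_d$ on $[0,1]$.

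The lower bound $\sqrt{m!}\Lambda_m\ge\Xi_d(s)$ will come from inserting the explicit trial object suggested by this structure. Its value is the Selberg integral with $a=\eta$, $b=1$, $\gamma=-s/(2m)$, whose closed form is
\[
\prod_{j=0}^{m-1}\frac{\Gamma(\eta-js/2m)\,\Gamma(1-js/2m)\,\Gamma(1-(j+1)s/2m)}{\Gamma(\eta+1-(m+j-1)s/2m)\,\Gamma(1-s/2m)}.
\]
Its $m$-th root, divided by $\sqrt{Z_d(s)}$ and combined with the Beta normalizations, is exactly $\exp$ of a Riemann sum for $\Fm(s)$. Concavity and monotonicity of $\log\Gamma$ give that these Riemann sums dominate the integral, with error $O(m^{-1})$ by Euler--Maclaurin, since $\log\Gamma$ is smooth on the relevant compact range because $2d+s<1$. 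The bound $0<\Xi_d<1$ follows from Stirling-type inequalities on the same expression, and equivalently from the fact that it is a limit of normalized Cauchy--Schwarz ratios.

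The main obstacle is the matching upper bound $\log(\sqrt{m!}\Lambda_m/\Xi_d)\le K_{s,d}/m$, together with monotonicity in $m$. For the upper bound, I will apply H\"older/Brascamp--Lieb to the multilinear form to dominate the R\'enyi functional by the free energy of the $m$-particle gas. The log-gas free energy at finite $m$ is again a Selberg integral, so it equals the same Gamma product up to a factor $e^{O(1/m)}$. The delicate points are that the Selberg evaluation only gives the partition function, and that exchanging the two suprema must lose no more than the entropy correction of order $\log m!/m^2$-type terms, which is $O(1/m)$. For monotonicity, I will try a subadditivity argument: embed the $m$-particle configuration into the $(m+1)$-particle one by splitting a particle. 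Failing that, I will compare consecutive Selberg products directly, using log-convexity of $\Gamma$. Finally, the Barnes $G$ form is quoted from Proposition~\ref{lr:prop:Barnes}.
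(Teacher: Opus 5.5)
Your first step, the time-domain factorization through $(x-y)_+^{-(1+\alpha)/2}$, is fine. The proposal then rests on a false identification: the rank-$m$ problem is not an $m$-particle log-gas, and the $m$-point Selberg integral is neither a trial value nor a bound for it.

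The functional $\int_0^1x^{-d}(K_\alpha h)^m\dd x$ involves a single point $x$, so exchanging suprema produces no configuration $x_1,\dots,x_m$. Moreover, a Gram determinant $\det[|x_i-x_j|^{-\alpha}]$ is not the product $\prod_{i<j}|x_i-x_j|^{-s/m}$.

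What is true is a two-body statement (Propositions~\ref{quad:prop:dual} and~\ref{lr:prop:renyi}). By $L^p$ duality, $\Lambda_m=D_m^{m/2}/\sqrt{m!Z_m}$. Normalizing $f$ to a density then gives $\frac m2\log D_m=\sup_\rho J_m(\rho)$, where $J_m$ pairs the energy $\frac m2\log\iint e^{s\ell/m}\rho\otimes\rho\,\dd\mu_d^{\otimes2}$ with the R\'enyi penalty $D_{p_m}(\rho\|\mu_d)$. The log-gas appears only afterwards, through an independent limit $N\to\infty$ that evaluates $\sup J_\infty$.

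Your lower-bound step also fails concretely, because the Riemann sums do not dominate the integral. At $d=0$, $N=2$,
\[
 \tfrac12\log\mathcal Z_2(s,0)=-\tfrac12\log\bigl[(1-\tfrac s2)(1-\tfrac s4)\bigr]=\tfrac38s+O(s^2),
 \qquad \mathcal F_0(s)=\tfrac34s+O(s^2).
\]
In general, the linear coefficient of $N^{-1}\log\mathcal Z_N$ is $(N-1)/N$ times that of $\Fm$. So finite-$N$ Selberg free energies approach $\Fm(s)$ from below, which is opposite to $\sqrt{m!}\Lambda_m\downarrow\Xi_d$. Neither the bound $\sqrt{m!}\Lambda_m\ge\Xi_d$ nor the monotonicity in $m$ can be extracted from them.

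In the paper, no trial function is needed for the lower bound. It follows from convergence together with Proposition~\ref{prop:rank-transfer}, a one-line H\"older argument with $u=f^{p_m/p_k}$ giving $D_m(\alpha,d)\le D_k(m\alpha/k,d)^{k/m}$.

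For the upper bound you are missing two ingredients:
\begin{enumerate}
\item A uniform $L^\infty$ bound on maximizing densities, obtained from the Euler equation $\rho_m=(R_\alpha\rho_m)^{m-1}/N_m$ and the row integral $C_s$ (Lemma~\ref{lr:lem:density}). Only with this bound is the remainder in $\frac m2\log\E e^{s\ell/m}-\frac s2\E\ell$ uniformly $O(1/m)$. After that, $D_{p_m}\ge H$ gives $J_m\le J_\infty+A_{s,d,M}/m$ (Lemma~\ref{lr:lem:error}).
\item The identity $\sup J_\infty=\lim_N N^{-1}\log\mathcal Z_N$ (Lemma~\ref{lr:lem:meanfield}). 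Its nontrivial half is a large-deviation upper bound for an attractive singular interaction, supplied by Berman's theorem; Gibbs' inequality does not provide it.
\end{enumerate}
Only then do the Selberg product and its Riemann sums yield $\Fm(s)$.

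Finally, being a limit of ratios below one gives only $\Xi_d\le1$. Strictness uses $\Xi_d\le\sqrt2\Lambda_2(s/2,d)<1$, from the strict H\"older ceiling in Theorem~\ref{thm:finite-bounds}. Also, $\log m!/m^2$ is of order $m^{-1}\log m$, not $O(1/m)$.
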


\begin{corollary}[A lower bound at every rank]\label{lr:cor:finite}
For every triple in \eqref{quad:eq:region},
\begin{equation}\label{lr:eq:finitebound}
 \Lambda_m(\alpha,d)\ge\frac{\Xi_d(m\alpha)}{\sqrt{m!}}.
\end{equation}
If $2\le k\le m$, then
\begin{equation}\label{lr:eq:chain}
 \Xi_d(s)\le\sqrt{m!}\Lambda_m(s/m,d)
 \le\sqrt{k!}\Lambda_k(s/k,d)<1.
\end{equation}
\end{corollary}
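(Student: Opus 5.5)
The corollary is essentially a rereading of Theorem~\ref{lr:thm:main}, and I would derive it in three short steps: translate variables, invoke monotonicity for the chain, and prove the upper bound $<1$ separately.

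\emph{Step 1: the lower bound \eqref{lr:eq:finitebound}.} Take a triple $(m,\alpha,d)$ in the admissible region, so $m\ge2$, $0<\alpha<1/m$ and $2d+m\alpha<1$. Set $s=m\alpha\in(0,1)$. Then $2d+s<1$, and since $s>0$ this also forces $d<1/2$. Hence $(s,d)$ satisfies the hypotheses of Theorem~\ref{lr:thm:main}, and $\Lambda_m(\alpha,d)=\Lambda_m(s/m,d)$. The two-sided estimate in \eqref{lr:eq:main} gives $\log\bigl(\sqrt{m!}\,\Lambda_m(s/m,d)/\Xi_d(s)\bigr)\ge0$. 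Dividing by $\sqrt{m!}$ yields \eqref{lr:eq:finitebound}.

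\emph{Step 2: the chain \eqref{lr:eq:chain}.} Fix $(s,d)$ with $2d+s<1$. For every rank $j\ge2$, the point $(j,s/j,d)$ is admissible, because $s/j<1/j$ and $2d+j(s/j)=2d+s<1$. The statement ``$\sqrt{m!}\Lambda_m(s/m,d)\downarrow\Xi_d(s)$'' in Theorem~\ref{lr:thm:main} asserts that the sequence $j\mapsto\sqrt{j!}\Lambda_j(s/j,d)$ is nonincreasing and converges to $\Xi_d(s)$. A nonincreasing sequence dominates its limit, so for $2\le k\le m$ we get
\[
\Xi_d(s)\le\sqrt{m!}\Lambda_m(s/m,d)\le\sqrt{k!}\Lambda_k(s/k,d).
\]
If the monotonicity is established in the body of the paper only as a lemma, for instance by a tensorization or contraction argument comparing consecutive ranks, I would cite that lemma directly at this point.

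\emph{Step 3: the strict upper bound $\sqrt{k!}\Lambda_k(s/k,d)<1$.} This is the only part not read off verbatim from the theorem, and I expect it to be the main obstacle. By Step~2 it suffices to prove it at $k=2$, but a direct argument valid for every $k$ is cleaner. I would use the variational (tensor) definition of $\Lambda_k$ from Section~\ref{sec:model}. In that definition, $\Lambda_k$ is a supremum of a $k$-linear form $\langle \mathcal K, h^{\otimes k}\rangle$ over unit vectors $h$, normalized by $\norm{\mathcal K}$ in the tensor space, with the factor $\sqrt{k!}$ coming from the chaos variance normalization.
\begin{itemize}
\item[] \emph{Non-strict bound.} Cauchy--Schwarz gives $\langle\mathcal K,h^{\otimes k}\rangle\le\norm{\mathcal K}\,\norm{h}^k$, so $\sqrt{k!}\Lambda_k\le1$.
\item[] \emph{Strictness.} Equality would require $\mathcal K$ to be proportional to a rank-one tensor $h^{\otimes k}$. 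This fails, because the kernel $\prod_{i<j}|x_i-x_j|^{-s/k}$, weighted by $x^{-d}$, does not factorize for $s>0$. One must also check that the supremum is attained, or that the defect is bounded away from $0$; compactness of the associated Hilbert--Schmidt operator should give attainment.
\end{itemize}
If the paper already records $\sqrt{k!}\Lambda_k<1$ among the global finite-order bounds, I would simply cite it. Otherwise I would carry out the Cauchy--Schwarz argument above, with the non-factorization of the kernel supplying strictness.
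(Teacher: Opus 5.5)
Your Steps 1--2 are exactly the paper's argument: the monotonicity behind the arrow $\downarrow$ in Theorem~\ref{lr:thm:main} is Proposition~\ref{prop:rank-transfer}, and a nonincreasing sequence dominates its limit $\Xi_d(s)$. For Step 3 the paper cites the strict ceiling $\Lambda_k(\alpha,d)<(k!)^{-1/2}$ from Theorem~\ref{thm:finite-bounds}, so citing it is the right move. That ceiling is proved via the equality case of H\"older in $D_k\le Z_k^{1/k}$, since $|x-y|^{-k\alpha}$ cannot factor as $c\,f(x)^pf(y)^p$. In your Cauchy--Schwarz fallback, the tensor that must fail to be rank one is $Q_1$, not the Selberg integrand $\prod_{i<j}|x_i-x_j|^{-s/k}$, which plays no role in this step.
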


The theorem has a uniform extension that identifies the limiting
memory-boundary coefficient. For fixed $m\ge2$ and $d\le0$, let
\[
 C_{m,d}=\lim_{s\uparrow1}\frac{\Lambda_m(s/m,d)}{\sqrt{1-s}}.
\]
Existence and positivity follow from Theorem~\ref{thm:critical}.
The restriction $d\le0$ allows $s\uparrow1$ within the admissible region
and makes the reference density $(1-d)x^{-d}$ bounded, as used in the
uniform estimates.

\begin{theorem}[The memory boundary and the rank limit]\label{thm:boundary-rank}
Fix $d\le0$. There is a finite constant $\mathcal K_d$, independent
of $m\ge2$ and $0<s<1$, such that
\begin{equation}\label{eq:uniform-rank}
 0\le\log\frac{\sqrt{m!}\Lambda_m(s/m,d)}{\Xi_d(s)}
 \le\frac{\mathcal K_d}{m}.
\end{equation}
Consequently,
\begin{equation}\label{eq:boundary-rank}
 \sqrt{m!}C_{m,d}\downarrow\mathcal C_d,
 \qquad
 0\le\log\frac{\sqrt{m!}C_{m,d}}{\mathcal C_d}
 \le\frac{\mathcal K_d}{m},\qquad
 \mathcal C_d=\frac{e^{\mathcal F_d(1)}}\eta
                  \sqrt{\frac{2\eta-1}{2}}.
\end{equation}
As $m\to\infty$ and $s\uparrow1$, independently,
\begin{equation}\label{eq:joint-boundary}
 \frac{\sqrt{m!}\Lambda_m(s/m,d)}{\sqrt{1-s}}
 =\mathcal C_d\,[1+O_d(m^{-1}+1-s)].
\end{equation}
In particular the two iterated limits agree. At $d=0$,
\[
 \mathcal C_0=2^{3/2}\pi^3G(1/2)^8=\frac{2^{11/6}\pi e}{\mathsf A^{12}},\qquad
 \mathsf A=\exp\{1/12-\zeta'(-1)\}.
\]
\end{theorem}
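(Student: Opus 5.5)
The plan is to reuse the proof of Theorem~\ref{lr:thm:main}: we only have to check that every constant in it stays bounded as $s\uparrow1$ when $d\le0$. That proof has two inequalities. The lower bound $\sqrt{m!}\Lambda_m\ge\Xi_d$ comes from the exact R\'enyi representation at each order, and it holds for every admissible $s$ with no loss, so it needs no change. The upper bound compares, at rank $m$, the discrete Rényi/entropy functional with the continuum log-gas free energy $\Fm(s)$; the $O(m^{-1})$ error comes from discretization (Riemann sums of $\log\Gamma$-type terms) and from a Laplace-type concentration estimate. I would track each error term explicitly as a function of $s$. The one place where $s\uparrow1$ could cause trouble is the singular interaction kernel $|x-y|^{-s}$ against $\mu_d$. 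Its normalizer $Z_d(s)=2\eta^2B(\eta,1-s)/(2\eta-s)$ blows up like $(1-s)^{-1}$.

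The key step is to factor this blow-up out exactly. Write the order-$m$ variational problem in terms of the normalized kernel $(1-s)|x-y|^{-s}$. Since $d\le0$, the reference density $\eta x^{-d}$ is bounded by $\eta$. Hence all relevant quantities — the entropy of the optimizer, the energy of the optimizer, and the second-order (Hessian) term in the $1/m$ expansion — are bounded by constants depending only on $d$. The mechanism is that $(1-s)\int_0^1|x-y|^{-s}\,\mathrm dy\le 2$ uniformly. I would first prove one uniform lemma: for probability densities $\rho$ bounded by $\eta$ or by a fixed multiple of it, the normalized energy and its discrete Riemann-sum approximation differ by at most $C_d/m$, uniformly in $s\in(0,1)$. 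With this lemma in place, the argument of Theorem~\ref{lr:thm:main} gives \eqref{eq:uniform-rank} with $\mathcal K_d=\sup_s K_{s,d}<\infty$.

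I expect this uniform lemma to be the main obstacle. Near the diagonal, the Riemann sum of $|x-y|^{-s}$ loses a term of order $m^{s-1}$, which is not $O(m^{-1})$ uniformly. I would try two remedies in turn:
\begin{itemize}
\item First, do not discretize the diagonal. Keep the exact Selberg/Gamma product at rank $m$, namely the finite product of $\Gamma(\eta-sj/(2m))$-type factors underlying Proposition~\ref{lr:prop:Barnes}, and compare it with the integral $\Fm(s)$ by Euler--Maclaurin applied to $\log\Gamma$. The arguments $1-st/2\ge1/2$ and $\eta-st/2\ge1/2$ stay away from the poles, so $\log\Gamma$ and its derivatives are bounded there uniformly for $s\le1$. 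The Euler--Maclaurin remainder is then $O_d(m^{-1})$ uniformly.
\item The divergent factor $\Gamma(1-s)$ appears identically at both ends (through $Z_d$). It therefore cancels in the ratio and does not need to be estimated.
\end{itemize}

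The consequences then follow quickly. Divide \eqref{eq:uniform-rank} by $\sqrt{1-s}$ and let $s\uparrow1$, using the existence of $C_{m,d}$ from Theorem~\ref{thm:critical}. Continuity of $\Fm$ up to $s=1$ (it is an integral of smooth $\log\Gamma$ terms) gives $\Xi_d(s)/\sqrt{1-s}\to\mathcal C_d$. The form of $\mathcal C_d$ comes from $B(\eta,1-s)(1-s)\to1$, since $Z_d(s)(1-s)\to 2\eta^2/(2\eta-1)$. This yields \eqref{eq:boundary-rank}. Monotonicity in $m$ passes to the limit from \eqref{lr:eq:chain}. For \eqref{eq:joint-boundary}, combine the uniform $O(m^{-1})$ bound with the expansion $\Xi_d(s)/\sqrt{1-s}=\mathcal C_d(1+O_d(1-s))$. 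That expansion holds because $\Fm$ is $C^1$ on $[0,1]$ and $(1-s)B(\eta,1-s)=\Gamma(\eta)\Gamma(2-s)/\Gamma(\eta+1-s)$ is smooth at $s=1$.

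Finally, at $d=0$ we have $\eta=1$, and we evaluate $\mathcal F_0(1)$ through the Barnes formula of Proposition~\ref{lr:prop:Barnes}. The integrals $\int_0^1\log\Gamma(1-t/2)\,\mathrm dt$ and $\int_0^1\log\Gamma(3/2-t)\,\mathrm dt$ reduce to $\log G$ at $1/2$, $1$ and $3/2$ (with $G(3/2)=G(1/2)\Gamma(1/2)$). This gives $\mathcal C_0=2^{3/2}\pi^3G(1/2)^8$. The standard identity $\log G(1/2)=\tfrac1{24}\log2-\tfrac14\log\pi+\tfrac18-\tfrac32\log\mathsf A$ then converts it to $2^{11/6}\pi e\,\mathsf A^{-12}$.
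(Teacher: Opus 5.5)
There is a genuine gap in the central step, the uniform upper bound in \eqref{eq:uniform-rank}. You have misidentified where the $1/m$ error in Theorem~\ref{lr:thm:main} comes from. The Hermite rank $m$ never plays the role of a particle number. The Selberg product $\mathcal Z_N(s,d)$ is used only as $N\to\infty$ at fixed $s$, to identify the value $F^{\mathrm{var}}_d(s)=\Fm(s)$ (Lemma~\ref{lr:lem:meanfield}, Proposition~\ref{lr:prop:Barnes}). Setting $N=m$ and applying Euler--Maclaurin to $\frac1m\log\mathcal Z_m(s,d)$ controls the mean-field convergence in $N$. But $\frac1m\log\mathcal Z_m(s,d)$ is not $\frac m2\log D_m(s/m,d)$, and the finite-rank constant has no known closed form. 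So your first remedy says nothing about $\Lambda_m$, and the near-diagonal Riemann-sum loss $m^{s-1}$ plays no role.

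Your second remedy is correct but only reduces the problem. Since $Z_d(s)$ appears in both $\sqrt{m!}\Lambda_m$ and $\Xi_d$, the claim is exactly $\frac m2\log D_m(s/m,d)-\Fm(s)\le\mathcal K_d/m$ uniformly in $s$. The actual $1/m$ mechanism is Lemma~\ref{lr:lem:error} applied to a maximizer $\rho_m$. Its constant $A_{s,d,M}$ depends on a sup-norm bound $M$ for $\rho_m$, and Lemma~\ref{lr:lem:density} only gives $M=C_s^2$ with $C_s\asymp(1-s)^{-1}$. Hence $K_{s,d}=A_{s,d,C_s^2}$ grows like $(1-s)^{-4}$, and your $\mathcal K_d=\sup_sK_{s,d}$ is infinite.

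What is missing is a bound $\|\rho_m\|_\infty\le\mathsf M_d$ on maximizing densities that is uniform in both $m$ and $s$. Your assertion that boundedness of the reference density by $\eta$ bounds the optimizer's entropy and energy is not an argument. Normalizing the kernel by $(1-s)$ does not help either. The rank-$m$ Euler equation $\rho_m=(R_{s/m}\rho_m)^{m-1}/N_m$, combined with H\"older at exponents $(m,p_m)$, produces precisely the unnormalized divergent row integral $C_s^{1/m}$.

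The paper's fix is Lemma~\ref{lem:uniform-density}:
\begin{itemize}
\item It applies H\"older with the smaller exponent $r=7m/8$, so the row integral becomes $\sup_x\int|x-y|^{-7s/8}\mu_d(\dd y)\le\mathsf C_d$. This is bounded uniformly for $s<1$ when $d\le0$.
\item It interpolates $\|\rho\|_q$ between $L^{p_m}$ and $L^\infty$.
\item Nonnegativity of the maximal value and the Euler equation then give the closed pair $D\le\frac87c+\frac27h$ and $h\le\frac43c+\frac76D$, whence $h\le4c$.
\end{itemize}
With $M=\mathsf M_d$ fixed, $A_{s,d,\mathsf M_d}\le\frac{\mathsf M_d^2}{4}\iint\ell^2e^{\ell/2}\dd\mu_d^{\otimes2}<\infty$, because $s\le1$ and $\ell\ge0$.

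Once this bound is in place, your remaining steps match the paper: rank monotonicity for the lower bound, letting $s\uparrow1$ at fixed $m$ via Theorem~\ref{thm:critical}, monotonicity of $\sqrt{m!}C_{m,d}$ from \eqref{lr:eq:chain}, the joint remainder, and the Barnes--Glaisher evaluation at $d=0$. One small correction: the last integrand there is $\log\Gamma(3/2-t/2)$, not $\log\Gamma(3/2-t)$.
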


These statements concern approach to the boundary from $s<1$;
they do not assert the original weighted stochastic LIL at $s=1$.

The contributions are as follows.
\begin{enumerate}
\item The finite-rank constant has an exact R\'enyi variational
representation. Bounds on its global maximizing densities give a
quantitative limit to logarithmic energy minus relative entropy.
The Selberg product then evaluates that limit explicitly
(Sections~\ref{sec:variational}--\ref{sec:large-rank}).
\item An interpolation argument gives density bounds uniform as
$s\uparrow1$ for $d\le0$. It yields
Theorem~\ref{thm:boundary-rank}, including the Glaisher--Kinkelin
coefficient and a joint remainder (Section~\ref{sec:critical}).
\item The chaos large-deviation principle identifies $\Lambda_m$
with a logarithmic right-tail constant for the weighted Hermite
limit (Proposition~\ref{prop:tail}). Thus the high-rank formula
also has a distributional interpretation.
\item Explicit finite-rank bounds, fixed-rank Taylor coefficients
and weight concentration are linked to the limiting function
(Sections~\ref{sec:global}--\ref{sec:concentration}). In particular,
the leading rank coefficients of the independent quadratic and cubic
calculations agree with the Taylor coefficients of $\log\Xi_d$.
Detailed special-weight integrations and the full global comparison
calculations are given in the accompanying \emph{Supplementary calculations}.
\end{enumerate}

\subsection{Relation to earlier work}
Mori--Oodaira describe iterated-logarithm endpoints of multiple Wiener
integrals through variational extrema \cite{MO86,MO87}.
Hermite processes arise in noncentral limit theorems for long-memory
Gaussian sequences \cite{Taqqu79}; deterministic integrals with
respect to them are developed in \cite{MaejimaTudor07}, and
\cite{BaiTaqqu14} gives a broader class of Hermite-type processes.
The weighted stochastic input, the rank-two spectral analysis and the
learning applications are given in
\cite{MoldavskayaLIL,MoldavskayaRankTwo,MoldavskayaML}.
The tail identification uses the classical Gaussian-chaos
large-deviation theory of Borell, Ledoux and
Mayer-Wolf--Nualart--P\'erez-Abreu
\cite{Borell78,Ledoux90,MWNP92}.

The attractive logarithmic mean-field argument has predecessors in
the work of Caglioti, Lions, Marchioro and Pulvirenti \cite{CLMP92} and
Kiessling \cite{Kiessling93}. The theorem used here is
Berman's formulation for singular interactions \cite{Berman}.
The Selberg evaluation itself is classical \cite{ForresterWarnaar};
attractive Jacobi integrals and related Barnes formulas already occur
in \cite{FyodorovLeDoussal,RemyZhu,Ostrovsky}.
High-temperature matrix ensembles provide a related mean-field
regime \cite{ABG12,TrinhTrinh21}. For the Jacobi ensembles in
\cite{TrinhTrinh21}, $\beta_NN\to2c$ with $c\ge0$;
our partition function has $\beta_N=-s/N$ and hence $c=-s/2<0$.
The attractive limit follows from Berman's theorem and the Selberg
product. The contribution here is the passage from
weighted iterated-logarithm constants to the explicitly evaluated
free energy, with finite-rank and boundary error control.

\section{Model and deterministic normalization}\label{sec:model}

The probabilistic setting is the following. The sequence $(X_t)$ is
centered, stationary and Gaussian, with $\E X_t^2=1$. Its covariance and
spectral density satisfy
\[
 \rho(h)\sim c h^{-\alpha}L_0(h),\qquad
 f_X(\lambda)\sim c_f|\lambda|^{\alpha-1}L_0(1/|\lambda|),
\]
where $c,c_f>0$ and $L_0$ is eventually positive and slowly varying.
The spectral convention is
$\rho(h)=\int_{-\pi}^{\pi}e^{ih\lambda}f_X(\lambda)\dd\lambda$.
The density is bounded on compact subsets away from zero and belongs
to $W^{1,1}([-\pi,-\delta]\cup[\delta,\pi])$ for every $0<\delta<\pi$.
The centered function $\mathcal G\in L^2(\mathbb R,\phi)$ has expansion
\[
 \mathcal G(x)=\sum_{q=m}^{\infty}\frac{J_q}{q!}H_q(x),\qquad J_m\ne0,
\]
where $\phi$ is standard Gaussian measure. Write
\[
 S_n=\sum_{t=1}^n a_t\mathcal G(X_t),\qquad
 a_t=t^{-d}\ell_a(t),\qquad A_n^2=\Var(S_n),
\]
where $\ell_a$ is eventually positive, slowly varying and eventually
continuously differentiable, with $x\ell_a'(x)/\ell_a(x)\to0$.
We work in the nonlinear parameter region
\begin{equation}\label{quad:eq:region}
 m\geq2,\qquad 0<\alpha<1/m,\qquad 2d+m\alpha<1.
\end{equation}
The convention for the sharp constant is
\begin{equation}\label{quad:eq:lil}
 \limsup_{n\to\infty}
 \frac{|S_n|}{A_n(2\log\log n)^{m/2}}
 =\Lambda_m(\alpha,d)\quad\text{almost surely}.
\end{equation}
These are assumptions (A1)--(A6) of \cite{MoldavskayaLIL}. Both the
covariance and the spectral conditions are imposed. The LIL
\eqref{quad:eq:lil} follows from the nonlinear functional theorem
and scalar corollary of \cite{MoldavskayaLIL}.

Constant positive weights are included in $d=0$, with $\ell_a$ constant.
More generally, $d=0$ also permits nonconstant slowly varying $\ell_a$, subject
to the original hypotheses. Since
\[
 \frac{a_{\lfloor nx\rfloor}}{a_n}\longrightarrow x^{-d},\qquad x>0,
\]
the limiting weight profile is $1$ when $d=0$. The slowly varying factor is
absorbed into $A_n$; hence the same $\Lambda_m(\alpha,0)$ applies.

Write $w(x)=x^{-d}$ and
\begin{align}
 I_{m,\alpha,d}
 &=\int_0^1\int_0^1 w(x)w(y)|x-y|^{-m\alpha}\dd x\dd y\notag\\
 &=\frac{2B(1-d,1-m\alpha)}{2-2d-m\alpha}.\label{quad:eq:I}
\end{align}
The last equality follows by symmetry and $y=xt$ on $0<y<x<1$.
Set
\begin{equation}\label{quad:eq:mu}
 W=\int_0^1w(x)\dd x=\frac1{1-d},\qquad
 \mu(\dd x)=\mu_d(\dd x)=\frac{w(x)}W\dd x,
 \qquad p=\frac m{m-1}.
\end{equation}
Thus $\mu$ is a probability measure and $p$ and $m$ are conjugate exponents.
We also use $\eta=1-d>1/2$ and $s=m\alpha$; thus
$\mu_d(\dd x)=\eta x^{\eta-1}\dd x$ and the region is
$0<s<\min(1,2\eta-1)$. In particular $d$ may be negative without
a lower bound. The notation $s(x)$ used later for a logarithmic row
moment always includes an argument, to distinguish it from $s=m\alpha$.

For completeness the deterministic tensor definition is
\[
 \Lambda_m(\alpha,d)=\sup_{\|\xi\|_{H_{\R}}\le1}
       |\langle Q_1,\xi^{\otimes m}\rangle|,
 \qquad m!\|Q_1\|_2^2=1,
\]
where $H_{\R}=\{\xi\in L^2(\mathbb R;\mathbb C):
\xi(-u)=\overline{\xi(u)}\}$ and, up to an irrelevant overall sign,
\[
 Q_1(u_1,\ldots,u_m)=
 \frac{\displaystyle\int_0^1x^{-d}e^{ix(u_1+\cdots+u_m)}\dd x}
      {\sqrt{m!c_\alpha^m I_{m,\alpha,d}}}
 \prod_{j=1}^m|u_j|^{(\alpha-1)/2},
 \quad c_\alpha=2\Gamma(\alpha)\cos(\pi\alpha/2).
\]
The integral kernel belongs to $L^2$ in the stated region, by the
Riesz energy identity and \eqref{quad:eq:I}. This definition makes all
the deterministic theorems below meaningful independently of the
probabilistic input. At rank one the same normalization gives
$\Lambda_1=1$ exactly.

We use both $Z_m(\alpha,d)$ and $Z_d(s)$ below; they denote the
same quantity when $s=m\alpha$. The letter $\eta$ always denotes
$1-d$. The moment symbols $a,b,c$ introduced in
Section~\ref{sec:local} are local logarithmic moments.

\section{An exact variational reformulation}\label{sec:variational}

Let
\[
 (K_\alpha h)(x)=\int_0^1|x-y|^{-\alpha}h(y)\dd y.
\]
The tensor norm in \cite{MoldavskayaLIL} has the interval representation
\begin{equation}\label{quad:eq:interval}
 \Lambda_m(\alpha,d)=\frac1{\sqrt{m!I_{m,\alpha,d}}}
 \sup_{h\in L^2(0,1),\,h\ne0}
 \frac{\left|\int_0^1 w(x)(K_\alpha h(x))^m\dd x\right|}
 {\ip{h}{K_\alpha h}_{L^2}^{m/2}}.
\end{equation}
To check the normalization, put $c_\alpha=2\Gamma(\alpha)\cos(\pi\alpha/2)$.
Equip the spectral space $H_{\R}$ with its real scalar product
\[
 \langle\xi,\zeta\rangle_{H_{\R}}
 =\operatorname{Re}\int_{\R}\xi(u)\overline{\zeta(u)}\dd u.
\]
The spectral trial functions
\[
 \xi_h(u)=|u|^{(\alpha-1)/2}\int_0^1h(y)e^{iyu}\dd y
\]
satisfy $\norm{\xi_h}^2=c_\alpha\ip{h}{K_\alpha h}$. In the contraction
with the variance-normalized kernel $Q_1$, each coordinate contributes
$c_\alpha K_\alpha h$, whereas $Q_1$ contains the factor
$c_\alpha^{-m/2}/\sqrt{m!I_{m,\alpha,d}}$. Division by
$\norm{\xi_h}^m$ cancels all powers of $c_\alpha$, giving
\eqref{quad:eq:interval}; the sign of the Hermite coefficient disappears under
the absolute value. These identities may first be regularized in frequency
and then extended by the Riesz energy identity. The closed span of these
trial functions is the relevant spectral subspace: an orthogonal component
has zero interval transform, so its tensor contraction vanishes.

Define the real integral operator and the scalar normalization
\begin{align}
 (R_\alpha f)(x)&=\int_0^1|x-y|^{-\alpha}f(y)\mu(\dd y),\label{quad:eq:R}\\
 Z_m(\alpha,d)&=\iint|x-y|^{-m\alpha}\mu(\dd x)\mu(\dd y)
                 =\frac{I_{m,\alpha,d}}{W^2}.\label{quad:eq:Z}
\end{align}
All spaces and pairings below are real. Set
\begin{equation}\label{quad:eq:D}
 D_m(\alpha,d)=\max_{\norm{f}_{L^p(\mu)}=1}
 B_\alpha(f,f),\qquad
 B_\alpha(f,g)=\int fR_\alpha g\dd\mu.
\end{equation}

\begin{proposition}\label{quad:prop:dual}
The maximum in \eqref{quad:eq:D} exists and has a nonnegative maximizer. Moreover,
\begin{equation}\label{quad:eq:exact}
 \boxed{\displaystyle
 \Lambda_m(\alpha,d)=
 \frac{D_m(\alpha,d)^{m/2}}{\sqrt{m!Z_m(\alpha,d)}}.}
\end{equation}
Every nonnegative normalized maximizer satisfies
\begin{equation}\label{quad:eq:Euler}
 R_\alpha f=D_m(\alpha,d)f^{p-1}.
\end{equation}
\end{proposition}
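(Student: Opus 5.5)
The plan is to derive \eqref{quad:eq:exact} from the interval representation \eqref{quad:eq:interval} by a two-step duality. First apply Hölder in the variable $x$ with exponents $(m,p)$. Then apply Cauchy--Schwarz in the energy inner product $\ip{h}{K_\alpha g}$, which is positive semidefinite because $|x|^{-\alpha}$ has nonnegative Fourier transform for $0<\alpha<1$. Existence of a maximizer for $D_m$, its nonnegativity and \eqref{quad:eq:Euler} are treated separately by compactness and a first variation.

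\textbf{Upper bound.} Fix $h\in L^2(0,1)$ and put $u=K_\alpha h$. Since $w=W\,\dd\mu/\dd x$,
\[
\Bigl|\int_0^1 w u^m\dd x\Bigr|\le W\norm{u}_{L^m(\mu)}^m .
\]
By $L^p$--$L^m$ duality, $\norm{u}_{L^m(\mu)}=\sup_{\norm{f}_{L^p(\mu)}\le1}\int fu\dd\mu$. For such $f$ we have $\int f u\dd\mu=W^{-1}\ip{wf}{K_\alpha h}$. Cauchy--Schwarz for the positive form $\ip{\cdot}{K_\alpha\cdot}$ then gives
\[
\ip{wf}{K_\alpha h}^2\le\ip{wf}{K_\alpha(wf)}\,\ip{h}{K_\alpha h}
=W^2B_\alpha(f,f)\,\ip{h}{K_\alpha h}.
\]
Hence $\norm{u}_{L^m(\mu)}^2\le D_m\,\ip{h}{K_\alpha h}$. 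The quotient in \eqref{quad:eq:interval} is therefore at most $W D_m^{m/2}$. Using $I_{m,\alpha,d}=W^2Z_m$, this yields $\Lambda_m\le D_m^{m/2}/\sqrt{m!Z_m}$. The identity $\ip{wf}{K_\alpha(wf)}=W^2B_\alpha(f,f)$ is first checked for bounded $f$, and the positivity of the form for general arguments uses the Riesz energy identity, as in Section~\ref{sec:variational}.

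\textbf{Existence, sign and Euler equation.} The kernel $k(x,y)=|x-y|^{-\alpha}$ satisfies $\norm{k}_{L^m(\mu\otimes\mu)}^m=Z_m<\infty$ in the region \eqref{quad:eq:region}. Since $f\otimes f\in L^p(\mu\otimes\mu)$ with norm $\norm{f}_p^2$, Hölder shows that $B_\alpha$ is bounded. Take a maximizing sequence $f_n$ with $f_n\rightharpoonup f$ weakly in $L^p(\mu)$. Then $f_n\otimes f_n\rightharpoonup f\otimes f$ in $L^p(\mu\otimes\mu)$: the sequence is bounded, and it converges on product test functions, whose span is dense. Hence $B_\alpha(f_n,f_n)\to B_\alpha(f,f)$, with $\norm{f}_p\le1$. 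Since $D_m>0$ (test with $f=1$), homogeneity forces $\norm{f}_p=1$, so $f$ attains the maximum.

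Because $k>0$, we have $B_\alpha(|f|,|f|)\ge B_\alpha(f,f)$, so $|f|$ is also a maximizer and a nonnegative maximizer exists. For a nonnegative maximizer $f$, note that $R_\alpha f>0$ everywhere. Perturb by $f+\varepsilon\varphi$ with $\varphi\ge0$ supported on $\{f=0\}$. The gain in $B_\alpha$ is $2\varepsilon\int\varphi R_\alpha f\dd\mu>0$, while the increase in $\norm{\cdot}_p^p$ is only $O(\varepsilon^p)$ with $p>1$. This contradicts maximality unless $f>0$ almost everywhere. Two-sided variations then give the Lagrange condition $2R_\alpha f=\lambda p f^{p-1}$. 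Pairing with $f$ gives $\lambda p=2D_m$, which is \eqref{quad:eq:Euler}.

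\textbf{Lower bound (equality).} Take $h=wf$ with $f$ a nonnegative maximizer. Then $K_\alpha h=W R_\alpha f=W D_m f^{p-1}$. In this case both the Hölder and the Cauchy--Schwarz steps are equalities, because $u^{m-1}\propto f^{(p-1)(m-1)}=f$ and $h=wf$ is the Cauchy--Schwarz extremal. Hence the quotient equals $W D_m^{m/2}$.

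The main obstacle is that $h=wf$ need not lie in $L^2(0,1)$ when $d>0$. I would handle this by truncation: take $h_N=w\min(f,N)\mathbf 1_{[1/N,1]}$. The numerator and the energy converge by monotone convergence, and the energy stays finite since $\ip{wf}{K_\alpha(wf)}=W^2D_m$. So the supremum in \eqref{quad:eq:interval} is approached, which completes \eqref{quad:eq:exact}.
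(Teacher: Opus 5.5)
Your proof is correct. Its core mechanism is the same as the paper's: H\"older with the pair $(m,p)$, combined with Cauchy--Schwarz for the positive semidefinite Riesz form, with equality forced by the Euler equation. The execution differs in three places.

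First, existence. The paper proves that $R_\alpha:L^p(\mu)\to L^m(\mu)$ is compact by approximating the kernel with finite-rank rectangle kernels. You instead get weak sequential continuity of $f\mapsto B_\alpha(f,f)$ from $f_n\otimes f_n\rightharpoonup f\otimes f$ in $L^p(\mu\otimes\mu)$. This is shorter and equally valid.

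Second, the upper bound. The paper builds the completion $\mathcal H$ of $L^p(\mu)/\mathcal N$ and uses the adjoint $A=J^*$, with $AJ=R_\alpha$ and $\norm{A}^2=\norm{J}^2=D_m$. It then transfers to the interval quotient by density in both directions. Your duality-plus-Cauchy--Schwarz estimate $\norm{K_\alpha h}_{L^m(\mu)}^2\le D_m\ip{h}{K_\alpha h}$ is exactly the concrete form of $\norm{J^*}=\norm{J}$, applied directly to the interval quotient. Restricting to bounded $f$ there is harmless, since $K_\alpha h\in L^\infty$ and $w\in L^2(0,1)$.

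Third, the lower bound. Your truncated test functions $h_N$ replace the paper's density transfer. The truncation is in fact unnecessary. By the Euler equation and the row bound $C_s<\infty$ of Lemma~\ref{lr:lem:density}, $f=(R_\alpha f/D_m)^{m-1}\le (C_s^{1/m}/D_m)^{m-1}$ is bounded. Hence $wf\in L^2(0,1)$ already attains the interval supremum. Your monotone-convergence argument does have the merit of not needing that row bound.

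Your preliminary proof that $f>0$ almost everywhere is also superfluous. Since $t\mapsto|t|^p$ is $C^1$ for $p>1$, the Lagrange condition holds at any maximizer. Afterwards $R_\alpha f\ge\int f\,\dd\mu>0$ forces $f>0$.

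What the paper's abstract route buys beyond the stated proposition is attainment of the supremum by a unit vector of the spectral space $H_{\R}$, together with the energy-space interpretation that follows the proposition. The attainment is what Proposition~\ref{prop:tail} uses to compute the rate function.
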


\begin{proof}
The kernel belongs to $L^m(\mu\otimes\mu)$ by \eqref{quad:eq:region} and
\eqref{quad:eq:I}. Apply H\"older on the product space to the two
factors $|x-y|^{-\alpha}$ and $f\otimes g$, with conjugate exponents $m,p$.
Since $\|f\otimes g\|_{L^p(\mu\otimes\mu)}=\|f\|_p\|g\|_p$, this gives
\[
 |B_\alpha(f,g)|\leq Z_m^{1/m}\norm f_p\norm g_p.
\]
For two kernels $k,\widetilde k\in L^m(\mu\otimes\mu)$, pointwise
H\"older followed by integration in $x$ gives
\[
 \|(R_k-R_{\widetilde k})f\|_m
 \le \|k-\widetilde k\|_{L^m(\mu\otimes\mu)}\|f\|_p.
\]
Finite sums of rectangle indicators are dense in this kernel space
and define finite-rank operators. Hence $R_\alpha:L^p(\mu)\to L^m(\mu)$
is compact. On the weakly compact unit ball of $L^p$,
$B_\alpha(f,f)$ is weakly continuous: if $f_n\rightharpoonup f$, compactness
gives $R_\alpha f_n\to R_\alpha f$ in $L^m$, and
\[
 B_\alpha(f_n,f_n)-B_\alpha(f,f)
 =\int f_n(R_\alpha f_n-R_\alpha f)\dd\mu
   +\int(f_n-f)R_\alpha f\dd\mu\longrightarrow0.
\]
A maximum exists; it is positive,
and homogeneity puts every maximizer on the unit sphere. Replacing $f$ by
$|f|$ cannot decrease the objective. Differentiating the constraint
$\int|f|^p\dd\mu=1$ gives
$2R_\alpha f=\lambda p|f|^{p-2}f$. Pairing with $f$ yields
$\lambda p=2D_m$, hence \eqref{quad:eq:Euler}.

The Riesz kernel is positive definite: for bounded real $f$, this follows
from its nonnegative Fourier density applied to $wf\in L^2(0,1)$
extended by zero, since $d<1/2$;
the assertion extends to $L^p(\mu)$ by density and the preceding bound.
The Cauchy--Schwarz inequality for this positive semidefinite form
makes $B_\alpha$ a scalar product on $L^p(\mu)/\mathcal N$, where
$\mathcal N=\{f:B_\alpha(f,f)=0\}$. Let $\mathcal H$ be its completion
and $J:L^p(\mu)\to\mathcal H$ the canonical map.
Then
\[
 \norm{Jf}_{\mathcal H}^2=B_\alpha(f,f),\quad
 \norm J^2=D_m.
\]
Identify $\mathcal H^*$ with $\mathcal H$ and $(L^p(\mu))^*$ with
$L^m(\mu)$. The Banach adjoint $A=J^*:\mathcal H\to L^m(\mu)$ is
characterized by
\[
 \int f(Au)\dd\mu=\langle Jf,u\rangle_{\mathcal H},
 \qquad f\in L^p(\mu),\quad u\in\mathcal H.
\]
Taking $u=Jg$ shows $AJ=R_\alpha$. Consequently $\norm A^2=D_m$, and
\begin{equation}\label{quad:eq:tensbound}
 \left|\int(Au)^m\dd\mu\right|
 \leq\norm{Au}_m^m\leq D_m^{m/2}\norm u_{\mathcal H}^m.
\end{equation}
For a nonnegative maximizer $f$, take $u=Jf/\sqrt{D_m}$. Its norm is one,
and \eqref{quad:eq:Euler} implies
\[
 Au=\sqrt{D_m}\,f^{p-1}\geq0,\qquad
 \int(Au)^m\dd\mu=D_m^{m/2}\int f^{m(p-1)}\dd\mu=D_m^{m/2}.
\]
Here $m(p-1)=p$. This proves equality in \eqref{quad:eq:tensbound}, including
odd $m$.

Finally, for bounded $g$ put $h=wg$. Then
\[
 K_\alpha h=W R_\alpha g,\qquad
 \ip h{K_\alpha h}=W^2 B_\alpha(g,g).
\]
Substitution into \eqref{quad:eq:interval}, using $I=W^2Z_m$, gives
\[
 \frac1{\sqrt{m!Z_m}}
 \frac{\left|\int(R_\alpha g)^m\dd\mu\right|}{B_\alpha(g,g)^{m/2}}.
\]
Bounded $g$ are dense in $L^p(\mu)$, hence their images under $J$ are
dense in $\mathcal H$. Conversely, for $h\in L^2(0,1)$, truncate $h/w$
to $[-n,n]$ and call the result $g_n$. Since $w>0$ almost everywhere,
$wg_n\to h$ pointwise and $|wg_n|\le|h|$, so convergence holds in $L^2$.
Since $\alpha<1/2$,
$K_\alpha:L^2\to L^\infty$ is bounded; with $w\in L^1$, both the
interval numerator and energy pass to these approximations. Taking
suprema proves \eqref{quad:eq:exact}.

The spectral supremum is attained as well. The map
$Jg\mapsto\xi_{wg}/(W\sqrt{c_\alpha})$, initially defined for
bounded $g$, is an isometry into $H_{\R}$ and extends to $\mathcal H$.
The tensor contraction extends continuously along this isometry.
Thus the unit vector constructed above also gives a spectral unit
vector attaining the positive value $\Lambda_m$.
\end{proof}

\subsection{The energy-space interpretation}
The strictly positive operator $K_\alpha$ on $L^2(0,1)$ defines the
energy space
\[
 \mathcal E_\alpha=\operatorname{Ran}(K_\alpha^{1/2}),\qquad
 \|\psi\|_{\mathcal E_\alpha}=
          \|K_\alpha^{-1/2}\psi\|_2.
\]
It is complete in this norm, although its range need not be closed in
$L^2$. The functions $\psi=K_\alpha h$ are dense in this energy norm.
The Hilbert-space construction in Proposition~\ref{quad:prop:dual}
therefore extends \eqref{quad:eq:interval} to
\[
 \Lambda_m(\alpha,d)=\frac1{\sqrt{m!I_{m,\alpha,d}}}
 \sup_{\|\psi\|_{\mathcal E_\alpha}\le1}
       \left|\int_0^1w(x)\psi(x)^m\dd x\right|.
\]
In particular, it is the $m$th power of the sharp norm of the embedding
$\mathcal E_\alpha\hookrightarrow L^m(w\dd x)$, divided by
$\sqrt{m!I_{m,\alpha,d}}$; the equality for odd $m$ follows from the
positive maximizer in that proposition. For $\psi$ in the operator
domain of $K_\alpha^{-1}$ the squared energy norm is
$\langle K_\alpha^{-1}\psi,\psi\rangle$, with the quadratic-form
interpretation on the full energy space. This is an inverse-potential
energy on an interval.

\subsection{Transfer between Hermite ranks}
\begin{proposition}\label{prop:rank-transfer}
For integers $2\le k\le m$ and admissible $(m,\alpha,d)$,
\[
 \sqrt{m!}\Lambda_m(\alpha,d)
 \le\sqrt{k!}\Lambda_k(m\alpha/k,d).
\]
\end{proposition}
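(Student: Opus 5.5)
The plan is to reduce the statement to the exact formula \eqref{quad:eq:exact} of Proposition~\ref{quad:prop:dual} and then compare the two maximization problems \eqref{quad:eq:D} by a single H\"older inequality. Put $s=m\alpha$ and $\beta=s/k$. Then $k\beta=s<1$ and $2d+k\beta<1$, so $(k,\beta,d)$ is admissible. The normalizations agree: $Z_m(\alpha,d)=Z_k(\beta,d)=Z_d(s)$, since both are $\iint|x-y|^{-s}\mu(\dd x)\mu(\dd y)$. By \eqref{quad:eq:exact}, $\sqrt{m!}\Lambda_m(\alpha,d)=D_m(\alpha,d)^{m/2}/\sqrt{Z_d(s)}$, and similarly at rank $k$. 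The claim is therefore equivalent to
\[
 D_m(\alpha,d)^{m/2}\le D_k(\beta,d)^{k/2},
\]
that is, $D_m(\alpha,d)\le D_k(\beta,d)^{k/m}$. The case $k=m$ is trivial, so assume $k<m$.

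Take a nonnegative maximizer $f$ for $D_m(\alpha,d)$ with $\|f\|_{L^p(\mu)}=1$, $p=m/(m-1)$; it exists by Proposition~\ref{quad:prop:dual}. Put $q=k/(k-1)$, $\theta=p/q=\frac{m(k-1)}{(m-1)k}$ and $g=f^\theta$, so that $\|g\|_{L^q(\mu)}=1$. Write $F=f\otimes f$ on $(0,1)^2$ with measure $\mu\otimes\mu$, and $K=|x-y|^{-s}$. Then $|x-y|^{-\alpha}=K^{1/m}=(K^{1/k})^{k/m}$, and we split
\[
 K^{1/m}F=\bigl(K^{1/k}F^\theta\bigr)^{k/m}\,F^{1-\theta k/m}.
\]
Apply H\"older with the conjugate exponents $m/k$ and $m/(m-k)$. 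The first factor gives $\bigl(\iint K^{1/k}\,g\otimes g\bigr)^{k/m}=B_\beta(g,g)^{k/m}$. For the second factor the resulting power of $F$ is $(1-\theta k/m)\,m/(m-k)$. A short computation shows this equals $m/(m-1)=p$, so the second factor is $\bigl(\iint F^p\bigr)^{(m-k)/m}=\|f\|_p^{p(m-k)/m\cdot 2}=1$.

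Hence
\[
 D_m(\alpha,d)=B_\alpha(f,f)\le B_\beta(g,g)^{k/m}\le D_k(\beta,d)^{k/m}.
\]
Raising to the power $m/2$ and dividing by $\sqrt{Z_d(s)}$ gives the proposition.

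The only delicate point is the choice of the exponent $\theta$. It must satisfy two conditions simultaneously: $g=f^\theta$ must be normalized in $L^q$, and the leftover H\"older factor must reduce exactly to $\|f\|_p^{\,\cdot}=1$. I would verify this exponent bookkeeping first. The other requirement is the nonnegativity of $f$, which is needed for the fractional powers $f^\theta$ to make sense; it is supplied by Proposition~\ref{quad:prop:dual}. Finiteness of all the integrals follows from admissibility, since $K^{1/k}\in L^k(\mu\otimes\mu)$.
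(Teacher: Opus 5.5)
Your proposal is correct and is essentially the paper's proof: the same substitution $g=f^{p/q}$ with $p=m/(m-1)$, $q=k/(k-1)$, and the same factorization of the integrand followed by H\"older with exponents $m/k$ and $m/(m-k)$. This yields $D_m(\alpha,d)\le D_k(m\alpha/k,d)^{k/m}$, and the proof concludes, as the paper's does, from the equality of the two $Z$ normalizations and Proposition~\ref{quad:prop:dual}.
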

\begin{proof}
There is equality for $k=m$. Otherwise put $r=m/k>1$,
$p_j=j/(j-1)$, and $u=f^{p_m/p_k}$ for a nonnegative $f$ with
$\|f\|_{p_m}=1$. Then $\|u\|_{p_k}=1$. The integrand of
$B_\alpha(f,f)$ factors as
\[
 \bigl[|x-y|^{-r\alpha}u(x)u(y)\bigr]^{1/r}
 \bigl[f(x)^{p_m}f(y)^{p_m}\bigr]^{1-1/r}.
\]
H\"older gives $D_m(\alpha,d)\le D_k(r\alpha,d)^{1/r}$.
The two $Z$ factors agree because $k(r\alpha)=m\alpha$.
Proposition~\ref{quad:prop:dual} proves the result.
\end{proof}
In particular any rank-two upper estimate at $\beta=m\alpha/2$
transfers to every higher rank through this comparison inequality.

\subsection{Logarithmic tails of the weighted Hermite limit}
Let $Y_{m,\alpha,d}=I_m(Q_1)$, with $Q_1$ as in
Section~\ref{sec:model} and with its displayed positive normalization.
Thus $\E Y_{m,\alpha,d}^2=m!\|Q_1\|_2^2=1$.
The underlying real isonormal Gaussian process $\mathbb W$ has
Cameron--Martin space identified with $H_{\R}$, equipped with the
real scalar product above. We use the Wiener--It\^o normalization
\[
 I_m(h^{\otimes m})=H_m(\mathbb W(h)),\qquad \|h\|_{H_{\R}}=1,
\]
where $H_m$ is the monic probabilists' Hermite polynomial, defined by
\[
 e^{tx-t^2/2}=\sum_{j=0}^{\infty}H_j(x)\frac{t^j}{j!}.
\]
Consequently $\E[I_m(f)I_m(g)]=m!\langle f,g\rangle$ for symmetric
kernels, and the homogeneous Cameron--Martin skeleton is
$h\mapsto\langle Q_1,h^{\otimes m}\rangle$.
The sign fixes the orientation of the right tail; changing the sign
of the leading Hermite coefficient reflects the limiting law.

\begin{proposition}[Tail interpretation]\label{prop:tail}
For every admissible $(m,\alpha,d)$,
\begin{equation}\label{eq:tail}
 \lim_{t\to\infty}t^{-2/m}\log\Pr\{Y_{m,\alpha,d}>t\}
 =-\frac12\Lambda_m(\alpha,d)^{-2/m}.
\end{equation}
The same limit holds with $|Y_{m,\alpha,d}|>t$.
\end{proposition}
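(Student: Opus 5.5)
The proof applies the large-deviation principle for homogeneous Gaussian chaos of Borell, Ledoux and Mayer-Wolf--Nualart--P\'erez-Abreu \cite{Borell78,Ledoux90,MWNP92}. For $Y=I_m(Q_1)$ with $Q_1$ symmetric in $L^2$, the family $\varepsilon^m Y$ (equivalently $I_m(Q_1)(\varepsilon\,\cdot)$) satisfies an LDP with speed $\varepsilon^{-2}$. Its good rate function is
\[
 J(y)=\inf\Bigl\{\tfrac12\|h\|_{H_\R}^2:\ \langle Q_1,h^{\otimes m}\rangle=y\Bigr\},
\]
where $h\mapsto\langle Q_1,h^{\otimes m}\rangle$ is the homogeneous Cameron--Martin skeleton fixed by the normalization $I_m(h^{\otimes m})=H_m(\mathbb W(h))$; lower-order Hermite terms vanish under the scaling. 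I choose $\varepsilon=t^{-1/m}$, so that $\Pr\{Y>t\}=\Pr\{\varepsilon^mY>1\}$ and the speed $\varepsilon^{-2}$ equals $t^{2/m}$. The LDP then gives
\[
 -\inf_{y>1}J(y)\le\liminf t^{-2/m}\log\Pr\{Y>t\}\le\limsup t^{-2/m}\log\Pr\{Y>t\}\le-\inf_{y\ge1}J(y).
\]

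Next I compute $J$ by homogeneity. If $\langle Q_1,h^{\otimes m}\rangle=y>0$, write $h=r\xi$ with $\|\xi\|=1$ and $r=\|h\|$. Then $y=r^m\langle Q_1,\xi^{\otimes m}\rangle$, so $\langle Q_1,\xi^{\otimes m}\rangle>0$ and $r^2=(y/\langle Q_1,\xi^{\otimes m}\rangle)^{2/m}$. Minimizing over $\xi$ gives
\[
 J(y)=\tfrac12\bigl(y/\Lambda_m^+\bigr)^{2/m},\qquad \Lambda_m^+=\sup_{\|\xi\|\le1}\langle Q_1,\xi^{\otimes m}\rangle .
\]
This function is continuous and increasing on $(0,\infty)$, so the two infima coincide and equal $\frac12(\Lambda_m^+)^{-2/m}$.

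It remains to show $\Lambda_m^+=\Lambda_m$, the supremum of the absolute value. For even $m$ this is automatic, since $\langle Q_1,(-\xi)^{\otimes m}\rangle=\langle Q_1,\xi^{\otimes m}\rangle$; the positive maximizer of Proposition~\ref{quad:prop:dual} then shows the supremum is attained with a positive value. For odd $m$ I use the last paragraph of the proof of Proposition~\ref{quad:prop:dual}. There a unit vector $u=Jf/\sqrt{D_m}$ with $f\ge0$ gives $\int(Au)^m\,d\mu=D_m^{m/2}>0$, and the isometry into $H_\R$ carries it to a spectral unit vector at which the contraction equals $+\Lambda_m$. This holds provided the sign of $Q_1$ matches the positive normalization displayed in Section~\ref{sec:model}, which is exactly how $Y_{m,\alpha,d}$ was fixed. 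In the interval representation the contraction equals a positive multiple of $\int w\,(K_\alpha h)^m$, so the sign is correct. Hence $\Lambda_m^+=\Lambda_m$, which proves \eqref{eq:tail}.

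For $|Y|>t$, the bound $\Pr\{|Y|>t\}\le\Pr\{Y>t\}+\Pr\{-Y>t\}$ reduces the claim to the left tail. The same LDP applied to $-Y=I_m(-Q_1)$ gives rate constant $\frac12(\sup\langle -Q_1,\xi^{\otimes m}\rangle)^{-2/m}$. Since $\sup_\xi\langle -Q_1,\xi^{\otimes m}\rangle\le\Lambda_m$, this exponent is at least the right-tail one. Taking the larger of the two exponentially small terms gives the upper bound, and the lower bound is trivial from $\{Y>t\}\subset\{|Y|>t\}$.

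The main obstacle is the sign bookkeeping for odd $m$. One must check that the isometry $Jg\mapsto\xi_{wg}/(W\sqrt{c_\alpha})$ maps the contraction with $Q_1$ to $+W^{-m}\ldots\int(R_\alpha g)^m\,d\mu$, with no sign coming from complex conjugation conventions in $H_\R$. Beyond that, the only technical point is quoting the chaos LDP in the precise form above, with the rate given by the skeleton at the stated speed.
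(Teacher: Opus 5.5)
Your proposal is correct and follows the paper's route. You use Ledoux's chaos LDP (your $\varepsilon^mY$ at speed $\varepsilon^{-2}$ is the paper's $\varepsilon^{m/2}Y$ at speed $\varepsilon^{-1}$, reparametrized), then homogeneity to obtain $\tfrac12(y/\Lambda_m)^{2/m}$ for $y>0$, and finally the nonnegative maximizer of Proposition~\ref{quad:prop:dual} to identify the one-sided supremum with $\Lambda_m$. Your union bound for $|Y|>t$ is a harmless variant of the paper's direct use of the sets $\{|y|>1\}$ and $\{|y|\ge1\}$.

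One correction: your parity remark is backwards. For odd $m$, $\xi\mapsto-\xi$ reverses the sign of $\langle Q_1,\xi^{\otimes m}\rangle$, so $\Lambda_m^+=\Lambda_m$ holds automatically and no sign bookkeeping is needed. For even $m$, the identity you cite gives nothing. It is exactly in the even case that the orientation of $Q_1$ and the positive maximizer, checked via $\int w(K_\alpha h)^m>0$, are indispensable.
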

\begin{proof}
The Gaussian-chaos large-deviation principle
\cite[unnumbered theorem, p.~4]{Ledoux90} gives the rate function
\[
 I(y)=\frac12\inf\{\|h\|_{H_\R}^2:
                  \langle Q_1,h^{\otimes m}\rangle=y\}
\]
for $\varepsilon^{m/2}I_m(Q_1)$ on $\mathbb R$ with its usual topology,
at speed $\varepsilon^{-1}$. In particular, its upper bound applies
to every closed set, including the unbounded tail sets below.
This is the scalar form of the process results in \cite{MWNP92}.
Proposition~\ref{quad:prop:dual} constructs a unit vector attaining
the positive value $\Lambda_m$; its absolute value is the full
supremum. The skeleton $P(h)=\langle Q_1,h^{\otimes m}\rangle$ is
continuous on $H_{\R}$: telescoping the tensor powers gives
\[
 |P(h)-P(g)|\le m\|Q_1\|\max(\|h\|,\|g\|)^{m-1}\|h-g\|.
\]
For $y>0$, its bound
$|\langle Q_1,h^{\otimes m}\rangle|\le\Lambda_m\|h\|^m$ and a
rescaling of that unit maximizer give
\[
 I(y)=\frac12\left(\frac{y}{\Lambda_m}\right)^{2/m}.
\]
In particular,
\[
 \inf_{y>1}I(y)=\inf_{y\ge1}I(y)
 =\frac12\Lambda_m^{-2/m}.
\]
Apply the lower and upper large-deviation bounds to $(1,\infty)$
and $[1,\infty)$, with $\varepsilon=t^{-2/m}$.
For the absolute tail, the bound by $\Lambda_m\|h\|^m$ and the
positive maximizer likewise give equal infima over $|y|>1$ and
$|y|\ge1$. Applying the same bounds proves the assertion.
\end{proof}

At odd rank the law is symmetric, since $\mathbb W\mapsto-\mathbb W$
preserves its Gaussian law and changes the sign of $I_m(Q_1)$.
At even rank the chosen orientation and the positive maximizer give
the stated right-tail constant.

At $d=0$, $Y_{m,\alpha,0}$ is the variance-one time-one marginal
of the Hermite process with Hurst index $H=1-m\alpha/2$.
For general $d$, it is the standardized deterministic weighted
integral over $[0,1]$ of that process; its energy integral is
$I_{m,\alpha,d}$. This follows directly from the time integral
in $Q_1$ and the deterministic-integral construction in
\cite{MaejimaTudor07}. At rank two, the rate in \eqref{eq:tail}
is $1/(2\Lambda_2)$, agreeing with the exponential-moment
threshold in \cite{MoldavskayaRankTwo}; see also \cite{VT13}.

Writing $\tau_m(s,d)=\tfrac12\Lambda_m(s/m,d)^{-2/m}$,
Theorem~\ref{lr:thm:main} gives the further interpretation
\[
 \log\tau_m(s,d)=\frac{\log(m!)}m-\log2
                 -\frac{2\log\Xi_d(s)}m+O_{s,d}(m^{-2}).
\]
Here the threshold $t\to\infty$ is taken at each fixed $m$.
No joint limit of $t$ and $m$ is asserted.

\section{High-rank limit and its explicit evaluation}\label{sec:large-rank}
\subsection{R\'enyi representation and uniform density bounds}\label{lr:sec:entropy}

Write $p_m=m/(m-1)$ when comparing ranks, and put $\ell(x,y)=-\log|x-y|$.
For a probability density $\rho$ with respect to $\mu_d$, write
\[
 H(\rho)=\int\rho\log\rho\dd\mu_d,
 \qquad D_p(\rho\|\mu_d)=\frac1{p-1}\log\int\rho^p\dd\mu_d,
\]
with $0\log0=0$. Thus $H$ is relative entropy and $D_p$ is
R\'enyi divergence.

\begin{proposition}[Exact R\'enyi representation]\label{lr:prop:renyi}
For the parameters in \eqref{quad:eq:region},
\begin{equation}\label{lr:eq:renyi}
 \frac m2\log D_m(s/m,d)=\sup_\rho J_m(\rho),
\end{equation}
where the supremum is over probability densities in $L^{p_m}(\mu_d)$ and
\begin{equation}\label{lr:eq:Jm}
 J_m(\rho)=\frac m2\log\iint e^{s \ell(x,y)/m}
          \rho(x)\rho(y)\mu_d(\dd x)\mu_d(\dd y)
          -D_{p_m}(\rho\|\mu_d).
\end{equation}
\end{proposition}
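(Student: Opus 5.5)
The identity should be a direct rewriting of the homogeneous quotient defining $D_m$ in \eqref{quad:eq:D}. No new analysis is needed beyond Proposition~\ref{quad:prop:dual}. I would carry out three steps in order.

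\emph{Step 1: identify the terms of $J_m$.} With $\alpha=s/m$ we have $e^{s\ell(x,y)/m}=|x-y|^{-\alpha}$. Hence the double integral in \eqref{lr:eq:Jm} is exactly $B_\alpha(\rho,\rho)$ from \eqref{quad:eq:D}. For the R\'enyi term, $p_m-1=1/(m-1)$, so
\[
 D_{p_m}(\rho\|\mu_d)=(m-1)\log\int\rho^{p_m}\dd\mu_d
 =(m-1)p_m\log\|\rho\|_{L^{p_m}(\mu_d)}=m\log\|\rho\|_{p_m}.
\]
Therefore
\[
 J_m(\rho)=\frac m2\log\frac{B_\alpha(\rho,\rho)}{\|\rho\|_{p_m}^2}.
\]
For this to be meaningful, $B_\alpha(\rho,\rho)$ must be finite and positive. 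Finiteness follows from the H\"older bound $|B_\alpha(f,g)|\le Z_m^{1/m}\|f\|_p\|g\|_p$ in the proof of Proposition~\ref{quad:prop:dual}. Positivity holds because the kernel $|x-y|^{-\alpha}$ is strictly positive and $\rho\ge0$ is not a.e.\ zero, so the integrand is nonnegative and positive on a set of positive $\mu_d\otimes\mu_d$ measure.

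\emph{Step 2: reduce $D_m$ to densities.} By homogeneity of degree two, $D_m=\sup_{f\ne0}B_\alpha(f,f)/\|f\|_p^2$ over $f\in L^p(\mu_d)$. Since $B_\alpha(|f|,|f|)\ge B_\alpha(f,f)$ (the kernel is positive) and $\|\,|f|\,\|_p=\|f\|_p$, the supremum may be restricted to nonnegative $f\ne0$. Each such $f$ has $\int f\dd\mu_d\in(0,\infty)$, because $\mu_d$ is a probability measure and $f\in L^p\subset L^1$. The quotient is invariant under positive scaling, so we may replace $f$ by the probability density $\rho=f/\int f\dd\mu_d$, which lies in $L^{p_m}(\mu_d)$. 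Conversely, every such density is an admissible $f$.

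\emph{Step 3: conclude.} From Steps 1 and 2,
\[
 D_m(s/m,d)=\sup_\rho \frac{B_\alpha(\rho,\rho)}{\|\rho\|_{p_m}^2},
\]
with the supremum over probability densities in $L^{p_m}(\mu_d)$. Applying the increasing map $t\mapsto\frac m2\log t$, and noting that $D_m>0$ by Proposition~\ref{quad:prop:dual}, gives \eqref{lr:eq:renyi}. Moreover, the supremum is attained at the normalized nonnegative maximizer supplied by that proposition.

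The only point needing care is the admissibility bookkeeping: that $\log B_\alpha(\rho,\rho)$ is finite for every admissible $\rho$ (handled in Step 1), and that no sign or normalization constant is lost when passing from the $L^p$ sphere to densities (handled by the scaling invariance in Step 2). I expect no genuine obstacle beyond this.
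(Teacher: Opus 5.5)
Your proposal is correct and follows essentially the same route as the paper: use homogeneity of $B_\alpha(f,f)/\|f\|_{p_m}^2$, reduce to nonnegative $f$ normalized by its $L^1$ norm, and apply the identity $m\log\|\rho\|_{p_m}=(m-1)\log\int\rho^{p_m}\dd\mu_d=D_{p_m}(\rho\|\mu_d)$. Your extra finiteness and positivity checks are sound and only make explicit what the paper leaves implicit; positivity also follows directly from $|x-y|^{-\alpha}\ge1$ on $[0,1]^2$, which gives $B_\alpha(\rho,\rho)\ge1$.
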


\begin{proof}
The quotient $B_\alpha(f,f)/\|f\|_{p_m}^2$ is
homogeneous. Every nonzero nonnegative $f$ can be divided by its
$L^1$ norm to obtain a probability density. Multiplying the logarithm
of the quotient by $m/2$ leaves the norm contribution
\[
 m\log\|\rho\|_{p_m}
 =(m-1)\log\int\rho^{p_m}\dd\mu_d=D_{p_m}(\rho\|\mu_d).
\]
This proves the representation.
\end{proof}

\begin{lemma}[Uniform bounds for maximizing densities]\label{lr:lem:density}
Define
\begin{equation}\label{lr:eq:C}
 C_s=\sup_{x\in[0,1]}\int_0^1|x-y|^{-s}\mu_d(\dd y),
 \qquad M=C_s^2.
\end{equation}
Then $1\le C_s<\infty$. Every maximizing probability density
$\rho_m$ in \eqref{lr:eq:renyi} satisfies
\begin{equation}\label{lr:eq:bounddensity}
 D_{p_m}(\rho_m\|\mu_d)\le\log C_s,
 \qquad \|\rho_m\|_\infty\le C_s^{2(m-1)/m}\le M.
\end{equation}
\end{lemma}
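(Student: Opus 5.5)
The plan is to pass from a maximizing density $\rho_m$ in \eqref{lr:eq:renyi} back to a normalized maximizer of \eqref{quad:eq:D}. Then I combine the Euler equation \eqref{quad:eq:Euler} with a pointwise H\"older bound on $R_\alpha$, where $\alpha=s/m$. By the homogeneity used in the proof of Proposition~\ref{lr:prop:renyi}, $f=\rho_m/\|\rho_m\|_{p_m}$ is a nonnegative maximizer with $\|f\|_{p_m}=1$, and $\rho_m=f/\|f\|_{L^1(\mu_d)}$. Proposition~\ref{quad:prop:dual} then gives $R_\alpha f=D_m f^{p-1}$, with $p-1=1/(m-1)$.

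First, $C_s\ge1$ because $|x-y|^{-s}\ge1$ on $[0,1]^2$ and $\mu_d$ is a probability measure. For finiteness, the density $\eta y^{\eta-1}$ is bounded when $d\le0$, and then $\sup_x\int_0^1|x-y|^{-s}\dd y<\infty$ since $s<1$. When $0<d<1/2$, split the integral at $y=x/2$:
\begin{itemize}
\item on $y<x/2$ the kernel is at most $(x/2)^{-s}$, which is at most $2^s$ times $\int_0^{x}y^{-s}\eta y^{\eta-1}\dd y$ up to constants;
\item on $y\ge x/2$ the density is at most $\eta(x/2)^{\eta-1}$, and $\int|x-y|^{-s}\dd y$ over this range is $O(1)$ uniformly near $x$.
\end{itemize}
Both pieces are uniformly finite because $\int_0^1 y^{\eta-1-s}\dd y<\infty$. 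This uses $s<2\eta-1<\eta$, which holds in the region \eqref{quad:eq:region}.

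Next comes the key pointwise estimate. H\"older with exponents $m,p$ in the $y$-integral gives
\[
 R_\alpha f(x)\le\Bigl(\int_0^1|x-y|^{-s}\mu_d(\dd y)\Bigr)^{1/m}\|f\|_{p}\le C_s^{1/m}.
\]
The constant function $1$ has unit $L^p(\mu_d)$ norm, and the kernel $|x-y|^{-\alpha}$ is at least $1$. Hence $D_m\ge B_\alpha(1,1)\ge1$. The Euler equation then yields $f^{1/(m-1)}\le C_s^{1/m}$, that is, $\|f\|_\infty\le F:=C_s^{(m-1)/m}$.

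To control the $L^1$ norm from below, I use
\[
 1=\int f^p\dd\mu_d\le F^{p-1}\int f\dd\mu_d,
\]
so $\|f\|_1\ge F^{-1/(m-1)}=C_s^{-1/m}$. This gives
\[
 \|\rho_m\|_\infty\le F\,C_s^{1/m}=C_s\le C_s^{2(m-1)/m}\le C_s^2=M,
\]
where the middle inequalities use $C_s\ge1$ and $m\ge2$. For the R\'enyi bound,
\[
 D_{p_m}(\rho_m\|\mu_d)=(m-1)\log\int\rho_m^{p}\dd\mu_d=m\log\frac{\|f\|_p}{\|f\|_1}=-m\log\|f\|_1\le\log C_s.
\]

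I expect the only genuinely delicate point to be the uniform finiteness of $C_s$ for $d>0$, where the measure $\mu_d$ is singular at the origin. The splitting argument above handles it. Everything else is the Euler equation combined with H\"older.
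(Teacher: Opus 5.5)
Your bounds on $\|\rho_m\|_\infty$ and on the R\'enyi divergence are correct. The gap is in the proof that $C_s<\infty$ when $0<d<1/2$: the two-piece split does not work as written. On $\{y\ge x/2\}$ you bound the density by $\eta(x/2)^{\eta-1}$ and $\int_{x/2}^1|x-y|^{-s}\dd y$ by a constant. The product is of order $x^{\eta-1}$, which is unbounded as $x\downarrow0$ because $\eta<1$. So this step gives finiteness at each fixed $x>0$, but not finiteness of the supremum over $x$. The integrability of $y^{\eta-1-s}$ that you invoke never enters this piece.

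The repair is the paper's three-region split:
\begin{itemize}
\item On $y<x/2$, keep your bound $(x/2)^{\eta-s}\le1$.
\item On $x/2\le y\le 2x$, combine the density bound with $\int_{|y-x|\le x}|x-y|^{-s}\dd y\le 2x^{1-s}/(1-s)$. This gives a constant times $x^{\eta-s}$, which is where $s<\eta$, i.e.\ $s+d<1$, is needed.
\item On $y>2x$, use $|x-y|\ge y/2$ to get at most $2^s\eta/(\eta-s)$.
\end{itemize}

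With that repair, your density argument is correct, but it takes a different route from the paper. The paper bounds the R\'enyi divergence first, variationally. H\"older gives $R_\alpha\rho_m\le C_s^{1/m}\|\rho_m\|_{p_m}$, hence $J_m(\rho_m)\le\tfrac12\log C_s-\tfrac12D_{p_m}(\rho_m\|\mu_d)$. Comparison with $J_m(1)\ge0$ then gives $D_{p_m}\le\log C_s$. This is inserted into the $L^1$-normalized Euler equation \eqref{lr:eq:eulerrho}, discarding $N_m\ge1$, which yields $C_s^{2(m-1)/m}$.

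You instead bound the $L^{p}$-normalized maximizer in $L^\infty$ directly from $R_\alpha f=D_m f^{p-1}$ and $D_m\ge1$. A single interpolation, $1=\int f^p\,\dd\mu_d\le\|f\|_\infty^{p-1}\int f\,\dd\mu_d$, then gives both the R\'enyi bound and $\|\rho_m\|_\infty\le C_s$. This avoids the energy--row comparison and sharpens the stated bound for $m>2$. The paper's extra power is the factor $\|f\|_{L^1(\mu_d)}^{2-m}\ge1$ inside $N_m=D_m^{m-1}\|f\|_{L^1(\mu_d)}^{2-m}$, which it throws away. The paper's two-step pattern (variational comparison, then the Euler equation with $N_m\ge1$) is the one it later reuses, with a smaller H\"older exponent, for the $s$-uniform bound in Lemma~\ref{lem:uniform-density}.
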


\begin{proof}
If $d\le0$, the density of $\mu_d$ is bounded, and $s<1$ suffices
for the row bound. If $d>0$, splitting the $y$ integral into
$y<x/2$, $x/2\le y\le2x$, and $y>2x$ shows it is bounded
provided $s+d<1$. This follows from $s+2d<1$.
At $x=0$ the integral is $\eta/(\eta-s)$; the same condition controls
the limit as $x\downarrow0$. The kernel is at least one, so $C_s\ge1$.

By Proposition~\ref{quad:prop:dual}, a maximizing density exists.
Its Euler equation, after $L^1$ normalization, is
\begin{equation}\label{lr:eq:eulerrho}
 \rho_m(x)=\frac{(R_\alpha\rho_m(x))^{m-1}}{N_m},
 \qquad N_m=\int(R_\alpha\rho_m)^{m-1}\dd\mu_d\ge1.
\end{equation}
The final inequality uses $R_\alpha\rho_m\ge1$.
H\"older with exponents $m,p_m$ gives
\[
 R_\alpha\rho_m(x)\le C_s^{1/m}\|\rho_m\|_{p_m}
 =\exp\left\{\frac{\log C_s+D_{p_m}(\rho_m\|\mu_d)}m\right\}.
\]
Since the quadratic energy is at most this row supremum,
\[
 J_m(\rho_m)\le\tfrac12\log C_s
                 -\tfrac12D_{p_m}(\rho_m\|\mu_d).
\]
The trial $\rho=1$ gives $J_m(\rho_m)\ge J_m(1)\ge0$.
This proves the entropy estimate; \eqref{lr:eq:eulerrho} then gives
the asserted uniform density estimate.
\end{proof}

\begin{lemma}[Uniform comparison of the two functionals]\label{lr:lem:error}
For probability densities $0\le\rho\le M$, define
\[
 J_\infty(\rho)=\frac s2\iint \ell(x,y)\rho(x)\rho(y)
                         \mu_d(\dd x)\mu_d(\dd y)-H(\rho).
\]
Set
\[
 A_{s,d,M}=\frac{s^2M^2}{4}
       \iint \ell^2e^{s\ell/2}\dd\mu_d^{\otimes2},
 \qquad B_M=\frac4{e^2}+M^2(\log M)^2.
\]
These constants are finite, and for every $m\ge2$,
\begin{equation}\label{lr:eq:error}
 J_\infty(\rho)-\frac{B_M}{m}
 \le J_m(\rho)\le J_\infty(\rho)+\frac{A_{s,d,M}}m.
\end{equation}
\end{lemma}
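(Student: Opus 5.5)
The proof treats the energy term and the entropy term of $J_m$ separately and compares each with its counterpart in $J_\infty$ using elementary exponential inequalities. The key fact throughout is that $0\le|x-y|\le1$ on $[0,1]^2$, hence $\ell\ge0$. First I check finiteness of the constants. $B_M$ is obviously finite. For $A_{s,d,M}$, note $\ell^2e^{s\ell/2}=|x-y|^{-s/2}\log^2|x-y|$, which is at most a constant times $|x-y|^{-s'}$ for any $s'\in(s/2,s)$. Its integral against $\mu_d^{\otimes2}$ is then finite by the same Beta-integral computation as \eqref{quad:eq:I}, since $2d+s'<1$.

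\emph{Energy term.} Write $X=s\ell$ and let $\E_\rho$ denote integration against $\rho(x)\rho(y)\mu_d(\dd x)\mu_d(\dd y)$, which is a probability measure.
\begin{itemize}
\item Lower bound: Jensen gives $\frac m2\log\E_\rho e^{X/m}\ge\frac12\E_\rho X=\frac s2\iint\ell\rho\rho\dd\mu_d^{\otimes2}$.
\item Upper bound: use $\log y\le y-1$, then $e^u-1-u\le\frac{u^2}2e^u$ for $u=s\ell/m\ge0$, and $e^{s\ell/m}\le e^{s\ell/2}$ for $m\ge2$. This gives
\[
 \frac m2\log\E_\rho e^{X/m}\le\frac12\E_\rho X+\frac{s^2}{4m}\E_\rho\bigl[\ell^2e^{s\ell/2}\bigr].
\]
Bounding $\rho(x)\rho(y)\le M^2$ turns the last term into exactly $A_{s,d,M}/m$.
\end{itemize}

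\emph{Entropy term.} Rényi divergence is nondecreasing in its order and $D_1=H$, so $D_{p_m}(\rho\|\mu_d)\ge H(\rho)$. Combined with the energy upper bound, this yields the right inequality in \eqref{lr:eq:error}.

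For the reverse, I need $D_{p_m}\le H+B_M/m$. Put $\varepsilon=1/(m-1)$, so $p_m=1+\varepsilon$. Using $\log y\le y-1$ again,
\[
 D_{p_m}=\varepsilon^{-1}\log\int\rho^{1+\varepsilon}\dd\mu_d\le\varepsilon^{-1}\int\rho\,(e^{\varepsilon\log\rho}-1)\dd\mu_d.
\]
Expanding $e^u=1+u+(e^u-1-u)$ gives $H(\rho)+\varepsilon^{-1}\int\rho\,(e^u-1-u)\dd\mu_d$ with $u=\varepsilon\log\rho$. I then split according to the sign of $u$.
\begin{itemize}
\item On $\{\rho\le1\}$ we have $u\le0$, so $e^u-1-u\le u^2/2$. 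The contribution is at most $\frac\varepsilon2\rho(\log\rho)^2\le\frac\varepsilon2\cdot\frac4{e^2}$, since $\sup_{0<r\le1}r(\log r)^2=4/e^2$ at $r=e^{-2}$.
\item On $\{1<\rho\le M\}$ we have $0<u$, so $e^u-1-u\le\frac{u^2}2e^u$. Here $e^u=\rho^\varepsilon\le M^\varepsilon\le M$ because $\varepsilon\le1$ and $M\ge1$. The contribution is at most $\frac\varepsilon2M\cdot M(\log M)^2$.
\end{itemize}
Since $\mu_d$ is a probability measure and $\varepsilon/2=1/(2(m-1))\le1/m$ for $m\ge2$, the total correction is at most $B_M/m$.

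Combining this with the Jensen lower bound for the energy term gives the left inequality in \eqref{lr:eq:error}. The only delicate points are the sign split in the entropy step, which is where the $\rho\log^2\rho$ singularity near $0$ must be handled without any exponential loss, and the uniform use of $m\ge2$ to replace $e^{s\ell/m}$ by $e^{s\ell/2}$.
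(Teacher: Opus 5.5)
Your proof is correct. Its structure matches the paper's: the energy comparison via Jensen below and $\log y\le y-1$ with $e^u-1-u\le\frac{u^2}{2}e^u$ above, the bound $D_{p_m}\ge H$, and finiteness of $A_{s,d,M}$. Your finiteness argument is equivalent to the paper's remark that the moment is $Z_d''(s/2)$.

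The genuine difference is the reverse entropy estimate $D_{p_m}\le H+B_M/m$. The paper sets $h(t)=\log\int\rho^{1+t}\dd\mu_d$ and uses $h(0)=0$, $h'(0)=H(\rho)$. It notes that $h''(t)$ is the variance of $\log\rho$ under the tilted density $\rho^{1+t}/\int\rho^{1+t}\dd\mu_d$. Since $\int\rho^{1+t}\dd\mu_d\ge1$, this gives $0\le h''(t)\le\int\rho^{1+t}(\log\rho)^2\dd\mu_d\le B_M$. Taylor's formula at $t=1/(m-1)$ then yields $0\le D_{p_m}-H\le B_M/(2(m-1))$.

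You instead linearize the logarithm once and bound the exponential remainder pointwise, splitting at $\rho=1$. The paper's bound on $\rho^{1+t}(\log\rho)^2$ implicitly uses the same split, and both routes give the same constant $B_M/(2(m-1))$.

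Each route has its advantages:
\begin{itemize}
\item The paper's version gets both directions from one convex function ($h''\ge0$ already gives $D_{p_m}\ge H$). It also retains a variance, which is sharper in principle. The cost is differentiating twice under the integral in the order parameter, plus a separate truncation or continuity remark for zeros of $\rho$.
\item Your version is entirely pointwise and needs no such justification, but it must cite monotonicity of R\'enyi divergence separately.
\end{itemize}

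One small point of hygiene: perform the expansion $e^u=1+u+(e^u-1-u)$ only on $\{\rho>0\}$, where $u=\varepsilon\log\rho$ is finite. On $\{\rho=0\}$, both $\rho(\rho^\varepsilon-1)$ and $\rho\log\rho$ vanish, so nothing is lost.
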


\begin{proof}
The logarithmic moment in $A_{s,d,M}$ is finite: it is the second
derivative of $Z_d(t)$ at $t=s/2$, a point strictly inside its
integrability range. Let expectations below be under the product
probability measure with density $\rho(x)\rho(y)$.
Jensen's inequality and the elementary exponential remainder imply
\[
 0\le\frac m2\log\E e^{s\ell/m}-\frac s2\E\ell
 \le\frac{s^2}{4m}\E[\ell^2e^{s\ell/m}]
 \le\frac{A_{s,d,M}}m.
\]
For the upper estimate, use also $D_{p_m}\ge H$.

For the lower estimate put $h(t)=\log\int\rho^{1+t}\dd\mu_d$,
$0\le t\le1$. Then $h(0)=0$, $h'(0)=H(\rho)$, and $h''(t)$
is the variance of $\log\rho$ under density proportional to
$\rho^{1+t}$. Since $\int\rho^{1+t}\dd\mu_d\ge1$,
\[
 0\le h''(t)\le\int\rho^{1+t}(\log\rho)^2\dd\mu_d\le B_M.
\]
The last bound uses $u(\log u)^2\le4/e^2$ for $0\le u\le1$.
Taylor's formula at $t=1/(m-1)$ yields
\[
 0\le D_{p_m}(\rho\|\mu_d)-H(\rho)
 \le\frac{B_M}{2(m-1)}\le\frac{B_M}{m}.
\]
The argument allows zeros of $\rho$, by continuous extension or
truncation. Combining the estimates proves \eqref{lr:eq:error}.
\end{proof}

\subsection{The entropy limit and the convergence rate}\label{lr:sec:limit}

Define the variational value
\begin{equation}\label{lr:eq:Fvar}
 F^{\mathrm{var}}_d(s)=\sup_{\rho:\,H(\rho)<\infty}J_\infty(\rho),
\end{equation}
where $\rho$ is a probability density with respect to $\mu_d$.
The entropy inequality on the product probability space gives
\begin{equation}\label{lr:eq:entenergy}
 t\iint\ell\rho\otimes\rho\dd\mu_d^{\otimes2}
 \le2H(\rho)+\log Z_d(t),\qquad 0<t<1.
\end{equation}
Thus the energy is finite whenever $H(\rho)$ is finite. Choosing
$s<t<1$ also proves $F^{\mathrm{var}}_d(s)<\infty$.
More precisely, for such $t$,
\[
 -J_\infty(\rho)
 \ge \left(1-\frac{s}{t}\right)H(\rho)
       -\frac{s}{2t}\log Z_d(t).
\]
Thus bounded free energy implies bounded entropy. In the free-energy
formulation below, the functional equals $-J_\infty(\rho)$ on
$\nu=\rho\mu_d$ of finite relative entropy and is assigned $+\infty$
otherwise. This extension is consistent with weak limits: if
finite-entropy measures converge weakly to a measure of infinite
relative entropy, lower semicontinuity of entropy and the displayed
bound force their free energies to tend to $+\infty$.

\begin{proposition}[Attainment and quantitative convergence]\label{lr:prop:limit}
The supremum in \eqref{lr:eq:Fvar} is attained by a density
$\rho_*$ with $0\le\rho_*\le M=C_s^2$. Moreover,
\begin{equation}\label{lr:eq:ratefree}
 F^{\mathrm{var}}_d(s)-\frac{B_M}{m}
 \le\frac m2\log D_m(s/m,d)
 \le F^{\mathrm{var}}_d(s)+\frac{A_{s,d,M}}m.
\end{equation}
Every weak-* limit point in $L^\infty(\mu_d)$ of maximizing
densities $\rho_m$ maximizes \eqref{lr:eq:Fvar}.
\end{proposition}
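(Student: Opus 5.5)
The argument is a sandwich between Proposition~\ref{lr:prop:renyi}, Lemma~\ref{lr:lem:density} and Lemma~\ref{lr:lem:error}, plus a compactness step for attainment.

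\emph{Upper bound in \eqref{lr:eq:ratefree}.} Let $\rho_m$ be a maximizing density for \eqref{lr:eq:renyi}; it exists by Proposition~\ref{quad:prop:dual} after $L^1$ normalization. By Lemma~\ref{lr:lem:density}, $0\le\rho_m\le M$. The right inequality of \eqref{lr:eq:error} then gives
\[
\tfrac m2\log D_m=J_m(\rho_m)\le J_\infty(\rho_m)+\frac{A_{s,d,M}}{m}.
\]
Since $\rho_m$ is bounded, $H(\rho_m)<\infty$, so $J_\infty(\rho_m)\le F^{\mathrm{var}}_d(s)$.

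\emph{Lower bound.} Here I would test the Rényi problem with a bounded near-maximizer of $J_\infty$. Lemma~\ref{lr:lem:error} needs $\rho\le M$, so I first prove attainment by a maximizer with $\rho_*\le M$. Then
\[
\tfrac m2\log D_m\ge J_m(\rho_*)\ge J_\infty(\rho_*)-\frac{B_M}{m}=F^{\mathrm{var}}_d(s)-\frac{B_M}{m}.
\]

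\emph{Attainment with $\rho_*\le M$.} I would obtain it as a limit of the $\rho_m$. They lie in the $L^\infty$ ball of radius $M$, so a subsequence converges weak-* to some $\rho_*$. This limit is a probability density with $0\le\rho_*\le M$, because $\mu_d$ is a probability measure and $1\in L^1$.

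Two semicontinuity facts are needed.
\begin{enumerate}
\item The energy $\iint\ell\,\rho\otimes\rho\,\dd\mu_d^{\otimes2}$ is continuous along weak-* convergence in the $M$-ball. The kernel $\ell$ lies in $L^1(\mu_d^{\otimes 2})$, so $\ell\rho_m(y)$ integrated in $y$ gives a compact operator $L^\infty\to L^1$ (or approximate $\ell$ in $L^1$ by finite sums of rectangles, as in Proposition~\ref{quad:prop:dual}). The argument used there for $B_\alpha$ then applies.
\item Relative entropy $H$ is weak-* lower semicontinuous, by convexity.
\end{enumerate}
Together these make $J_\infty$ upper semicontinuous. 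Using the upper bound already proved, for every finite-entropy $\rho$ we get
\[
J_\infty(\rho_*)\ge\limsup_m J_\infty(\rho_m)\ge\limsup_m\Bigl(\tfrac m2\log D_m-\frac{A_{s,d,M}}{m}\Bigr).
\]

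It remains to show $\liminf_m \tfrac m2\log D_m\ge J_\infty(\rho)$ for arbitrary finite-entropy $\rho$, which need not be bounded.

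\emph{Main obstacle: truncation.} I would handle this by truncation: set $\rho_K=\min(\rho,K)$, renormalized. Lemma~\ref{lr:lem:error} applies with $M$ replaced by $K$, giving $\liminf_m J_m(\rho_K)\ge J_\infty(\rho_K)$. Then I need $J_\infty(\rho_K)\to J_\infty(\rho)$ as $K\to\infty$:
\begin{itemize}
\item the entropy term converges by monotone and dominated convergence, using $\rho\log\rho$ integrable;
\item the energy converges by dominated convergence, since the energy of $\rho$ is finite by \eqref{lr:eq:entenergy}, and $\ell$ is bounded below on $[0,1]^2$ so the negative part causes no trouble.
\end{itemize}
Hence $J_\infty(\rho_*)\ge\sup J_\infty=F^{\mathrm{var}}_d(s)$, so $\rho_*$ attains the supremum and $\rho_*\le M$.

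The same chain, applied to any weak-* limit point, shows every such limit point is a maximizer. This is the final assertion. With $\rho_*$ in hand, the lower bound above closes \eqref{lr:eq:ratefree}.
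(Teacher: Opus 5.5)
Your proposal is correct and follows the paper's proof essentially step by step. Both arguments use weak-* compactness of the $\rho_m$ from Lemma~\ref{lr:lem:density}, continuity of the logarithmic energy on the $M$-ball together with lower semicontinuity of entropy, truncation to bounded densities, and Lemma~\ref{lr:lem:error} applied to $\rho_m$ for the upper bound and to $\rho_*$ as a trial for the lower bound. The only difference is cosmetic: you prove energy continuity by $L^1(\mu_d^{\otimes2})$ approximation of $\ell$ with rectangle sums, whereas the paper truncates $\ell$ to a continuous kernel and controls the remainder uniformly by $M^2\iint(\ell-T)_+\dd\mu_d^{\otimes2}$; both routes work.
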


\begin{proof}
First, bounded probability densities suffice in \eqref{lr:eq:Fvar}.
For any density of finite entropy, normalize $\rho\wedge T$ to
have integral one and let $T\to\infty$. The entropy converges
by truncation, and the energy converges by monotone convergence
before normalization, using its finiteness from \eqref{lr:eq:entenergy}.

Lemma~\ref{lr:lem:density} gives a weak-* convergent subsequence
$\rho_{m_j}\to\rho_*$ in $L^\infty(\mu_d)$, with
$0\le\rho_*\le M$ and integral one. On this bounded set the
logarithmic energy is continuous for this convergence. To verify
this, define $\ell_T=\min(\ell,T)$ off the diagonal and $\ell_T(x,x)=T$;
this is a continuous function on $[0,1]^2$. Weak-* convergence of the
densities gives weak convergence of $\rho_{m_j}\mu_d$ by testing
against continuous functions. Their product measures converge weakly
as well, by uniform approximation with finite sums of product
functions. Hence the truncated energies converge. The omitted
tail is at most
\[
 M^2\iint(\ell-T)_+\dd\mu_d^{\otimes2}\longrightarrow0
\]
uniformly in the sequence. Relative entropy is weakly lower
semicontinuous. Lemma~\ref{lr:lem:error} therefore gives
\[
 \limsup_{j\to\infty}J_{m_j}(\rho_{m_j})\le J_\infty(\rho_*).
\]
For every fixed bounded probability density $\rho$, the same lemma,
with its own finite bound in place of $M$, gives $J_m(\rho)\to
J_\infty(\rho)$. Maximality of $\rho_m$ and the bounded-density
approximation just proved show
\[
 \liminf_{m\to\infty}\sup J_m\ge F^{\mathrm{var}}_d(s).
\]
Consequently $\rho_*$ attains \eqref{lr:eq:Fvar}, and the supremal
values converge. Finally apply Lemma~\ref{lr:lem:error} to $\rho_m$
for the upper bound, and to $\rho_*$ as a trial for the lower
bound. This proves \eqref{lr:eq:ratefree}.
\end{proof}

The compactness argument establishes convergence of the global maxima
and identifies every limit point of maximizing densities.

\subsection{Selberg evaluation of the limiting free energy}\label{lr:sec:Selberg}

For $N\ge2$ consider the partition function
\begin{equation}\label{lr:eq:partition}
 \mathcal Z_N(s,d)=\int_{[0,1]^N}
 \prod_{i<j}|x_i-x_j|^{-s/N}\prod_{i=1}^N\mu_d(\dd x_i).
\end{equation}
The interaction is attractive: configurations with nearby coordinates
receive larger weight. The following use of a known mean-field
theorem is the bridge between the variational limit and this exactly
integrable expression.

\begin{lemma}[Mean-field identification]\label{lr:lem:meanfield}
For fixed admissible $(s,d)$,
\begin{equation}\label{lr:eq:freeenergy}
 F^{\mathrm{var}}_d(s)=\lim_{N\to\infty}\frac1N\log\mathcal Z_N(s,d).
\end{equation}
\end{lemma}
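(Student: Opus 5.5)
We identify the limit through a Laplace principle (large deviations) for the empirical measures of the mean-field Gibbs measure, in the form given by Berman \cite{Berman} for singular interactions at inverse temperature $\beta_N=-s/N$.

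\textbf{Gibbs measure and rate function.} Write the integrand of \eqref{lr:eq:partition} as
\[
 \exp\Bigl\{\frac sN\sum_{i<j}\ell(x_i,x_j)\Bigr\}
 =\exp\Bigl\{\frac{sN}{2}\iint_{\neq}\ell\,\dd L_N^{\otimes2}\Bigr\},
 \qquad L_N=\frac1N\sum_i\delta_{x_i},
\]
with the diagonal removed. The reference measure $\mu_d^{\otimes N}$ satisfies Sanov's theorem at speed $N$ with rate $H(\cdot\|\mu_d)$. The expected Varadhan-type conclusion is
\[
 \frac1N\log\mathcal Z_N\to\sup_\nu\Bigl\{\frac s2\iint\ell\,\dd\nu^{\otimes2}-H(\nu\|\mu_d)\Bigr\}.
\]
This supremum is exactly $F^{\mathrm{var}}_d(s)$ in the free-energy formulation after \eqref{lr:eq:Fvar}.

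\textbf{Lower bound.} This direction is elementary, by Gibbs' variational principle. For any bounded probability density $\rho$, Jensen's inequality under $\rho^{\otimes N}\mu_d^{\otimes N}$ gives
\[
 \frac1N\log\mathcal Z_N\ge \frac{s(N-1)}{2N}\iint\ell\rho\otimes\rho\,\dd\mu_d^{\otimes2}-H(\rho).
\]
Letting $N\to\infty$ and taking the supremum over bounded $\rho$, which suffices by the truncation argument in Proposition~\ref{lr:prop:limit}, yields $\liminf\ge F^{\mathrm{var}}_d(s)$.

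\textbf{Upper bound (main obstacle).} The interaction is unbounded above, and attractive, so large energy is rewarded. I would truncate, writing $\ell=\ell_T+(\ell-\ell_T)$ with $\ell_T$ as in the proof of Proposition~\ref{lr:prop:limit}. The bounded continuous part is handled by Varadhan's lemma with Sanov's theorem.

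The tail is controlled by H\"older's inequality: split the exponent between a factor $\exp\{(1+\epsilon)\frac sN\sum_{i<j}\ell_T\}$ and a factor $\exp\{\frac{C}{N}\sum_{i<j}(\ell-\ell_T)\}$. The second factor needs a bound of the form
\[
 \limsup_N\frac1N\log\int\exp\Bigl\{\frac{t}{N}\sum_{i<j}\ell\Bigr\}\dd\mu_d^{\otimes N}<\infty
 \quad\text{for some } t\in(s,1).
\]
I would get this either from the exact Selberg formula, which the next subsection evaluates anyway, or by a uniform estimate via the entropy inequality \eqref{lr:eq:entenergy} applied to the symmetric $N$-particle density. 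The truncated tail's contribution then vanishes as $T\to\infty$, using $\iint(\ell-T)_+\dd\mu_d^{\otimes2}\to0$ together with the $(1+\epsilon)$ loss, which is allowed since $s<t<1$ leaves room.

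This exponential integrability at a strictly larger parameter $t>s$ is exactly the hypothesis in Berman's theorem. So in practice I would cite \cite{Berman} directly, after checking that $\ell$ is lower semicontinuous, that $\mu_d$ is a probability measure on compact $[0,1]$, and that $Z_d(t)<\infty$ for some $t>s$ (true since $s<\min(1,2\eta-1)$). The conclusion then matches $F^{\mathrm{var}}_d(s)$ by the consistency remark on infinite-entropy limits.
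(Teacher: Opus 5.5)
Your lower bound is the paper's argument: the Gibbs inequality with the product trial $(\rho\mu_d)^{\otimes N}$, followed by the bounded-density approximation.

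\emph{The self-contained upper-bound sketch fails.} H\"older with exponent $1+\epsilon$ on $\exp\{(1+\epsilon)\frac sN\sum_{i<j}\ell_T\}$ puts the conjugate coefficient $C=s(1+\epsilon)/\epsilon$ on the tail $\frac1N\sum_{i<j}(\ell-\ell_T)$. For $C\ge2$ that $N$-particle integral is infinite, because of total collapse of the $N$ points: on the set where all points lie in an interval of length $r\le e^{-T}$, the integrand is at least $(e^{-T}/r)^{C(N-1)/2}$, while the set has measure of order $r^{N-1}$. Requiring $C\le t<1$, as you propose, forces $\epsilon\ge s/(t-s)$. The main factor then yields only $\sup_\nu\{\frac s2\iint\ell_T\,\dd\nu^{\otimes2}-(1+\epsilon)^{-1}H(\nu)\}$, and this entropy loss does not vanish as $T\to\infty$. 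So the room $t>s$ does not make the loss negligible. The two-particle fact $\iint(\ell-T)_+\,\dd\mu_d^{\otimes2}\to0$ cannot compensate. For an attractive singular interaction, the sharp upper bound is exactly the nontrivial content of Berman's theorem. That is why the paper cites it rather than truncating.

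\emph{The citation checks the wrong hypothesis.} Citing Berman is the paper's route, but $Z_d(t)<\infty$ is not what the paper verifies. The paper invokes Berman's pair-interaction theorem with the ordered-pair Hamiltonian $H^{(N)}=N^{-1}\sum_{i\ne j}\ell(x_i,x_j)$, so $\beta=-s/2$. That theorem needs two things:
\begin{itemize}
\item a \emph{uniform row} bound $\sup_x\int e^{-\beta_0\ell(x,y)}\mu_d(\dd y)<\infty$ for some $\beta_0<\beta$, that is, $\sup_x\int|x-y|^{-q}\mu_d(\dd y)<\infty$ for some $q$ with $s/2<q<\min(1,\eta)$;
\item the mean-energy hypothesis (H1), which here is the identity $\int N^{-1}H^{(N)}\dd\nu^{\otimes N}=\frac{N-1}N\iint\ell\,\dd\nu^{\otimes2}$ together with $E(\mu_d)<\infty$.
\end{itemize}
The double integral $Z_d(t)<\infty$ is a different condition. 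In your normalization the row condition sits at exponent $t/2$, not $t$, and it must hold uniformly in $x$. For $d>0$ the row integral at $x=0$ equals $\eta/(\eta-q)$, which imposes $q<\eta$. Finiteness of $Z_d(t)$, which holds for all $t<1$, does not see this constraint. Replace your checklist with these two verifications and drop the truncation sketch; the proof is then complete along the paper's lines.
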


\begin{proof}
The lower bound is elementary. For any bounded probability density
$\rho$, the Gibbs variational inequality applied to
$(\rho\mu_d)^{\otimes N}$ gives
\[
 \frac1N\log\mathcal Z_N(s,d)
 \ge\frac{s(N-1)}{2N}\iint\ell\rho\otimes\rho\dd\mu_d^{\otimes2}
       -H(\rho).
\]
Let $N\to\infty$ and take the supremum; the bounded-density
approximation in Proposition~\ref{lr:prop:limit} gives
$\liminf N^{-1}\log\mathcal Z_N\ge F_d^{\mathrm{var}}(s)$.
For the reverse inequality, apply Berman~\cite[Theorem 1.5 and its proof in Section 2.7]{Berman}
on the compact metric space $[0,1]$, with reference measure $\mu_d$,
interaction $W(x,y)= \ell(x,y)$, and inverse temperature $\beta=-s/2$.
His ordered-pair Hamiltonian is
$H^{(N)}=N^{-1}\sum_{i\ne j}W(x_i,x_j)$; hence
$e^{-\beta H^{(N)}}$ is exactly the integrand in \eqref{lr:eq:partition}.
Choose $q$ with $s/2<q<\min(1,\eta)$, possible because
$s/2<1/2<\eta$, and put $\beta_0=-q$; thus
$\beta>\beta_0$. Then
\[
 \sup_x\int e^{-\beta_0 W(x,y)}\mu_d(\dd y)
 =\sup_x\int|x-y|^{-q}\mu_d(\dd y)<\infty.
\]
For $\eta\ge1$ this follows from boundedness of the reference density
and $q<1$. For $\eta<1$, splitting at $x/2$ and $2x$ gives a uniform
bound because $q<\eta$ and $q<1$; at $x=0$ the integral is
$\eta/(\eta-q)$. Since $0\le W\le q^{-1}e^{qW}$, this also implies
$W\in L^1(\mu_d^{\otimes2})$.
The interaction is symmetric and lower semicontinuous, with value
$+\infty$ on the diagonal, which has zero product measure. The energy
per particle satisfies
\[
 \int\frac{H^{(N)}}N\dd\nu^{\otimes N}
 =\frac{N-1}N\iint W\dd\nu^{\otimes2}
\]
whenever the right-hand integral is finite. This verifies the
mean-energy assumption (H1), with $E(\nu)=\iint W\dd\nu^{\otimes2}$
and $E(\mu_d)<\infty$. The specialized pair-interaction theorem applies
to a symmetric lower semicontinuous interaction on a compact metric
space with the displayed uniform exponential bound and
$\beta>\beta_0$; its reference probability measure is unrestricted.
Here $\mu_0=\mu_d$ and the inverse temperatures are constant in $N$.
The unnormalized free-energy convergence
proved there, with the finite-entropy domain and extension specified
above, gives
\[
 -\lim_{N\to\infty}\frac1N\log\mathcal Z_N(s,d)
 =\inf_{\rho:\,H(\rho)<\infty}
 \left\{H(\rho)-\frac{s}{2}
        \iint\ell\rho\otimes\rho\dd\mu_d^{\otimes2}\right\}
 =-F_d^{\mathrm{var}}(s).
\]
This proves \eqref{lr:eq:freeenergy}.
\end{proof}

The scaling $\beta=-s/2$ places the partition function in the
attractive regime. The Gibbs inequality supplies the lower bound on
the limiting log-partition function; Berman supplies the reverse bound.
Their equality, followed by rank monotonicity, yields the finite-rank
lower estimate in Corollary~\ref{lr:cor:finite}.

\begin{proposition}[Gamma and Barnes evaluations]\label{lr:prop:Barnes}
The variational value $F^{\mathrm{var}}_d(s)$ equals $\Fm(s)$ in
\eqref{lr:eq:Fgamma}. Let $\omega=s/2$, and for $z>0$ put
\[
 \mathfrak g(z)=(z-1)\log\Gamma(z)-\log G(z),\qquad\varphi(z)=z\log z,
\]
where $G$ is the Barnes $G$ function, normalized by
$G(1)=1$ and $G(z+1)=\Gamma(z)G(z)$. Then
\begin{equation}\label{lr:eq:Barnes}
 \begin{split}
 \Fm(s)={}&\log(2\pi \eta)-\frac2\omega \mathfrak g(1-\omega)\\
 &+\frac1\omega\bigl[\mathfrak g(\eta)-2\mathfrak g(\eta-\omega)+\mathfrak g(\eta-2\omega)
                    +\varphi(\eta-2\omega)-\varphi(\eta-\omega)\bigr].
 \end{split}
\end{equation}
All arguments in this formula are positive in \eqref{quad:eq:region}.
\end{proposition}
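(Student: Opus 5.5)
By Lemma~\ref{lr:lem:meanfield}, $F^{\mathrm{var}}_d(s)=\lim_N N^{-1}\log\mathcal Z_N(s,d)$. The proof therefore reduces to evaluating $\mathcal Z_N$ exactly and taking the limit.

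\textbf{Step 1: Selberg product.} With $\mu_d(\dd x)=\eta x^{\eta-1}\dd x$, the integral $\mathcal Z_N$ is $\eta^N$ times a Selberg integral with parameters $a=\eta$, $b=1$ and $\gamma=-\omega/N$, where $\omega=s/2$ and $\prod_{i<j}|x_i-x_j|^{2\gamma}=\prod_{i<j}|x_i-x_j|^{-s/N}$. The Selberg formula gives
\[
 \mathcal Z_N=\eta^N\prod_{j=0}^{N-1}
 \frac{\Gamma(\eta+j\gamma)\Gamma(1+j\gamma)\Gamma(1+(j+1)\gamma)}
 {\Gamma(\eta+1+(N+j-1)\gamma)\Gamma(1+\gamma)}.
\]
The formula holds for negative $\gamma$ as long as all Gamma arguments stay positive and the integral converges. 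Here $j\gamma\in(-\omega,0]$ and $(N+j-1)\gamma>-2\omega$, so we need $\eta-2\omega>0$, $1-\omega>0$ and $\eta+1-2\omega>0$. These follow from $s<1$ and $s<2\eta-1$. Convergence of the integral also holds in this range; the cleanest justification is analytic continuation in $\gamma$ from $\gamma\ge0$. Both sides are analytic in $\gamma$ on the domain where all Gamma arguments have positive real part, and the integral converges locally uniformly there, since the pairwise singularities $|x_i-x_j|^{-s/N}$ are integrable on the compact cube.

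\textbf{Step 2: Riemann sums.} Take $N^{-1}\log$ of the product. Writing $t=j/N$, we have $j\gamma=-\omega t$ and $(N+j-1)\gamma\to-\omega(1+t)$. The three terms become Riemann sums of $\log\Gamma(\eta-\omega t)$, $2\log\Gamma(1-\omega t)$ and $-\log\Gamma(\eta+1-\omega(1+t))$, with $O(1/N)$ errors because $\log\Gamma$ is smooth on a compact interval of positive arguments. The factor $\Gamma(1+\gamma)^{-N}$ contributes $-\log\Gamma(1-\omega/N)\to0$. Together with $\log\eta$ this gives exactly $\Fm(s)$ in \eqref{lr:eq:Fgamma}, which proves $F^{\mathrm{var}}_d=\Fm$.

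\textbf{Step 3: Barnes form.} Integrate $\log\Gamma$ along each linear argument. The key identity is
\[
 \int_0^z\log\Gamma(u)\dd u=\frac{z(1-z)}2+\frac z2\log(2\pi)+z\log\Gamma(z)-\log G(1+z).
\]
Using $\log G(1+z)=\log G(z)+\log\Gamma(z)$, this becomes $\frac{z(1-z)}2+\frac z2\log2\pi+\mathfrak g(z)$. For $\int_0^1\log\Gamma(c-\omega t)\dd t=\omega^{-1}\int_{c-\omega}^{c}\log\Gamma$, the primitive differences produce the combination $\omega^{-1}[\mathfrak g(c)-\mathfrak g(c-\omega)]$ plus polynomial and $\log2\pi$ terms. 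The third integral has argument $\eta+1-\omega-\omega t$, ranging over $[\eta+1-2\omega,\eta+1-\omega]$. Shifting by one via $\log\Gamma(u+1)=\log\Gamma(u)+\log u$ reduces it to the range $[\eta-2\omega,\eta-\omega]$ plus $\int\log u\dd u$. That last integral is the source of the $\varphi$ terms, since $\int\log u=\varphi(u)-u$.

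\textbf{Step 4: Collect terms.} Summing the contributions, the $\mathfrak g(\eta-\omega)$ terms from the first and third integrals combine to $-2\mathfrak g(\eta-\omega)$. The $\mathfrak g(1)$ term vanishes because $G(1)=\Gamma(1)=1$. The $\log2\pi$ coefficients sum to one, and the quadratic polynomial pieces cancel. Positivity of the arguments $1-\omega$, $\eta-\omega$ and $\eta-2\omega$ was already verified in Step 1.

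The main obstacle is Step 1, justifying the Selberg formula at negative $\gamma$. The Riemann-sum limit is routine, and the final collection is bookkeeping in which the polynomial cancellation must be checked carefully.
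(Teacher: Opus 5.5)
Your route is the paper's: the Selberg product with $(a,b,\gamma)=(\eta,1,-\omega/N)$, the Riemann-sum limit combined with Lemma~\ref{lr:lem:meanfield}, and the Barnes primitive. Your identity $\int_0^z\log\Gamma=\tfrac{z(1-z)}2+\tfrac z2\log(2\pi)+\mathfrak g(z)$ is exactly the paper's $P(z)$. Shifting the third integral by one before integrating, instead of applying $\mathfrak g(z+1)-\mathfrak g(z)=\varphi(z)$ afterwards, is equivalent bookkeeping. One correction to Step~4: in your version the $z(1-z)/2$ pieces contribute $-1$ in total, and the $-u$ from $\int\log u$ contributes $+1$. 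So it is the combined polynomial part that cancels, not the quadratic pieces alone.

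The step that fails as written is the convergence argument in Step~1, which you yourself flag as the main obstacle. Integrability of each factor $|x_i-x_j|^{2\gamma}$ does not imply integrability of their product. Each factor is integrable for $\gamma>-1/2$, yet the Selberg integral diverges for $\gamma\le-1/N$, because of simultaneous collisions of all $N$ coordinates. Hence ``the pairwise singularities are integrable'' does not give the half-plane of absolute, locally uniform convergence that your analytic continuation needs.

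There are two easy repairs.
\begin{itemize}
\item Quote the standard convergence range $\Re\gamma>-\min\{1/N,\Re a/(N-1),\Re b/(N-1)\}$, on which the Selberg formula holds directly. This is what the paper does; the condition is met because $\omega<1$ and $\omega<\eta$.
\item Prove finiteness directly. Write $\prod_{i<j}|x_i-x_j|^{-s/N}=\prod_i\prod_{j\ne i}|x_i-x_j|^{-s/(2N)}$ and apply H\"older with $N$ factors in $L^N$. This gives
\[
 \mathcal Z_N(s,d)\le\Bigl(\sup_x\int_0^1|x-y|^{-s/2}\mu_d(\dd y)\Bigr)^{N-1}<\infty .
\]
The supremum is finite because $s/2<1$ and $s/2+d<1$, both with a margin under $2d+s<1$. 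So the same bound with $s$ slightly enlarged gives absolute, locally uniform convergence on a half-plane containing $-\omega/N$. That is exactly what the identity theorem requires.
\end{itemize}
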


\begin{proof}
By the Selberg integral evaluation \cite{ForresterWarnaar},
\begin{equation}\label{lr:eq:Selbergproduct}
 \mathcal Z_N(s,d)=\eta^N\prod_{j=0}^{N-1}
 \frac{\Gamma(\eta-j\omega/N)\Gamma(1-j\omega/N)
       \Gamma(1-(j+1)\omega/N)}
 {\Gamma(\eta+1-(N+j-1)\omega/N)\Gamma(1-\omega/N)}.
\end{equation}
In the standard Selberg product, the numerator factors are
$\Gamma(a+jc)\Gamma(b+jc)\Gamma(1+(j+1)c)$ and the denominator is
$\Gamma(a+b+(N+j-1)c)\Gamma(1+c)$. Substituting
$(a,b,c)=(\eta,1,-\omega/N)$ gives \eqref{lr:eq:Selbergproduct}.
Its integrability
conditions hold: $\omega<1$ and $\omega<\eta$, so
$-\omega/N> -\min(1/N,\eta/(N-1),1/(N-1))$.
After taking logarithms and dividing by $N$, the sums in
\eqref{lr:eq:Selbergproduct} are Riemann sums of smooth functions
on $[0,1]$. Their gamma arguments are bounded away from zero
for fixed admissible $(s,d)$. The limit is \eqref{lr:eq:Fgamma}.
Lemma~\ref{lr:lem:meanfield} identifies it with the variational value.

For the second formula, the standard Barnes integral identity
\cite[Section 5.17]{DLMF} gives
\[
 \mathfrak g'(z)=\log\Gamma(z)+z-\frac12-\frac12\log(2\pi).
\]
Thus a primitive of $\log\Gamma(z)$ is
\[
 P(z)=\mathfrak g(z)-\frac{z^2}{2}+\frac z2+\frac z2\log(2\pi).
\]
The three integrals in \eqref{lr:eq:Fgamma} become
\[
 \frac{P(\eta)-P(\eta-\omega)}\omega,
 \quad\frac{2(P(1)-P(1-\omega))}\omega,
 \quad-\frac{P(\eta+1-\omega)-P(\eta+1-2\omega)}\omega.
\]
Use $\mathfrak g(1)=0$ and
$\mathfrak g(z+1)-\mathfrak g(z)=z\log z=\varphi(z)$, and collect the elementary
polynomial terms. This yields \eqref{lr:eq:Barnes}.
\end{proof}

The gamma integral also gives the continuous extension
$\mathcal F_d(0)=0$ and $Z_d(0)=1$, hence $\Xi_d(0)=1$.
Differentiation at zero gives
\[
 \mathcal F_d'(0)=\frac12\left.\frac{\dd}{\dd s}\log Z_d(s)\right|_{s=0},
 \qquad \left.\frac{\dd}{\dd s}\log\Xi_d(s)\right|_{s=0}=0.
\]
The first nonzero Taylor terms are evaluated in
Section~\ref{sec:local}.

\begin{proof}[Proof of Theorem~\ref{lr:thm:main} and Corollary~\ref{lr:cor:finite}]
Combining \eqref{quad:eq:exact} and Proposition~\ref{lr:prop:limit}
gives convergence to $\exp\{F_d^{\mathrm{var}}(s)\}/\sqrt{Z_d(s)}$.
Proposition~\ref{lr:prop:Barnes} identifies this limit with $\Xi_d(s)$.
The upper estimate in \eqref{lr:eq:ratefree} gives the logarithmic
error bound with $K_{s,d}=A_{s,d,C_s^2}$. Finally,
Proposition~\ref{prop:rank-transfer} makes the sequence nonincreasing,
so convergence is from above and the logarithmic error is nonnegative.
The explicit expression is positive. Taking the limit in the rank
comparison with rank two and using Theorem~\ref{thm:finite-bounds}
gives $\Xi_d(s)<1$. The same rank comparison yields
\eqref{lr:eq:finitebound} and \eqref{lr:eq:chain}.
\end{proof}

\begin{remark}[Relation to interval chaos moments]
R\'emy and Zhu~\cite[Eq.~(1.8)]{RemyZhu} denote the weighted
moments of interval Gaussian multiplicative chaos by $M$.
In their notation,
\[
 \mathcal Z_N(s,d)=\eta^N
 M\left(\sqrt{2s/N},N,-d,0\right).
\]
This identifies the partition function with their weighted moment
integral. Its connection to the high-rank constant follows through
the passage from the R\'enyi optimization \eqref{lr:eq:renyi}
to the entropy variational problem \eqref{lr:eq:Fvar}.
\end{remark}

\section{Behavior at the memory boundary}\label{sec:critical}
The explicit lower bound in \cite{MoldavskayaLIL} has square-root
decay as $m\alpha\uparrow1$ for fixed $d\le0$. The variational
reformulation establishes the same order for $\Lambda_m$ and
identifies a positive boundary coefficient.

\begin{theorem}\label{thm:critical}
Fix $m\ge2$ and $d\le0$, and set $\alpha_*=1/m$ and $\varepsilon=1-m\alpha$.
On the probability space with measure $\mu_d$, define
\[
 D_*:=\sup_{\|f\|_{L^p(\mu_d)}=1}
 \iint |x-y|^{-1/m} f(x)f(y)\,\mu_d(\dd x)\mu_d(\dd y),
 \qquad p=\frac{m}{m-1}.
\]
Then $0<D_*<\infty$, and as $\alpha\uparrow1/m$ through the admissible region,
\[
 \Lambda_m(\alpha,d)=C_{m,d}\sqrt{1-m\alpha}\,
             \bigl[1+O(1-m\alpha)\bigr],\qquad
 C_{m,d}:=\frac{D_*^{m/2}}{1-d}
               \sqrt{\frac{1-2d}{2m!}}>0.
\]
The constants implicit in the remainder may depend on $m,d$.
\end{theorem}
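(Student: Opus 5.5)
Since $\Lambda_m(\alpha,d)=D_m(\alpha,d)^{m/2}/\sqrt{m!Z_m(\alpha,d)}$ by Proposition~\ref{quad:prop:dual}, the theorem follows from two separate facts. First, the normalization $Z_m$ blows up like $1/\varepsilon$ as $\varepsilon=1-m\alpha\downarrow0$. Second, $D_m(\alpha,d)$ converges to $D_*$ with an $O(\varepsilon)$ error. For the first, \eqref{quad:eq:I} and \eqref{quad:eq:Z} give
\[
 Z_m=\frac{2\eta^2B(\eta,\varepsilon)}{2\eta-1+\varepsilon}.
\]
Since $B(\eta,\varepsilon)=\Gamma(\eta)\Gamma(\varepsilon)/\Gamma(\eta+\varepsilon)=\varepsilon^{-1}[1+O(\varepsilon)]$ and $2\eta-1=1-2d>0$ for $d\le0$, we get
\[
 Z_m=\frac{2\eta^2}{(1-2d)\varepsilon}\,[1+O(\varepsilon)].
\]
Hence $1/\sqrt{m!Z_m}=\eta^{-1}\sqrt{(1-2d)/(2m!)}\,\sqrt\varepsilon\,[1+O(\varepsilon)]$, which produces exactly the stated prefactor of $C_{m,d}$.

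Next I show $0<D_*<\infty$ and that $D_*$ is attained. The kernel $|x-y|^{-\alpha}$ with $\alpha\le1/m$ lies in $L^m(\mu_d\otimes\mu_d)$ when $\alpha=1/m$ exactly, because its $m$th power is $|x-y|^{-1}$. That is \emph{not} integrable, so the Hölder/compactness argument of Proposition~\ref{quad:prop:dual} breaks down at the endpoint. I would therefore use a sharper bound. Since $d\le0$, the density of $\mu_d$ is bounded by $\eta$, and the Hardy--Littlewood--Sobolev inequality on $[0,1]$ applies: with kernel exponent $\lambda=1/m$ on $\R$, the bilinear form is bounded on $L^r\times L^r$ with $2/r+1/m=2$, i.e. $r=2m/(2m-1)<p$. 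Since $[0,1]$ has finite measure, $L^p\subset L^r$ strictly, and the gap between the exponents gives compactness of $R_{1/m}:L^p(\mu_d)\to L^m(\mu_d)$. Indeed, the row bound $\sup_x\int|x-y|^{-1/m}\dd y<\infty$ together with interpolation (Schur test) shows $R_{1/m}$ maps $L^p\to L^{q}$ for some $q>m$, and also into $L^\infty\to L^\infty$; truncating the kernel then gives compactness. Existence of a nonnegative maximizer and the Euler equation then follow verbatim. Positivity of $D_*$ is trivial from $f=1$.

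The main step is the rate $D_m(\alpha,d)=D_*[1+O(\varepsilon)]$. Write $\alpha=(1-\varepsilon)/m$ and let $t=\varepsilon/m$ be the exponent difference, so that $|x-y|^{-\alpha}=|x-y|^{-1/m}|x-y|^{t}$. For the upper bound, $|x-y|^{t}\le1$ gives $D_m(\alpha,d)\le D_*$ directly. For the lower bound, take the maximizer $f_*$ of $D_*$ as a trial function. Then
\[
 D_*-D_m\le\iint|x-y|^{-1/m}\bigl(1-|x-y|^{t}\bigr)f_*(x)f_*(y)\dd\mu_d^{\otimes2}
 \le t\iint|x-y|^{-1/m}\ell(x,y)f_*(x)f_*(y)\dd\mu_d^{\otimes2},
\]
using $1-e^{-tu}\le tu$. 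It therefore suffices that this logarithmic moment be finite. That holds if $f_*\in L^\infty$, because $|x-y|^{-1/m}\ell$ is integrable. Boundedness of $f_*$ comes from the Euler equation $R_{1/m}f_*=D_*f_*^{p-1}$. The row bound gives $R_{1/m}f\in L^\infty$ once $f\in L^{r'}$ for suitable $r'$. Starting from $f_*\in L^p$, I would bootstrap: the integrability gain of $R_{1/m}$ from $L^p$ to a better $L^q$, iterated finitely often, reaches $L^\infty$. This bootstrap, together with compactness at the endpoint, is the step I expect to be the main obstacle, because the kernel sits exactly at the critical exponent for the Hölder argument.

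Combining the two estimates, $D_m^{m/2}=D_*^{m/2}[1+O(\varepsilon)]$. Multiplying by the asymptotics of $1/\sqrt{m!Z_m}$ gives $\Lambda_m=C_{m,d}\sqrt\varepsilon\,[1+O(\varepsilon)]$ with $C_{m,d}>0$.
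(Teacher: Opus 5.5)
Your proof is correct, but it takes a longer route than the paper's at the key step. The paper never uses a maximizer at the endpoint. Writing $\rho(x)=(1-d)x^{-d}\le1-d$ and $u=\rho^{1/p}f$, it applies Young's convolution inequality with kernel exponent $r=m/2$ (so $1+1/m=1/p+1/r$) to the difference of the truncated kernels $k_\alpha(t)=|t|^{-\alpha}\mathbf 1_{\{|t|<1\}}$. This gives $|B_\alpha(f,g)-B_{\alpha_*}(f,g)|\le(1-d)^{2/m}\|k_\alpha-k_{\alpha_*}\|_{L^{m/2}}\|f\|_p\|g\|_p$ with $\|k_\alpha-k_{\alpha_*}\|_{L^{m/2}}=O(|\alpha-\alpha_*|)$. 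Comparing suprema over the common unit sphere then yields $D_m(\alpha,d)=D_*+O(\varepsilon)$ directly, and the same Young bound gives $D_*<\infty$.

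The observation you missed is that $\alpha_*=1/m$ is critical only for H\"older on the product space, which needs the kernel in $L^m$. Young's inequality needs it only in $L^{m/2}$, where $|t|^{-1/m}$ has room to spare. In particular, the logarithmic moment $\iint|x-y|^{-1/m}\ell(x,y)f_*(x)f_*(y)\,\mu_d(\dd x)\mu_d(\dd y)$ in your lower bound is finite for every $f_*\in L^p(\mu_d)$. Indeed $|t|^{-1/m}\log(1/|t|)\mathbf 1_{\{|t|<1\}}\in L^{m/2}(\R)$; equivalently, it is dominated by $C_\delta|t|^{-1/m-\delta}$, and your HLS bound still applies for $\delta\le1/m$. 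So the bootstrap you flagged as the main obstacle is unnecessary.

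The bootstrap does work if you want it. Young with the kernel in $L^{r}$, $m/2<r<m$, maps $L^p$ into $L^q$ for some $q>m$. The Euler equation then gives $f_*\in L^{q/(m-1)}$ with $q/(m-1)>p$. H\"older against the row integral then gives $R_{1/m}f_*\in L^\infty$, hence $f_*\in L^\infty$.

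What your route buys, in exchange for compactness at the endpoint, attainment, the Euler equation and regularity, is twofold. You get the exact one-sided inequality $D_m(\alpha,d)\le D_*$ for every admissible $\alpha$, from $|x-y|^{\varepsilon/m}\le1$, and you get a bounded maximizer at the endpoint. The paper's argument is shorter and gives the two-sided Lipschitz estimate in $\alpha$ without any information about maximizers.
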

\begin{proof}
Write $\rho(x)=(1-d)x^{-d}$ on $(0,1)$ and extend it by zero to $\R$.
Thus $0\le\rho\le1-d$ because $d\le0$.
For $f,g\in L^p(\mu_d)$ set $u=\rho^{1/p}f$, $v=\rho^{1/p}g$ and extend
both by zero. Then $\|u\|_{L^p(\R)}=\|f\|_{L^p(\mu_d)}$, and the
bilinear form becomes
\[
 B_\alpha(f,g)=\iint k_\alpha(x-y)
 \rho(x)^{1/m}u(x)\rho(y)^{1/m}v(y)\dd x\dd y,
 \quad k_\alpha(t)=|t|^{-\alpha}\mathbf1_{\{|t|<1\}}.
\]
Put $r=m/2\ge1$. Young's convolution inequality, with
$1+1/m=1/p+1/r$, gives
\[
 |B_\alpha(f,g)-B_\beta(f,g)|
 \le (1-d)^{2/m}\|k_\alpha-k_\beta\|_{L^r(\R)}
       \|f\|_{L^p(\mu_d)}\|g\|_{L^p(\mu_d)}.
\]
Choose $\delta_0>0$ so that $(\alpha_*+\delta_0)r<1$. The logarithmic derivative
of $k_\alpha$ is bounded in $L^r$ uniformly for
$|\alpha-\alpha_*|<\delta_0$, since
\[
 \int_0^1 t^{-(\alpha_*+\delta_0)r}|\log t|^r\dd t<\infty.
\]
Thus $\|k_\alpha-k_{\alpha_*}\|_r=O(|\alpha-\alpha_*|)$.
The same Young bound proves finiteness of $D_*$; the test function
$f=1$ proves positivity. Taking suprema on the common unit sphere yields
\[
 D_m(\alpha,d)=D_*+O(\varepsilon).
\]
Finally, the beta-function expression for the variance normalization gives
\[
 Z_m(\alpha,d)
 =(1-d)^2\frac{2B(1-d,\varepsilon)}{1-2d+\varepsilon}
 =\frac{2(1-d)^2}{(1-2d)\varepsilon}
       \bigl[1+O(\varepsilon)\bigr].
\]
Substitution into Proposition~\ref{quad:prop:dual} proves the theorem.
\end{proof}

\begin{corollary}\label{cor:critical-upper}
The prefactor in Theorem~\ref{thm:critical} satisfies
\[
 C_{m,d}\le\frac2{\sqrt{m!}}
       \sqrt{\frac{1-2d}{1-d}},\qquad d\le0.
\]
In particular $C_{m,0}\le2/\sqrt{m!}$.
\end{corollary}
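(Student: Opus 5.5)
The plan is to reduce the corollary to a bound on $D_*$. The formula for $C_{m,d}$ in Theorem~\ref{thm:critical}, with $\eta=1-d\ge1$, gives
\[
 C_{m,d}=\frac{D_*^{m/2}}{\eta}\sqrt{\frac{1-2d}{2m!}} .
\]
So the claimed inequality $C_{m,d}\le\frac{2}{\sqrt{m!}}\sqrt{(1-2d)/\eta}$ is equivalent to
\[
 D_*^{m/2}\le 2\sqrt{2\eta}.
\]
I will prove this through a three-factor H\"older inequality that isolates the single-row integral $\sup_x\int_0^1|x-y|^{-1/2}\mu_d(\dd y)$. I will then bound that row integral by a bathtub (rearrangement) argument, using $0\le(1-d)y^{-d}\le\eta$ for $d\le0$.

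\textbf{Step 1 (H\"older).} Since $D_*$ is a supremum of $\iint|x-y|^{-1/m}f(x)f(y)\,\dd\mu_d^{\otimes2}$, it suffices to treat $f\ge0$ with $\|f\|_{L^p(\mu_d)}=1$ and $p=m/(m-1)$. For $m\ge3$ I factor the integrand on $(\mu_d\otimes\mu_d)$ as $F_1F_2F_3$, where
\[
 F_1=|x-y|^{-1/2m}f(x)^{p/m},\qquad
 F_2=|x-y|^{-1/2m}f(y)^{p/m},\qquad
 F_3=f(x)^{a}f(y)^{a},\qquad a=\frac{m-2}{m-1}.
\]
Indeed $p/m+a=1$, and the exponents of $|x-y|$ add to $-1/m$. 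I apply H\"older with exponents $m$, $m$ and $r=m/(m-2)$, so that $2/m+1/r=1$. Since $ar=p$, the third factor gives $\iint F_3^r=\|f\|_p^{2p}=1$. The first factor gives
\[
 \iint F_1^m=\int f(x)^p\Bigl(\int|x-y|^{-1/2}\mu_d(\dd y)\Bigr)\mu_d(\dd x)\le\widetilde C:=\sup_x\int_0^1|x-y|^{-1/2}\mu_d(\dd y),
\]
and the second gives the same bound by symmetry. Hence $D_*\le\widetilde C^{2/m}$, that is, $D_*^{m/2}\le\widetilde C$. For $m=2$ the third factor is absent, and the same bound is the Cauchy--Schwarz/Schur estimate with the identical row integral. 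Note that this estimate does not pass through the crude bound $\rho\le\eta$ inside the bilinear form, which would lose a factor $\sqrt\eta$.

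\textbf{Step 2 (bathtub bound).} Fix $x$ and let $g(y)=\eta y^{\eta-1}\mathbf 1_{(0,1)}(y)$. Then $0\le g\le\eta$ because $\eta\ge1$, and $\int g=1$. The kernel $t\mapsto|t|^{-1/2}$ is symmetric decreasing about $x$. Among all $0\le g\le\eta$ of unit mass, $\int|x-y|^{-1/2}g(y)\,\dd y$ is maximized by $g=\eta\mathbf 1_{\{|y-x|<1/(2\eta)\}}$; this is the bathtub principle, or Hardy--Littlewood rearrangement. The maximum equals
\[
 2\eta\int_0^{1/(2\eta)}t^{-1/2}\,\dd t=4\eta\sqrt{1/(2\eta)}=2\sqrt{2\eta}.
\]
Thus $\widetilde C\le2\sqrt{2\eta}$. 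Combined with Step 1 this gives $D_*^{m/2}\le2\sqrt{2\eta}$, and substituting into the expression for $C_{m,d}$ yields the stated inequality. At $d=0$ we have $\eta=1$, so $C_{m,0}\le2/\sqrt{m!}$.

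\textbf{Main obstacle.} The delicate point is obtaining the factor $\sqrt\eta$ rather than $\eta$. Any estimate that first bounds the density $\rho$ by $\eta$ pointwise inside the bilinear form, such as Young's inequality as in the proof of Theorem~\ref{thm:critical} or an unweighted Schur test, loses a power of $\eta$. The H\"older splitting in Step 1 avoids this by placing the whole measure $\mu_d(\dd y)$ into a single row integral, and the bathtub step is what extracts $\sqrt\eta$ from it. I would double-check the exponent bookkeeping in Step 1, namely $p/m+a=1$, $ar=p$ and $m\cdot\frac1{2m}=\frac12$. I would also check that no integrability issue arises: the row integral is finite because $1/2<1$.
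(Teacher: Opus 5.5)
Your proof is correct, but it takes a different route from the paper.

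\textbf{The paper's route.} The paper derives the corollary as a limit of its global ceiling $\Lambda_m\le\widetilde U_{m,\alpha,d}$ from Theorem~\ref{thm:finite-bounds}. The substitution $x=u^{1/\eta}$ makes $\mu_d$ uniform. For $\eta\ge1$, the inequality $|u^{1/\eta}-v^{1/\eta}|\ge|u-v|/\eta$ dominates the kernel by $\eta^\alpha|u-v|^{-\alpha}$. Periodic Young on the circle then gives $D_m^{m/2}\le(2\eta)^{s/2}/(1-s/2)$ for every $s<1$. The paper divides by $\sqrt{1-s}$, uses $Z_m\sim2\eta^2/[(2\eta-1)(1-s)]$, lets $s\uparrow1$, and identifies the left side with $C_{m,d}$ through Theorem~\ref{thm:critical}.

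\textbf{Your route.} You work directly at the endpoint exponent $1/m$, using the closed form of $C_{m,d}$ in terms of $D_*$. This needs no limit passage and no asymptotics of $Z_m$. In place of the change of variables and the circle, you use a split-kernel Schur/H\"older estimate followed by the bathtub principle. The splitting is the real point. It refines the row bound in Lemma~\ref{lr:lem:density}, which applies H\"older to the full kernel $|x-y|^{-s}$ and so produces the constant $C_s$ that diverges as $s\uparrow1$. Halving the exponent keeps the row integral finite at $s=1$.

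\textbf{Comparison.} Run at a general $s<1$, your two steps give $D_m^{m/2}\le\sup_x\int|x-y|^{-s/2}\mu_d(\dd y)\le(2\eta)^{s/2}/(1-s/2)$. This is exactly the paper's $d\le0$ ceiling $\widetilde U_{m,\alpha,d}$, so the two mechanisms yield identical bounds. Both avoid the $\sqrt\eta$ loss you mention: you keep the whole measure in one row integral, while the paper lets $\eta$ enter only as $\eta^\alpha$ through the kernel comparison. Your argument is more self-contained. The paper's change of variables also fits a uniform treatment of $0<d<1/2$ via $|u^{1/\eta}-v^{1/\eta}|\ge|u-v|^{1/\eta}$; there the density of $\mu_d$ is unbounded, so your cap $g\le\eta$ is not available.
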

\begin{proof}
Use $Z_m\sim2(1-d)^2/[(1-2d)(1-s)]$ and the $d\le0$
formula for $\widetilde U_{m,\alpha,d}$ in Section~\ref{sec:global}.
They give
\[
 \widetilde U_{m,\alpha,d}\sim
 \frac2{\sqrt{m!}}\sqrt{\frac{1-2d}{1-d}}\sqrt{1-s}.
\]
Divide the upper inequality by $\sqrt{1-s}$ and use
Theorem~\ref{thm:critical}.
\end{proof}

\begin{remark}
The boundary coefficient is obtained by extending the deterministic
variational problem to $\alpha_*=1/m$. The rank comparison below
identifies the limit of these coefficients as $m\to\infty$.
\end{remark}

\subsection{The memory boundary of the limiting function}

For $d\le0$, the full interval $0<s<1$ is admissible. Formula
\eqref{lr:eq:Fgamma} has a finite limit $\Fm(1)$ as $s\uparrow1$.

\begin{corollary}\label{lr:cor:boundary}
For fixed $d\le0$,
\begin{equation}\label{lr:eq:boundary}
 \Xi_d(s)=\frac{e^{\Fm(1)}}\eta
             \sqrt{\frac{2\eta-1}{2}}\sqrt{1-s}\,[1+O_d(1-s)].
\end{equation}
At $d=0$ this coefficient is
\begin{equation}\label{lr:eq:Glaisher}
 \lim_{s\uparrow1}\frac{\Xi_0(s)}{\sqrt{1-s}}
 =\mathcal C_0=2^{3/2}\pi^3G(1/2)^8
 =\frac{2^{11/6}\pi e}{\mathsf A^{12}},
\end{equation}
where $\mathsf A=\exp(1/12-\zeta'(-1))$ is the
Glaisher--Kinkelin constant.
\end{corollary}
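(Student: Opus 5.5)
The plan is to treat the two factors of $\Xi_d(s)=e^{\Fm(s)}/\sqrt{Z_d(s)}$ separately. The factor $1/\sqrt{Z_d(s)}$ produces the $\sqrt{1-s}$ decay. The factor $e^{\Fm(s)}$ is shown to be smooth up to $s=1$ when $d\le0$.

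\textbf{The normalization.} By \eqref{lr:eq:Z},
\[
 Z_d(s)=\frac{2\eta^2}{2\eta-s}\,\frac{\Gamma(\eta)\Gamma(1-s)}{\Gamma(\eta+1-s)}.
\]
Writing $\Gamma(1-s)=\Gamma(2-s)/(1-s)$, the remaining factor $\Gamma(\eta)\Gamma(2-s)/\Gamma(\eta+1-s)$ is smooth near $s=1$ and equals $1$ there. Hence
\[
 Z_d(s)=\frac{2\eta^2}{(2\eta-1)(1-s)}[1+O_d(1-s)],
\]
and therefore
\[
 Z_d(s)^{-1/2}=\frac1\eta\sqrt{\frac{2\eta-1}{2}}\sqrt{1-s}\,[1+O_d(1-s)].
\]

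\textbf{The free-energy factor.} For $d\le0$ we have $\eta\ge1$. For $s\in[0,1]$ and $t\in[0,1]$ the three gamma arguments in \eqref{lr:eq:Fgamma} satisfy
\[
 \eta-\tfrac{st}2\ge\tfrac12,\qquad 1-\tfrac{st}2\ge\tfrac12,\qquad \eta+1-\tfrac{s(1+t)}2\ge\eta\ge1 .
\]
So they stay in a compact subset of $(0,\infty)$, uniformly for $s$ in a neighbourhood of $[0,1]$. The integrand is then real-analytic in $s$ with derivatives bounded uniformly in $t$. Differentiating under the integral shows that $\Fm$ is $C^1$ up to and slightly beyond $s=1$. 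This gives $e^{\Fm(s)}=e^{\Fm(1)}[1+O_d(1-s)]$, and multiplying the two expansions proves \eqref{lr:eq:boundary}.

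\textbf{The value at $d=0$.} Here $\eta=1$, and \eqref{lr:eq:Barnes} is not directly usable because $\eta-2\omega=0$ (one could take the limit, using $\mathfrak g(z)\sim -z\log z\to0$ and $G(z)\sim z$). I would instead work from the gamma integral. At $s=1$,
\[
 \mathcal F_0(1)=\int_0^1\bigl[3\log\Gamma(1-\tfrac t2)-\log\Gamma(\tfrac32-\tfrac t2)\bigr]\dd t .
\]
Substituting $u=1-t/2$, and using $\log\Gamma(u+1)=\log\Gamma(u)+\log u$ on the second term, gives
\[
 \mathcal F_0(1)=4\int_{1/2}^1\log\Gamma(u)\dd u-2\int_{1/2}^1\log u\dd u .
\]
The logarithm integral is elementary. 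The gamma integral equals $P(1)-P(1/2)$, with $P$ the primitive from the proof of Proposition~\ref{lr:prop:Barnes}. This produces $\mathfrak g(1/2)=-\tfrac12\log\Gamma(1/2)-\log G(1/2)$ together with $\log\pi$ and rational terms. Collecting, $\mathcal C_0=e^{\mathcal F_0(1)}/\sqrt2$ should reduce to $2^{3/2}\pi^3G(1/2)^8$. The second form then follows from the classical value
\[
 G(1/2)=2^{1/24}e^{1/8}\pi^{-1/4}\mathsf A^{-3/2},
\]
since its eighth power gives $2^{1/3}e\,\pi^{-2}\mathsf A^{-12}$.

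\textbf{Main obstacle.} The only delicate point is the bookkeeping of the elementary constants (powers of $2$, $\pi$ and $e$) in this last reduction. I would cross-check the result numerically: $\mathcal C_0$ must lie below the bound $2/\sqrt{m!}\cdot\sqrt{m!}=2$ coming from Corollary~\ref{cor:critical-upper}.
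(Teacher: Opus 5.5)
Your treatment of the boundary expansion \eqref{lr:eq:boundary} is the paper's argument and is fine. For $\eta\ge1$ the gamma arguments in \eqref{lr:eq:Fgamma} stay in $[\tfrac12,\infty)$, so $\Fm$ is smooth through $s=1$. Also $Z_d(s)=\frac{2\eta^2}{(2\eta-1)(1-s)}[1+O_d(1-s)]$.

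The evaluation at $d=0$ has a slip that breaks it. With $u=1-\tfrac t2$, the third gamma argument is $\tfrac32-\tfrac t2=u+\tfrac12$, not $u+1$. So the recurrence $\log\Gamma(u+1)=\log\Gamma(u)+\log u$ does not apply, and your displayed identity
\[
 \mathcal F_0(1)=4\int_{1/2}^1\log\Gamma(u)\dd u-2\int_{1/2}^1\log u\dd u
\]
is false. Its right side equals $2\log\pi+\tfrac12+4\log G(1/2)\approx0.7677$. The true value is $\mathcal F_0(1)=\log(4\pi^3G(1/2)^8)\approx0.7770$. Carried through, your bookkeeping gives $\mathcal C_0=\pi^2e^{1/2}G(1/2)^4/\sqrt2$ rather than $2^{3/2}\pi^3G(1/2)^8$. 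The fourth power of $G(1/2)$ appears because $\mathfrak g(\tfrac12)$ enters with coefficient $-4$ instead of $-8$. So the ``should reduce to'' covers a computation that does not close. Your sanity check $\mathcal C_0<2$ cannot detect this, since the two values are about $1.52$ and $1.54$; a direct quadrature of the gamma integral would.

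The repair is to substitute $v=\tfrac32-\tfrac t2\in[1,\tfrac32]$ separately in the third term:
\[
 \mathcal F_0(1)=6\int_{1/2}^1\log\Gamma(u)\dd u-2\int_1^{3/2}\log\Gamma(v)\dd v
 =8P(1)-6P(\tfrac12)-2P(\tfrac32).
\]
Use $P(1)=\tfrac12\log(2\pi)$ and $\mathfrak g(\tfrac32)=\mathfrak g(\tfrac12)+\varphi(\tfrac12)$. The polynomial terms then cancel, leaving $\log(2\pi)+\log2-8\mathfrak g(\tfrac12)$. Since $\mathfrak g(\tfrac12)=-\tfrac14\log\pi-\log G(\tfrac12)$, this equals $\log(4\pi^3G(1/2)^8)$. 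This is exactly what the paper reads off from \eqref{lr:eq:Barnes} at $\eta=1$, $\omega=\tfrac12$, using $\mathfrak g(0)=\varphi(0)=0$ by continuity. Once corrected, your route differs only in specializing to $s=1$ before applying the primitive $P$, which spares you that continuity step at the degenerate argument $\eta-2\omega=0$. Your final conversion via $G(1/2)^8=2^{1/3}e\,\pi^{-2}\mathsf A^{-12}$ is correct.
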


\begin{proof}
The gamma-integral expression for $\Fm(s)$ is analytic near $s=1$
when $\eta\ge1$. Moreover,
\[
 Z_d(s)=\frac{2\eta^2}{(2\eta-1)(1-s)}[1+O_d(1-s)].
\]
This proves \eqref{lr:eq:boundary}. At $\eta=1$ the Barnes expression
has $\mathfrak g(0)=\varphi(0)=0$ by continuity and gives
$\mathcal F_0(1)=\log(2\pi)+\log2-8\mathfrak g(1/2)$.
Since $\mathfrak g(1/2)=-\frac14\log\pi-\log G(1/2)$, this gives
$\mathcal F_0(1)=\log\bigl(4\pi^3G(1/2)^8\bigr)$, and division of the
exponential by $\sqrt2$ yields $2^{3/2}\pi^3G(1/2)^8$.
The classical special value
\[
 \log G(1/2)=\frac1{24}\log2-\frac14\log\pi
                         +\frac32\zeta'(-1)
\]
then yields
\[
 \mathcal F_0(1)=\log(4\pi)+\frac13\log2+1-12\log\mathsf A.
\]
Divide its exponential by $\sqrt2$ to obtain \eqref{lr:eq:Glaisher}.
\end{proof}

\subsection{Uniform control up to the boundary}
The row integral $C_s$ from Lemma~\ref{lr:lem:density} diverges
as $s\uparrow1$. A smaller H\"older exponent and interpolation
remove that divergence from the maximizing-density bound.

\begin{lemma}[A density bound uniform in memory and rank]
\label{lem:uniform-density}
Fix $d\le0$, and set
\[
 \mathsf C_d=8\,2^{7/8}\eta,\qquad \mathsf M_d=\mathsf C_d^4.
\]
For every $m\ge2$, $0<s<1$, and every maximizing density
$\rho_m$ in \eqref{lr:eq:renyi},
\[
 \|\rho_m\|_\infty\le\mathsf M_d,
 \qquad D_{p_m}(\rho_m\|\mu_d)\le\frac{16}{7}\log\mathsf C_d.
\]
\end{lemma}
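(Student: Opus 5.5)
The plan is to rerun the argument of Lemma~\ref{lr:lem:density}, changing only the H\"older step. In that lemma the kernel $|x-y|^{-s/m}$ was put in $L^m$, which costs the row integral $C_s$; this integral blows up as $s\uparrow1$. Instead I will put the kernel in $L^{r}$ with $r=7m/8$. Then only the row integral of $|x-y|^{-7s/8}\le|x-y|^{-7/8}$ is needed. For $d\le0$ the density of $\mu_d$ is bounded by $\eta$, so
\[
 \sup_x\int_0^1|x-y|^{-7s/8}\mu_d(\dd y)\le\eta\sup_x\int_0^1|x-y|^{-7/8}\dd y\le 2\eta\int_0^{1/2}t^{-7/8}\dd t=8\,2^{7/8}\eta=\mathsf C_d .
\]
This bound holds uniformly in $0<s<1$ and $m\ge2$; I expect this computation to be the source of the constant $\mathsf C_d$. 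The price is that $\rho$ must now be controlled in the conjugate norm $L^{r'}$, with $r'=7m/(7m-8)$, which is larger than $p_m$. For $m=2$ one has $r=7/4$ and $r'=7/3$, so the same formulas apply.

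The second step is to interpolate $L^{r'}$ between $L^{p_m}$ and $L^\infty$. Write $L=\|\rho_m\|_\infty$ and $D=D_{p_m}(\rho_m\|\mu_d)$, so that $\int\rho_m^{p_m}\dd\mu_d=e^{D/(m-1)}$. Then
\[
 \int\rho_m^{r'}\dd\mu_d\le L^{r'-p_m}e^{D/(m-1)} .
\]
Combining this with H\"older gives
\[
 \log R_\alpha\rho_m(x)\le\frac{8}{7m}\log\mathsf C_d+\Bigl(1-\frac{p_m}{r'}\Bigr)\log L+\frac{D}{r'(m-1)}.
\]
Here $1-p_m/r'=1/(7(m-1))$, so $L$ enters only with the small weight $1/(7(m-1))$.

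The third step feeds this pointwise bound into two inequalities, exactly as in Lemma~\ref{lr:lem:density}. First, the Euler equation \eqref{lr:eq:eulerrho} with $N_m\ge1$ gives $L\le\sup_x(R_\alpha\rho_m)^{m-1}$. Since $(m-1)\cdot\frac1{7(m-1)}=\frac17$, this yields
\[
 \tfrac67\log L\le\frac{8(m-1)}{7m}\log\mathsf C_d+\frac{D}{r'} .
\]
Second, $0\le J_m(1)\le J_m(\rho_m)$, and the quadratic energy is at most $\sup_x R_\alpha\rho_m$. Hence
\[
 D\le\frac m2\sup_x\log R_\alpha\rho_m .
\]
Since $m/(2r')\le 1/2$ up to lower-order terms, the coefficient of $D$ on the right is strictly below one, so this inequality can be absorbed. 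Together the two inequalities form a closed linear system in $(\log L,D)$. Solving it, and using crude bounds such as $(m-1)/m\le1$ and $1/r'\le1$, should give $D\le\frac{16}{7}\log\mathsf C_d$ and $\log L\le4\log\mathsf C_d$; the latter is $L\le\mathsf M_d$.

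The main obstacle is the bookkeeping in this linear system. The coefficients must be checked uniformly for all $m\ge2$, and in particular at $m=2$, where $p_m=2$ and $r'=7/3$. It is also important that the Euler equation bound cannot be used alone: interpolating with exponent $L^\infty$ loses nothing only because $L$ appears with power $1/7<1$. If the constants $\frac{16}{7}$ and $4$ do not come out exactly, the choice $r=7m/8$ is the parameter to adjust. The qualitative conclusion, bounds independent of $s$ and $m$, already follows from any fixed exponent $r=\theta m$ with $\theta<1$.
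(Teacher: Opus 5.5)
Your proposal is correct and is essentially the paper's proof: it uses the same H\"older exponent $r=7m/8$ with the row bound $\mathsf C_d$, the same interpolation weight $1/(7(m-1))$ between $L^{p_m}$ and $L^\infty$, and the same two inequalities coming from $J_m(\rho_m)\ge J_m(1)\ge0$ and from the Euler equation with $N_m\ge1$. The system closes with exactly the stated constants: bounding $m/(7m-6)\le 2/7$ in the energy inequality gives $D\le\frac87\log\mathsf C_d+\frac27\log L$, and the Euler step gives $\log L\le\frac43\log\mathsf C_d+\frac76D$, whence $D\le\frac{16}{7}\log\mathsf C_d$ and $\log L\le4\log\mathsf C_d$; you should state explicitly that the absorption uses $\|\rho_m\|_\infty<\infty$ from Lemma~\ref{lr:lem:density} at each fixed $s<1$, and correct your remark that the coefficient of $D$ in the energy inequality is $m/(2r')$: it is $m/(2r'(m-1))<\frac12$.
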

\begin{proof}
The density of $\mu_d$ is at most $\eta$, so
\[
 \sup_x\int|x-y|^{-7s/8}\mu_d(\dd y)
 \le\eta\sup_x\int_0^1|x-y|^{-7/8}\dd y=\mathsf C_d.
\]
Write $\rho=\rho_m$, $D=D_{p_m}(\rho\|\mu_d)$,
$h=\log\|\rho\|_\infty$ and $c=\log\mathsf C_d$.
All are finite by Lemma~\ref{lr:lem:density} at each $s<1$,
and $D,h\ge0$. Set
\[
 r=7m/8,\qquad q=\frac r{r-1},\qquad
 \theta=\frac1{7(m-1)},\qquad \frac1q=\frac{1-\theta}{p_m}.
\]
H\"older and interpolation between $L^{p_m}$ and $L^\infty$ give
\[
 \log\|R_{s/m}\rho\|_\infty
 \le\frac{8c}{7m}+\frac{1-\theta}{m}D+\theta h,
\]
since $\log\|\rho\|_{p_m}=D/m$.
The maximizing value is nonnegative and its energy is at most
$\|R_{s/m}\rho\|_\infty$. Hence
\[
 (1+\theta)D\le\frac87c+m\theta h,
 \qquad D\le\frac87c+\frac27h,
\]
where $m\theta\le2/7$. The Euler equation
\eqref{lr:eq:eulerrho}, including $N_m\ge1$, also gives
\[
 h\le(m-1)\log\|R_{s/m}\rho\|_\infty
 \quad\Longrightarrow\quad
 h\le\frac43c+\frac76D.
\]
Combining the last two inequalities yields $h\le4c$ and
$D\le16c/7$, as required. The argument is carried out at $s<1$;
the preceding interior density bound justifies the interpolation.
\end{proof}

\begin{proof}[Proof of Theorem~\ref{thm:boundary-rank}]
Use Lemma~\ref{lem:uniform-density} in the proof of
Proposition~\ref{lr:prop:limit}. The limiting optimizing density
can likewise be chosen bounded by $\mathsf M_d$.
Since $\ell\ge0$, Lemma~\ref{lr:lem:error} gives
\[
 A_{s,d,\mathsf M_d}\le\mathcal K_d
 :=\frac{\mathsf M_d^2}{4}
       \iint \ell(x,y)^2e^{\ell(x,y)/2}\dd\mu_d^{\otimes2}<\infty.
\]
Rank monotonicity then proves \eqref{eq:uniform-rank} for the
whole interval $0<s<1$. For fixed $m$, let $s\uparrow1$ in
that inequality. Theorem~\ref{thm:critical} and
Corollary~\ref{lr:cor:boundary} identify the two limits after
dividing by $\sqrt{1-s}$, giving \eqref{eq:boundary-rank}.
The same passage in Proposition~\ref{prop:rank-transfer} proves
monotonicity of $\sqrt{m!}C_{m,d}$.
Finally combine \eqref{eq:uniform-rank} with
\[
 \frac{\Xi_d(s)}{\sqrt{1-s}}=\mathcal C_d[1+O_d(1-s)].
\]
This proves \eqref{eq:joint-boundary}. Formula
\eqref{lr:eq:Glaisher} supplies the value at $d=0$.
\end{proof}

In particular, the earlier upper bound and the identified monotone
limit give
\[
 \mathcal C_d\le\sqrt{m!}C_{m,d}
 \le2\sqrt{\frac{2\eta-1}{\eta}},\qquad d\le0.
\]
The estimate is uniform in $m$ and $s$ for each fixed $d\le0$.
The constants $\mathsf M_d$ and $\mathcal K_d$ are explicit
but have not been optimized.

\section{Finite-rank bounds accompanying the limit}\label{sec:global}
The explicit limit gives a lower bound at every finite rank.
It can be combined with elementary trial functions and upper estimates
without changing the variational problem. The formulas needed for
that comparison are recorded here; detailed uniform-ratio calculations
are in Section~S.4 of the Supplement.

Put $w(x)=x^{-d}$ and
\[
 \begin{gathered}
 g(x)=\frac{x^{1-\alpha}+(1-x)^{1-\alpha}}{1-\alpha},\\
 E=\int_0^1g(x)\dd x=\frac2{(1-\alpha)(2-\alpha)},
 \qquad F=\int_0^1w(x)g(x)^m\dd x.
 \end{gathered}
\]
Define the beta-function trial bound
\begin{equation}\label{eq:explicit-lower}
 L_{m,\alpha,d}=\frac{F}{E^{m/2}\sqrt{m!I_{m,\alpha,d}}}.
\end{equation}
Expansion of $g^m$ evaluates $F$ as a finite sum of beta functions.
For $e>\alpha/2$ set
\[
 \mathcal Z(\alpha,e)=\frac{2e^2B(e,1-\alpha)}{2e-\alpha},\qquad
 H_\tau=\frac{(1+p\tau)^{2/p}}{(1+\tau)^2}
                       \mathcal Z(\alpha,\eta(1+\tau)),\qquad
 \tau_* =\frac{\alpha(m-1)}{2-s}.
\]
The trial $f_\tau(x)=(1+p\tau)^{1/p}x^{\eta\tau}$ has
$L^p(\mu_d)$ norm one and energy $H_\tau$.
Finally put
\[
 \widetilde U_{m,\alpha,d}=
 \begin{cases}
 \displaystyle\frac{2^{s/(2\eta)}}{(1-s/(2\eta))\sqrt{m!Z_d(s)}},
                     &0\le d<1/2,\\[6pt]
 \displaystyle\frac{(2\eta)^{s/2}}{(1-s/2)\sqrt{m!Z_d(s)}},&d\le0,
 \end{cases}
\]
\begin{equation}\label{eq:weighted-sandwich}
 \begin{split}
 \underline L&=\max\left\{L_{m,\alpha,d},
               \frac{H_0^{m/2}}{\sqrt{m!Z_d(s)}},
               \frac{H_{\tau_*}^{m/2}}{\sqrt{m!Z_d(s)}}\right\},\\
 \underline L^\sharp&=\max\{\underline L,\Xi_d(s)/\sqrt{m!}\},\qquad
 \overline U=\min\{(m!)^{-1/2},\widetilde U_{m,\alpha,d}\}.
 \end{split}
\end{equation}

\begin{theorem}[Global finite-rank comparison]\label{thm:finite-bounds}
For every admissible triple,
\[
 0<L_{m,\alpha,d}<\Lambda_m(\alpha,d)<(m!)^{-1/2},
 \qquad \underline L^\sharp\le\Lambda_m(\alpha,d)\le\overline U.
\]
Moreover
\[
 \frac{\overline U}{\underline L^\sharp}\le
 C_*:=4e^{-(1+\gamma_E)/2}<1.818;
\]
for $d\ge0$ the factor can be replaced by $\sqrt3$. At $d=0$,
\[
 \overline U=\frac1{\sqrt{m!}}\min\left\{1,
       2^{s/2}\sqrt{\frac{1-s}{1-s/2}}\right\},\qquad
 \frac{\overline U}{L_{m,\alpha,0}}\le
 \varrho_m:=2\sqrt2[(1-m^{-1})(1-(2m)^{-1})]^{m/2}.
\]
The harmonic mean of $\underline L$ and $\overline U$ approximates
$\Lambda_m$ with relative error at most
\[
 \frac{\overline U-\underline L}{\overline U+\underline L}.
\]
Replacing $\underline L$ by $\underline L^\sharp$ gives the
corresponding improved bound.
\end{theorem}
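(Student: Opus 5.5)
I would work entirely through the exact formula \eqref{quad:eq:exact}, $\Lambda_m=D_m^{m/2}/\sqrt{m!Z_m}$, and bound $D_m$ from both sides by elementary means.

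\textbf{Lower bounds.} Each candidate in $\underline L^\sharp$ is a trial value or an earlier result.
\begin{itemize}
\item For $L_{m,\alpha,d}$, insert $h=1$ into the interval representation \eqref{quad:eq:interval}. Then $K_\alpha 1=g$ and $\ip{1}{K_\alpha 1}=E$, so the ratio is exactly $F/E^{m/2}$. It is positive because $g>0$.
\item For $H_\tau^{m/2}/\sqrt{m!Z_d(s)}$, use the normalized trial $f_\tau$ in \eqref{quad:eq:D}. This gives $D_m\ge H_\tau$. The energy of $f_\tau$ is computed from \eqref{quad:eq:I} after the change $\eta\mapsto\eta(1+\tau)$, which produces $\mathcal Z(\alpha,\eta(1+\tau))$ times the normalization factors.
\item The bound $\Xi_d(s)/\sqrt{m!}$ is Corollary~\ref{lr:cor:finite}. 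That corollary uses only Theorem~\ref{lr:thm:main}'s monotone convergence and not the strict inequality $\Xi_d<1$, so there is no circularity if this part is proved first.
\item Strictness $L<\Lambda_m$ will follow because $h=1$ is not a maximizer. Its image $f=1$ does not satisfy the Euler equation \eqref{quad:eq:Euler}, since $R_\alpha1$ is nonconstant.
\end{itemize}

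\textbf{Upper bounds.}
\begin{itemize}
\item For $(m!)^{-1/2}$, use Hölder as in the proof of Proposition~\ref{quad:prop:dual}: $D_m\le Z_m^{1/m}$, hence $\Lambda_m\le(m!)^{-1/2}$. Equality would force $|x-y|^{-\alpha}$ to be proportional to $(f\otimes f)^{p-1}$, which is impossible for a non-product kernel. This gives strictness.
\item For $\widetilde U$, use the row bound as in Lemma~\ref{lr:lem:density}: $D_m\le\sup_x\int|x-y|^{-\alpha}\dots$, refined at exponent $s$ via Proposition~\ref{prop:rank-transfer}. More precisely, apply the Schur test $D_m\le\sup_x\int|x-y|^{-s}\mu_d(\dd y)^{\cdot}$ after raising to the power $m/2$ appropriately, which gives $D_m^{m/2}\le C_s^{1/2}$-type bounds. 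Then estimate $C_s$ explicitly. For $d\le0$, the bound $\mu_d\le\eta$ gives $C_s\le\eta\cdot 2\cdot 2^{s-1}/(1-s)$, adjusted to the form $(2\eta)^{s/2}/(1-s/2)$ by using the exponent $s/2$ via rank two and a Schur test with weight. For $d\ge0$, split at $x/2$ as before.
\end{itemize}
I expect matching the exact displayed constants of $\widetilde U$ to require choosing the Schur weight $x^{\eta\tau}$ carefully; that is routine but fiddly.

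\textbf{Ratio bound.} This is the main obstacle, since it is a uniform inequality over all $(m,s,d)$.
\begin{itemize}
\item I would write $\overline U/\underline L^\sharp\le \widetilde U\sqrt{m!}/\max(H_{\tau_*}^{m/2}/\sqrt{Z_d},\Xi_d)$ and take logarithms.
\item Next, bound $\log H_{\tau_*}$ from below using convexity of $\log\Gamma$, so that $(m/2)\log H_{\tau_*}$ exceeds $\tfrac12\log Z_d(s)$ plus an explicit function of $s$ alone. The constant $4e^{-(1+\gamma_E)/2}$ strongly suggests that the worst case is $m\to\infty$, $s\uparrow1$. There the digamma value $\psi(1)=-\gamma_E$ appears from expanding $B(\eta(1+\tau),1-\alpha)$, and the factor $4$ comes from $2^{s/2}\cdot 2$-type terms at $s=1$.
\item So I would prove monotonicity in $m$ using Proposition~\ref{prop:rank-transfer}-style Hölder arguments on the trial functions, then maximize over $s$ by checking the sign of a derivative. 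The case $d\ge0$, with the bound $\sqrt3$, is handled by the same scheme using the other branch of $\widetilde U$.
\end{itemize}

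\textbf{The case $d=0$.} Here $\overline U$ and $\varrho_m$ follow by direct substitution of $Z_0(s)=2/((1-s)(2-s))$ and the bound $F\ge\int g^m\ge$ the endpoint contribution.

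\textbf{Harmonic mean.} The last claim is elementary: for $\underline L\le\Lambda\le\overline U$, the harmonic mean $2\underline L\,\overline U/(\underline L+\overline U)$ lies within relative error $(\overline U-\underline L)/(\overline U+\underline L)$ of $\Lambda$. This follows by checking the two endpoint cases.
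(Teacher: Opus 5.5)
\textbf{Gaps in the strictness argument and in the upper bound $\widetilde U$.} Your lower-bound trials, the H\"older ceiling with its equality case, the appeal to Corollary~\ref{lr:cor:finite} (your non-circularity remark is correct) and the harmonic-mean step agree with the paper. Two further steps fail as written.

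Your strictness argument for $L_{m,\alpha,d}<\Lambda_m$ targets the wrong problem. Under $h=wg$ the trial $h=1$ corresponds to $g=x^{d}$, not to $f=1$; for $d\le-(m-1)$ this $g$ is not even in $L^p(\mu_d)$. Moreover, the interval quotient is, up to normalization, the tensor quotient $|\int(R_\alpha g)^m\dd\mu|/B_\alpha(g,g)^{m/2}$, not the quotient defining $D_m$. Its stationarity condition at $h=1$ would be $wg^{m-1}=F/E$, not \eqref{quad:eq:Euler}. Passing from a non-maximizer of $D_m$ to a strict inequality here would need an extra duality argument. The paper simply perturbs along $v=wg^{m-1}-(F/E)1$: since $\ip{v}{g}=0$ the energy is stationary, while the numerator grows at rate $m\ip{v}{K_\alpha v}>0$.

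Your tools for $\widetilde U$ also do not produce the stated constants when $d\ne0$:
the rank-$m$ row bound gives $\sqrt{m!}\Lambda_m\le\sqrt{C_s/Z_d(s)}$, which at $d=0$ equals $\sqrt{2^s(1-s/2)}\ge1$ and so never improves the ceiling;
rank two with $\mu_d\le\eta$ gives $D_2(s/2,d)\le\eta\,2^{s/2}/(1-s/2)$ instead of $(2\eta)^{s/2}/(1-s/2)$, an extra factor $\eta^{1-s/2}$ that is unbounded as $d\to-\infty$;
splitting at $x/2$ gives no explicit constant.
The missing idea is the substitution $x=u^{1/\eta}$, which makes $\mu_d$ uniform and preserves $L^p$ norms. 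Combine it with $|u^{1/\eta}-v^{1/\eta}|\ge|u-v|/\eta$ for $\eta\ge1$, and $\ge|u-v|^{1/\eta}$ for $\eta\le1$. Then your own route (Proposition~\ref{prop:rank-transfer} down to rank two, followed by the unweighted Schur test at exponent $s/2$) yields exactly both branches of $\widetilde U$. It gives the same constant as the paper's periodic Young bound, and no weighted Schur test is needed.

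\textbf{The ratio bounds are not proved.} These carry the substance of the theorem, and the mechanisms you suggest would not deliver them. Uniformity in $m$ does not come from monotonicity in $m$: Proposition~\ref{prop:rank-transfer} compares maxima, not trial values. In the paper, explicit inequalities eliminate $m$.

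For $\eta\ge1$, Gautschi's inequality gives $\mathcal Z(\alpha,e)\ge e^\alpha\Gamma(1-\alpha)$. Hence $H_\tau\ge\eta^\alpha\Gamma(1-\alpha)(1+p\tau)^{2/p}(1+\tau)^{\alpha-2}$. The factor $\eta^{s/2}$ then cancels the one in $\widetilde U$, and $\tau_*$ maximizes the remaining envelope. The bounds $\log\Gamma(1-\alpha)\ge\gamma_E\alpha+\alpha^2/2$ and $\log(1-\alpha/2)\le-\alpha/2$ remove $m$, since the terms $s^2/(4m)$ cancel. What remains is $q_-(s)=2^{s/2}e^{-(1+\gamma_E)s/2}(1-s/2)^{s/2-2}$, which increases to $q_-(1)=C_*$.

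For $d\ge0$ this scheme cannot give $\sqrt3$: at $d=0$ its envelope tends to $C_*$ as $m\to\infty$ and $s\uparrow1$. The paper instead uses the constant trial. By Jensen, $H_0=\mathcal Z(\alpha,\eta)\ge\delta(\eta)^{-\alpha}$, where $\delta(\eta)=\E|X-Y|=2\eta/((\eta+1)(2\eta+1))\le3/8$. Together with the admissibility bound $s/(2\eta)<s/(1+s)$, this gives $q_+(s)=(1+s)2^{s/(1+s)}(3/8)^{s/2}\le q_+(1)=\sqrt3$.

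Likewise, $\varrho_m$ at $d=0$ requires Jensen's inequality $F=\int g^m\ge E^m$, not an endpoint contribution. You then need monotonicity in $s$ of $2^{s/2}(1-s/2)^{-1}[(1-s/m)(1-s/(2m))]^{m/2}$, whose value at $s=1$ is $\varrho_m$.
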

\begin{proof}
The lower bound \eqref{eq:explicit-lower} is the trial $h=1$ in
\eqref{quad:eq:interval}. To see its strictness, put
$v=wg^{m-1}-(F/E)1$. Then $v\in L^2(0,1)$ and
\[
 \langle v,g\rangle=0,\qquad
 \left.\frac{d}{d\epsilon}
   \int_0^1w[K_\alpha(1+\epsilon v)]^m\dd x\right|_{\epsilon=0}
 =m\langle v,K_\alpha v\rangle>0.
\]
The energy denominator has zero first derivative.
Strict positivity follows from the Riesz energy identity and
$v\ne0$: for $d\ne0$ use its endpoint behavior, and for $d=0$
use nonconstancy of $g$. The strict upper ceiling follows from
H\"older on the product probability space: equality would require
$|x-y|^{-m\alpha}$ to factor as a product of a function of $x$
and a function of $y$, almost everywhere, which is impossible.
The maximum is attained by Proposition~\ref{quad:prop:dual}.

For the upper bound identify $[0,1]$ with the unit circle, whose
distance is no larger than $|x-y|$. Young's inequality with kernel
exponent $m/2$ gives, for Lebesgue measure,
\[
 D_m(\alpha,0)\le2^\alpha(1-m\alpha/2)^{-2/m}.
\]
Under $x=u^{1/\eta}$, the measure $\mu_d$ becomes uniform.
If $\eta\le1$, use $|u^{1/\eta}-v^{1/\eta}|
\ge|u-v|^{1/\eta}$; if $\eta\ge1$, use
$|u^{1/\eta}-v^{1/\eta}|\ge|u-v|/\eta$.
These give the two formulas for $\widetilde U$.
Direct integration gives $H_\tau$, and Corollary~\ref{lr:cor:finite}
gives the additional lower endpoint.

The uniform ratios use Jensen's inequality and the gamma-ratio
bound $\Gamma(e+1)/\Gamma(e+1-\alpha)\ge e^\alpha$
\cite[Eq.~5.6.4]{DLMF}, with the trial $\tau_*$.
Section~S.4 of the Supplement gives the full inequalities for
$\eta\le1$ and $\eta\ge1$, including the sharper comparison at
$d=0$. Increasing a lower endpoint preserves those guarantees.
For $0<a\le\lambda\le b$, the maximum relative error of
$2ab/(a+b)$ is $(b-a)/(b+a)$, attained at the endpoints.
Apply this identity to each stated pair of bounds.
\end{proof}

\begin{corollary}\label{cor:combined-lower}
For fixed admissible $(s,d)$,
\[
 0\le1-\frac{\underline L^\sharp}{\Lambda_m(s/m,d)}
 \le1-e^{-K_{s,d}/m}\le K_{s,d}/m.
\]
For $d\le0$, $K_{s,d}$ can be replaced by $\mathcal K_d$
uniformly for $0<s<1$.
\end{corollary}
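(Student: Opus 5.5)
The statement to prove is Corollary~\ref{cor:combined-lower}. The plan is to sandwich $\Lambda_m(s/m,d)$ between $\Xi_d(s)/\sqrt{m!}$ and a quantity within a factor $e^{K_{s,d}/m}$ of it, and then observe that $\underline L^\sharp$ sits between these.

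First, the lower end. By definition \eqref{eq:weighted-sandwich}, $\underline L^\sharp\ge\Xi_d(s)/\sqrt{m!}$. By Theorem~\ref{thm:finite-bounds}, $\underline L^\sharp\le\Lambda_m(s/m,d)$. Together these give
\[
 \frac{\Xi_d(s)}{\sqrt{m!}\,\Lambda_m(s/m,d)}\le\frac{\underline L^\sharp}{\Lambda_m(s/m,d)}\le1 .
\]
The right inequality yields the first claim, $0\le1-\underline L^\sharp/\Lambda_m$.

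Second, the rate. The logarithmic error bound \eqref{lr:eq:main} of Theorem~\ref{lr:thm:main} states that $\log\bigl(\sqrt{m!}\Lambda_m(s/m,d)/\Xi_d(s)\bigr)\le K_{s,d}/m$. Equivalently, $\Xi_d(s)/(\sqrt{m!}\Lambda_m)\ge e^{-K_{s,d}/m}$. Inserting this into the left end of the display gives $\underline L^\sharp/\Lambda_m\ge e^{-K_{s,d}/m}$, that is, $1-\underline L^\sharp/\Lambda_m\le 1-e^{-K_{s,d}/m}$. The last inequality, $1-e^{-x}\le x$ for $x\ge0$, is elementary by convexity of the exponential.

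For $d\le0$, the same chain runs with \eqref{eq:uniform-rank} of Theorem~\ref{thm:boundary-rank} in place of \eqref{lr:eq:main}. There the constant $\mathcal K_d$ is independent of $s\in(0,1)$ and $m\ge2$, so the bound holds uniformly on the full interval.

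There is no genuine obstacle, since everything reduces to earlier results. The only point to check is that Theorem~\ref{thm:finite-bounds} indeed includes the inequality $\underline L^\sharp\le\Lambda_m$, which rests on Corollary~\ref{lr:cor:finite}. Also, the admissibility of $(s/m,d)$ for every $m\ge2$ follows from $s<1$ and $2d+s<1$.
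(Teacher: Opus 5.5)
Your proposal is correct and matches the paper's own proof: sandwich $\Xi_d(s)/\sqrt{m!}\le\underline L^\sharp\le\Lambda_m(s/m,d)$, then apply the logarithmic rate bound of Theorem~\ref{lr:thm:main} (or Theorem~\ref{thm:boundary-rank} for the uniform statement when $d\le0$). The elementary inequality $1-e^{-x}\le x$ finishes the chain.
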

\begin{proof}
Use $\underline L^\sharp\ge\Xi_d(s)/\sqrt{m!}$ in
Theorems~\ref{lr:thm:main} and \ref{thm:boundary-rank}.
\end{proof}
The improved lower endpoint is thus asymptotically exact in rank.
The power trials also remain useful for large negative $d$, where
the original trial $L_{m,\alpha,d}$ tends to zero.

\section{Local expansions at fixed Hermite rank}\label{sec:local}
\subsection{Why a local analytic branch gives the global maximum}\label{sec:branch}

Fix $m\geq2$ and $d<1/2$ throughout this section, and let $\alpha\downarrow0$ through
\eqref{quad:eq:region}. Put
\[
 \ell(x,y)=-\log|x-y|,
 \qquad R_0f=\int f\dd\mu.
\]
The expansion uses the deterministic extension at $\alpha=0$.

\begin{lemma}\label{quad:lem:branch}
For all sufficiently small positive $\alpha$, the nonnegative maximizer in
\eqref{quad:eq:D} is unique and belongs to a real analytic branch
$\alpha\mapsto(f_\alpha,D_\alpha)$ in $C[0,1]\times\R$, with
$f_0=1$ and $D_0=1$.
\end{lemma}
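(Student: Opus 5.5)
The plan is to use the implicit function theorem at $\alpha=0$ on the Euler equation, and then use a compactness argument to show that every global maximizer for small $\alpha$ lies on the resulting branch.

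\emph{Setting up the map.} Write the normalized Euler equation \eqref{quad:eq:Euler} as an equation for a probability-type density. Put $f=1+u$, where $u$ lies in the closed subspace $C_0=\{u\in C[0,1]:\int u\dd\mu=0\}$. Define
\[
 \Phi(\alpha,u,D)=\Bigl(R_\alpha(1+u)-D(1+u)^{p-1},\ \textstyle\int (1+u)^p\dd\mu-1\Bigr).
\]
This map is analytic near $(0,0,1)$, and at $\alpha=0$ we have $R_0f=\int f\dd\mu$. The first ingredient is real analyticity of $\alpha\mapsto R_\alpha$ as a bounded operator $C[0,1]\to C[0,1]$, for $\alpha$ in a complex neighbourhood of $0$. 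This follows from the series
\[
 |x-y|^{-\alpha}=\sum_k \frac{\alpha^k\ell^k}{k!}
\]
together with the uniform row bounds
\[
 \sup_x\int \ell(x,y)^k\,\mu(\dd y)\le k!\,c^{-k}C_c,
\]
obtained as in Lemma~\ref{lr:lem:density}, which give a convergent operator series. One also needs continuity of each $R_\alpha$ into $C[0,1]$; this holds by dominated convergence since $\alpha<1$.

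\emph{Invertibility of the linearization.} The derivative in $(u,D)$ at $(0,0,1)$ is
\[
 (v,\delta)\mapsto\Bigl(\textstyle\int v\dd\mu-(p-1)v-\delta,\ p\int v\dd\mu\Bigr).
\]
On $C_0$ this reduces to $(v,\delta)\mapsto(-(p-1)v-\delta,\,0)$. To make the map bijective, I will not impose $v\in C_0$. Instead I keep $u\in C[0,1]$ and use the constraint component to fix the mean. The system $-(p-1)v+\int v-\delta=g$, $p\int v=c$ is then uniquely solvable: the second equation gives the mean, averaging the first gives $\delta$, and the first then gives $v$, since $p-1=1/(m-1)\neq0$. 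The implicit function theorem therefore yields a unique analytic branch $(f_\alpha,D_\alpha)$ in a neighbourhood $U$ of $(1,1)$ in $C[0,1]\times\mathbb R$.

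\emph{Capturing all maximizers (main obstacle).} The difficulty is to show that every nonnegative normalized maximizer $f$ lies in $U$ for all sufficiently small $\alpha$; this gives both uniqueness and membership of the branch. Proposition~\ref{quad:prop:dual} supplies the equation $f^{p-1}=R_\alpha f/D_m$ with $D_m\ge B_\alpha(1,1)\ge1$. The approach is a bootstrap.
\begin{enumerate}
\item By Hölder, $R_\alpha f$ is bounded uniformly by $C^{1/m}$, where $C$ is the row constant, so $f=(R_\alpha f/D_m)^{m-1}$ is uniformly bounded.
\item The uniform bound gives equicontinuity of $R_\alpha f$, via continuity of $\alpha\mapsto R_\alpha$ and of translations of the kernel.
\item Since $R_\alpha f\ge1$, the map $t\mapsto t^{m-1}$ is Lipschitz on the relevant range, so $f$ is equicontinuous as well.
\end{enumerate}
By Arzelà–Ascoli, maximizers along any sequence $\alpha_j\to0$ have limit points in $C[0,1]$. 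Each such limit solves the $\alpha=0$ Euler equation, $f^{p-1}=\int f\dd\mu/D_0$, so it is constant and normalized, hence equals $1$. In addition, $D_m(\alpha)\to D_0=1$ by the continuity estimate used in Theorem~\ref{thm:critical}, where the $\|k_\alpha-k_0\|$ bound replaces the $\alpha_*$ bound. So all maximizers eventually enter $U$ and coincide with $f_\alpha$. Finally, $f_0=1$ and $D_0=1$ hold because at $\alpha=0$ the form is $(\int f)^2\le\|f\|_p^2$, with equality exactly at $f=1$.
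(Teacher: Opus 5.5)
Your proposal is correct and follows the paper's proof: you apply the analytic implicit function theorem to $(R_\alpha f-Df^{p-1},\int f^p\dd\mu-1)$ at $(0,1,1)$ with the same invertible linearization, and you show that every nonnegative normalized global maximizer converges uniformly to $1$. The paper gets this convergence directly from $\norm{R_\alpha-R_0}_{L^p\to C}=O(\alpha)$, which gives in turn $D_\alpha=1+O(\alpha)$, $\int f_\alpha\dd\mu\to1$, $R_\alpha f_\alpha\to1$ uniformly, and then $f_\alpha\to1$ via the Euler equation; your Arzel\`a--Ascoli bootstrap reaches the same conclusion by compactness.

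Two small slips. First, for $L^p$-normalized $f$ you only have $R_\alpha f\ge\int f\dd\mu$, and $\int f\dd\mu\le1$, so $R_\alpha f\ge1$ does not follow (that bound belongs to the $L^1$-normalized densities of Lemma~\ref{lr:lem:density}). It is not needed, because $t\mapsto t^{m-1}$ is Lipschitz on bounded subsets of $[0,\infty)$. Second, for $0<d<1/2$ you cannot get $D_\alpha\to1$ from the Young estimate of Theorem~\ref{thm:critical}, which needs $d\le0$. Use instead the product-space H\"older bound of Proposition~\ref{quad:prop:dual}, or your own uniform $L^\infty$ bound on maximizers.
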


\begin{proof}
Choose $\varepsilon>0$ so that $m\varepsilon+\max(d,0)<1$.
The elementary integrability bound
\begin{equation}\label{quad:eq:row}
 \sup_{x\in[0,1]}\int_0^1
 |x-y|^{-m\varepsilon}(1+|\log|x-y||^k)\mu(\dd y)<\infty
 \qquad(k\geq0)
\end{equation}
follows by splitting at the diagonal and the endpoint $0$. At the common
singularity the total power is at most $m\varepsilon+\max(d,0)<1$;
logarithms are absorbed by a smaller positive power margin. The same
splitting, with the singular neighborhood first removed, gives continuity
of $x\mapsto |x-\cdot|^{-\alpha}$ in $L^m(\mu)$. H\"older's inequality
and the mean-value formula for $e^{\alpha\ell}$ therefore give
\begin{equation}\label{quad:eq:opsmall}
 R_\alpha:L^p(\mu)\to C[0,1],\qquad
 \norm{R_\alpha-R_0}_{L^p\to C}=O(\alpha).
\end{equation}
The dominated kernel series also makes $R_\alpha$ an analytic family of
bounded operators on $C[0,1]$ in a two-sided neighborhood of zero.

At zero, $B_0(f,f)=(\int f\dd\mu)^2\leq\norm f_p^2$.
For nonnegative normalized $f$, equality holds only for $f=1$.
By \eqref{quad:eq:opsmall}, $D_\alpha=1+O(\alpha)$. For any nonnegative global
maximizer $f_\alpha$,
\[
 D_\alpha=\left(\int f_\alpha\dd\mu\right)^2+O(\alpha),
 \qquad \int f_\alpha\dd\mu\longrightarrow1.
\]
The error is uniform over the unit sphere. Equation \eqref{quad:eq:opsmall}
then gives $R_\alpha f_\alpha\to1$ uniformly. By \eqref{quad:eq:Euler},
\[
 f_\alpha=\left(\frac{R_\alpha f_\alpha}{D_\alpha}\right)^{1/(p-1)}
 \longrightarrow1\quad\text{in }C[0,1].
\]
In particular every such maximizer has a continuous, strictly positive
representative for small $\alpha$.

On the open set of strictly positive continuous functions, consider
\[
 \mathcal F(\alpha,f,D)=
 \left(R_\alpha f-Df^{p-1},\ \int f^p\dd\mu-1\right).
\]
This map is analytic. With $q=p-1>0$, its derivative in $(f,D)$ at
$(0,1,1)$ is
\begin{equation}\label{quad:eq:derivative}
 (h,\delta)\longmapsto
 \left(\int h\dd\mu-qh-\delta,\ p\int h\dd\mu\right).
\end{equation}
It is invertible. Indeed, for prescribed output $(g,b_0)$, set
$c=b_0/p$, $h=c-q^{-1}(g-\int g\dd\mu)$, and
$\delta=(1-q)c-\int g\dd\mu$.
The analytic implicit-function theorem yields a unique local branch.
Every global maximizer converges uniformly to $(1,1)$ by the preceding
argument and therefore belongs to this branch. This proves the claim
about the global maximum, rather than only a stationary point.
\end{proof}

\subsection{The expansion for a general fixed weight exponent}\label{sec:quadratic}

Let $X,Y$ be independent with law $\mu_d$, and define
\begin{equation}\label{quad:eq:moments}
 a=\E \ell(X,Y),\quad b=\E \ell(X,Y)^2,\quad
 s(x)=\int \ell(x,y)\mu_d(\dd y),\quad v=\Var(s(X)).
\end{equation}
All these moments are finite. Introduce the centered two-variable remainder
\begin{equation}\label{quad:eq:V}
 r_d(x,y)=\ell(x,y)-s(x)-s(y)+a,\qquad
 V_d=\iint r_d(x,y)^2\mu_d(\dd x)\mu_d(\dd y)
     =b-a^2-2v.
\end{equation}
The last identity follows by expansion and
$\E[\ell(X,Y)\mid X]=s(X)$.

\begin{theorem}\label{quad:thm:expansion}
For fixed $m\geq2$ and $d<1/2$, as $\alpha\downarrow0$,
\begin{equation}\label{quad:eq:weighted}
 \boxed{\displaystyle
 \Lambda_m(\alpha,d)=\frac1{\sqrt{m!}}
 \left[1-\frac{m(m-1)}4V_d\alpha^2+O(\alpha^3)\right].}
\end{equation}
The remainder is $O_{m,d}(\alpha^3)$ for fixed $m,d$.
\end{theorem}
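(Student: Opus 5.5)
The plan is to combine the exact formula \eqref{quad:eq:exact} with the analytic branch of Lemma~\ref{quad:lem:branch}. Writing $\ell=-\log|x-y|$, the kernel is $|x-y|^{-\alpha}=e^{\alpha\ell}$. By that lemma the global maximizer is $f_\alpha=1+\alpha h_1+\alpha^2h_2+O(\alpha^3)$ in $C[0,1]$, and $D_\alpha=1+\alpha D_1+\alpha^2D_2+O(\alpha^3)$. Both expansions are genuine Taylor expansions with $O_{m,d}(\alpha^3)$ remainders. The row bound \eqref{quad:eq:row} makes the expansion $R_\alpha=R_0+\alpha R^{(1)}+\tfrac{\alpha^2}2R^{(2)}+O(\alpha^3)$ valid in operator norm on $C[0,1]$, where $R^{(k)}$ has kernel $\ell^k$. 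It then remains to expand $\tfrac m2\log D_\alpha-\tfrac12\log Z_m(\alpha,d)$ to second order.

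First, the normalization. Since $Z_m(\alpha,d)=\E e^{m\alpha\ell(X,Y)}$, the finiteness of the exponential moments near zero gives
\[
 \log Z_m=m\alpha a+\tfrac{m^2\alpha^2}{2}(b-a^2)+O(\alpha^3).
\]

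Second, the coefficients $D_1,D_2$. Rather than solving the Euler equation \eqref{quad:eq:Euler} order by order, I would use the variational characterization directly, which is cleaner.

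For the lower bound, insert the trial $f=c_\alpha\bigl(1+\alpha h\bigr)$ with $h=(s(\cdot)-a)/(p-1)$, where $c_\alpha$ is chosen so that $\|f\|_p=1$. Note $h$ is centered, since $\E s(X)=a$. Expanding $\int(1+\alpha h)^p\dd\mu=1+\tfrac{p(p-1)}2\alpha^2\!\int h^2+O(\alpha^3)$ gives $(\int f\dd\mu)^2=1-(p-1)\alpha^2\int h^2+O(\alpha^3)$. The energy then satisfies
\[
 B_\alpha(f,f)=1+\alpha a+\alpha^2\Bigl[\tfrac b2+2\!\int hs\dd\mu-(p-1)\!\int h^2\dd\mu\Bigr]+O(\alpha^3).
\]
With this choice of $h$ the bracket equals $b/2+v/(p-1)=b/2+(m-1)v$.

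For the matching upper bound, use the branch. Because $f_\alpha\to1$ in $C[0,1]$, one can write $f_\alpha=c(1+\alpha k_\alpha)$ with $\int k_\alpha\dd\mu=0$ and $k_\alpha$ bounded. The same expansion, now with remainders uniform in bounded $k$, shows $D_\alpha$ is at most $1+\alpha a+\alpha^2\sup_k[\,b/2+2\int ks-(p-1)\int k^2]+O(\alpha^3)$. This supremum of a concave quadratic is attained at the $h$ above. Hence
\[
 D_\alpha=1+\alpha a+\alpha^2\bigl(\tfrac b2+(m-1)v\bigr)+O(\alpha^3).
\]

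Third, the combination. We have $\tfrac m2\log D_\alpha=\tfrac{m\alpha a}2+\tfrac{m\alpha^2}{4}(b-a^2)+\tfrac{m(m-1)}2v\alpha^2+O(\alpha^3)$. Subtracting $\tfrac12\log Z_m$ cancels the first-order terms and leaves
\[
 \alpha^2\Bigl[\tfrac m4(1-m)(b-a^2)+\tfrac{m(m-1)}2v\Bigr]=-\tfrac{m(m-1)}4(b-a^2-2v)\alpha^2 .
\]
By \eqref{quad:eq:V} this is $-\tfrac{m(m-1)}4V_d\alpha^2$. Exponentiating and dividing by $\sqrt{m!}$ gives \eqref{quad:eq:weighted}.

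The main obstacle is the uniformity of the cubic remainder in the upper bound. One must show that $k_\alpha=(f_\alpha/c-1)/\alpha$ stays bounded in $C[0,1]$, and that the Taylor remainders of $t\mapsto(1+t)^p$ and of $e^{\alpha\ell}$ are $O(\alpha^3)$ uniformly. I would get the boundedness from the analytic implicit-function branch, whose invertible linearization \eqref{quad:eq:derivative} gives $\|f_\alpha-1\|_C=O(\alpha)$. The remainders would then be controlled by \eqref{quad:eq:row} with $k\le3$. The case $d>0$ also needs the logarithmic moments of $\mu_d$ near $0$, which \eqref{quad:eq:row} supplies.
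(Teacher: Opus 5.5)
Your proof is correct. It reaches the $\alpha^2$ coefficient $b/2+(m-1)v$ of $D_\alpha$ by a different mechanism from the paper's.

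The paper substitutes the full second-order branch $f_\alpha=1+\alpha u+\alpha^2z+O(\alpha^3)$ into two relations. The normalization fixes $\int u\dd\mu=0$ and $\int z\dd\mu=-\frac{p-1}{2}\int u^2\dd\mu$. The first-order part of the Euler equation \eqref{quad:eq:Euler} gives $D_1=a$ and $u=(m-1)(s-a)$. The expansion of $\E[e^{\alpha\ell}f_\alpha(X)f_\alpha(Y)]$ is then a direct computation.

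You instead sandwich $D_\alpha$. Your lower bound comes from the trial $c_\alpha(1+\alpha h)$ and uses neither the branch nor the Euler equation. Your upper bound needs only the a priori estimate $\|f_\alpha-1\|_{C[0,1]}=O(\alpha)$. After that, the $\alpha^2$ coefficient is dominated by the maximum of the concave quadratic $2\int k(s-a)\dd\mu-(p-1)\int k^2\dd\mu$ over centered $k$. That maximum is attained exactly at the paper's $u$. Mere convergence $f_\alpha\to1$ would not make $k_\alpha$ bounded. The Lipschitz rate you take from the analytic branch in your last paragraph is what is needed, and with it all remainders are uniformly $O_{m,d}(\alpha^3)$ by \eqref{quad:eq:row}.

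Your route uses less information about the maximizer: no second-order term $z$ and no Euler equation. It also explains the coefficient as the value of a second-order variational problem. The paper's route is more mechanical and continues order by order, which it uses in Lemma~\ref{cubic:lem:cubic}. Your sandwich would extend there too: the strict-concavity penalty $-(p-1)\alpha^2\|k_\alpha-u\|_2^2$ absorbs the $O(\alpha^3\|k_\alpha-u\|_2)$ cubic error.
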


\begin{proof}
Use Lemma~\ref{quad:lem:branch} to write, in $C[0,1]$,
\[
 f_\alpha=1+\alpha u+\alpha^2z+O(\alpha^3),\qquad
 D_\alpha=1+D_1\alpha+D_2\alpha^2+O(\alpha^3).
\]
Normalization $\int f_\alpha^p\dd\mu=1$ yields
\begin{equation}\label{quad:eq:normal}
 \int u\dd\mu=0,\qquad
 \int z\dd\mu=-\frac{p-1}{2}\int u^2\dd\mu.
\end{equation}
The first-order part of $R_\alpha f_\alpha=D_\alpha f_\alpha^{p-1}$ is
$s(x)=D_1+(p-1)u(x)$. Integration and \eqref{quad:eq:normal} imply
\begin{equation}\label{quad:eq:u}
 D_1=a,\qquad u=(m-1)(s-a).
\end{equation}
Expand the objective itself, with independent $X,Y$:
\begin{align*}
 D_\alpha
 &=\E\left[e^{\alpha \ell(X,Y)}f_\alpha(X)f_\alpha(Y)\right]\\
 &=1+a\alpha+\alpha^2\left(
     \frac b2+2\int su\dd\mu+2\int z\dd\mu
       +\left(\int u\dd\mu\right)^2\right)+O(\alpha^3).
\end{align*}
Using \eqref{quad:eq:normal}--\eqref{quad:eq:u} gives
\begin{equation}\label{quad:eq:Dexp}
 D_\alpha=1+a\alpha+
 \left[\frac b2+(m-1)v\right]\alpha^2+O(\alpha^3).
\end{equation}
Similarly, from \eqref{quad:eq:Z},
\begin{equation}\label{quad:eq:Zexp}
 Z_m(\alpha,d)=1+ma\alpha+\frac{m^2b}{2}\alpha^2+O(\alpha^3).
\end{equation}
Taking logarithms in \eqref{quad:eq:exact} now gives
\begin{align*}
 \log\big(\sqrt{m!}\Lambda_m\big)
 &=\frac m2\log D_\alpha-\frac12\log Z_m\\
 &=\left\{\frac m2\left[\frac b2+(m-1)v-\frac{a^2}2\right]
               -\frac{m^2}{4}(b-a^2)\right\}\alpha^2+O(\alpha^3)\\
 &=-\frac{m(m-1)}4(b-a^2-2v)\alpha^2+O(\alpha^3).
\end{align*}
The linear term cancels exactly. Exponentiation proves \eqref{quad:eq:weighted}.
\end{proof}

\begin{remark}
$V_d>0$. Otherwise $\ell(x,y)=s(x)+s(y)-a$ almost everywhere.
On any rectangle separated from the diagonal this would force the mixed
distributional derivative of $\ell$ to vanish, whereas
$\partial_x\partial_y[-\log|x-y|]=-(x-y)^{-2}\ne0$.
Thus the quadratic correction agrees with the strict upper bound
$\Lambda_m<1/\sqrt{m!}$ in Theorem~\ref{thm:finite-bounds}.
\end{remark}

\subsubsection{Evaluation for every real weight exponent}
\begin{proposition}\label{prop:weighted-quadratic}
For $d<1$,
\[
 V_d=\frac1{12(1-d)^2}+\frac{\pi^2}{9}.
\]
In particular, for fixed $m\ge2$ and $d<1/2$, the coefficient of
$-\alpha^2$ in $\sqrt{m!}\Lambda_m$ is
\[
 C_{2,m,d}=m(m-1)\left[\frac1{48(1-d)^2}+\frac{\pi^2}{36}\right].
\]
\end{proposition}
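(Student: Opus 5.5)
The plan is to compute the three moments $a$, $b-a^2=\Var\ell(X,Y)$ and $v=\Var s(X)$ in closed form, using the scaling structure of $\mu_d(\dd x)=\eta x^{\eta-1}\dd x$. The identity $V_d=b-a^2-2v$ from \eqref{quad:eq:V} then finishes the argument. The second assertion is immediate: substitute the value of $V_d$ into Theorem~\ref{quad:thm:expansion}, noting that $\frac{m(m-1)}4V_d$ equals $m(m-1)\bigl[\frac1{48\eta^2}+\frac{\pi^2}{36}\bigr]$.

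\emph{Step 1: decomposition of the pair.} Let $X,Y$ be independent with law $\mu_d$, and put $M=\max(X,Y)$ and $T=\min(X,Y)/M$. A direct change of variables $(x,y)\mapsto(M,T)$ shows the following.
\begin{itemize}
\item $M$ and $T$ are independent.
\item $M$ has density $2\eta x^{2\eta-1}$.
\item $T$ has density $\eta t^{\eta-1}$ on $(0,1)$.
\end{itemize}
Since $\ell(X,Y)=-\log M-\log(1-T)$, and $-\log M$ is exponential with rate $2\eta$, we get
\[
 a=\frac1{2\eta}+\E[-\log(1-T)],\qquad
 b-a^2=\frac1{4\eta^2}+\Var\log(1-T).
\]
Expanding $-\log(1-t)=\sum_{k\ge1}t^k/k$ gives
\[
 \E[-\log(1-T)]=\sum_{k\ge1}\frac{\eta}{k(k+\eta)}=\psi(1+\eta)+\gamma_E.
\]
Similarly, $\E[\log^2(1-T)]$ is a trigamma-type expression, which I would obtain via $\partial_\lambda^2$ of $\E(1-T)^\lambda=\Gamma(1+\eta)\Gamma(1+\lambda)/\Gamma(1+\eta+\lambda)$ at $\lambda=0$. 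This yields
\[
 \Var\log(1-T)=\psi'(1)-\psi'(1+\eta)+\bigl(\text{terms from }\psi''?\bigr),
\]
and I will track it exactly.

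\emph{Step 2: the row moment $s(x)$ and $v$.} Split $s(x)$ at $y=x$. For $y<x$, substituting $y=xt$ gives $x^\eta\bigl[-\log x+\E(-\log(1-T))\bigr]$. For $y>x$, write $-\log|x-y|=-\log y-\log(1-x/y)$ and expand in the series $\sum_k (x/y)^k/k$. Each resulting term is an explicit combination of $x^\eta\log x$, $x^\eta$, $1$ and powers $x^k$. Then $\E s(X)=a$, and $\E s(X)^2$ reduces to one-dimensional integrals against $\eta x^{\eta-1}$. These integrals are double series of rational functions of $k,j,\eta$. Rather than summing blindly, I would compute $\E s(X)^2=\E[\ell(X,Y)\ell(X,Y')]$ with $Y,Y'$ independent. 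I would order the triple $(X,Y,Y')$ and use the same max/ratio independence for three points, so that the scale variable again contributes an exponential part.

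\emph{Step 3: cancellation.} Assemble $b-a^2-2v$. The scale contributions should produce exactly $\frac1{4\eta^2}-\frac{2}{\cdot}(\ldots)$, netting $\frac1{12\eta^2}$. All digamma/trigamma terms in $\eta$ coming from the ratio variables should cancel, leaving $\pi^2/9=\tfrac23\zeta(2)$. As a consistency check I will verify the case $d=0$ ($\eta=1$) directly, where $s(x)=1-x\log x-(1-x)\log(1-x)$ is explicit. The formula is stated for $d<1$ ($\eta>0$), and all moments are finite there. Since both sides are analytic in $\eta>0$, it also suffices to prove the identity for integer or large $\eta$ and extend analytically.

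The main obstacle is Step 2: organizing the computation of $v$ so that the $\eta$-dependent special-function terms visibly cancel against $\Var\log(1-T)$. I expect the three-point ordering argument to be the cleanest route. If it becomes unwieldy, I would fall back on verifying that $\partial_\eta\bigl(V_d-\frac1{12\eta^2}\bigr)=0$ termwise in the series.
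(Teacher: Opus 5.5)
Your Step 1 and your deduction of the second assertion from Theorem~\ref{quad:thm:expansion} are fine. Step 1 is in fact exact, with no $\psi''$ terms: the cumulant generating function of $\log(1-T)$ is $\log\Gamma(1+\eta)+\log\Gamma(1+\lambda)-\log\Gamma(1+\eta+\lambda)$, so $b-a^2=\frac1{4\eta^2}+\psi_1(1)-\psi_1(1+\eta)$.

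\textbf{The gap is Step 2.} Given Step 1, the proposition is equivalent to $v=\frac1{12\eta^2}+\frac{\pi^2}{36}-\frac12\psi_1(1+\eta)$. Nothing in your plan produces this value; Step 3 only asserts that the terms should cancel.

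The three-point ordering does not close the computation. Suppose $Y$ is the largest of the three points. After removing $-\log\max$, you still need $\E[\log(1-T_1)\log|T_1-T_2|]$ for independent $T_1,T_2\sim\mu_d$. This equals $-\E[\log(1-T_1)\,s(T_1)]$, a row moment of the same type and difficulty as $v$ itself. The relevant two-point weight $t_1^{\eta-1}t_2^{\eta-1}(1-t_1)^{\lambda}|t_1-t_2|^{\kappa}$ is no longer a pure power, so scaling no longer separates the variables. Iterating the max/ratio split leads to parameter-dependent Euler sums such as $\sum_{k\ge1}\frac{2\eta^2}{k(k+\eta)(k+2\eta)}\bigl[\psi(k+\eta+1)-\psi(1)\bigr]$, and you give no closed evaluation of these.

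Your shortcut via integer $\eta$ is also invalid as stated. Analytic functions on $\eta>0$ that agree at the integers can differ by a multiple of $\sin(\pi\eta)$, so you would need a Carlson-type growth argument. Agreement on an interval would suffice, but proving that is no easier than the general case. The fallback of checking $\partial_\eta\bigl(V_d-\frac1{12\eta^2}\bigr)=0$ termwise presupposes a series for $v$ that you have not derived.

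The idea that removes the obstacle is never to compute $v$ separately. Write $A=b-a^2$. By exchangeability, $\Var(\ell_{12}+\ell_{13}+\ell_{23})=3A+6v$, hence $V_d=A-2v=2A-\frac13\Var(\ell_{12}+\ell_{13}+\ell_{23})$. By the Selberg integral, $\E e^{t(\ell_{12}+\ell_{13}+\ell_{23})}=\eta^3S_3(\eta,1,-t/2)$ is an explicit gamma product, valid near $t=0$. Your Step 1 is exactly the $n=2$ case of this formula. Two derivatives at $t=0$ give $\Var(\ell_{12}+\ell_{13}+\ell_{23})=\frac14\bigl[5\psi_1(\eta)+16\psi_1(1)-29\psi_1(\eta+1)\bigr]$. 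Combined with your value of $A$, this gives $V_d=\frac1{12}\bigl[\psi_1(\eta)+8\psi_1(1)-\psi_1(\eta+1)\bigr]$. Using $\psi_1(\eta)-\psi_1(\eta+1)=\eta^{-2}$ and $\psi_1(1)=\pi^2/6$, this equals $\frac1{12\eta^2}+\frac{\pi^2}{9}$.
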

\begin{proof}
Let $\eta=1-d>0$, take independent $X_i$ with density
$\eta x^{\eta-1}$, and put $\ell_{ij}=-\log|X_i-X_j|$.
Write $A=\Var(\ell_{12})$ and
$v=\operatorname{Cov}(\ell_{12},\ell_{13})=\Var(s(X_1))$.
Exchangeability and conditional independence give
\[
 V_d=A-2v=2A-\frac13\Var(\ell_{12}+\ell_{13}+\ell_{23}).
\]
For $n=2,3$, the cumulant generating function of
$\sum_{i<j}\ell_{ij}$ is
\[
 g_n(t)=\log\{\eta^n S_n(\eta,1,-t/2)\},
\]
where the Selberg integral is
\begin{equation}\label{eq:selberg-general}
 S_n(a,b,c)=\prod_{j=0}^{n-1}
 \frac{\Gamma(a+jc)\Gamma(b+jc)\Gamma(1+(j+1)c)}
 {\Gamma(a+b+(n+j-1)c)\Gamma(1+c)}.
\end{equation}
It is valid in a neighborhood of $t=0$; see \cite[Eq.~5.14.4]{DLMF}.
Differentiation is justified by the finite exponential moments there.
Twice differentiating the gamma product gives
\[
 A=\frac{\psi_1(\eta)+4\psi_1(1)-5\psi_1(\eta+1)}4,
\]
\[
 g_3''(0)=\frac{5\psi_1(\eta)+16\psi_1(1)-29\psi_1(\eta+1)}4.
\]
Thus $V_d=2A-g_3''(0)/3$. Use
$\psi_1(\eta)-\psi_1(\eta+1)=\eta^{-2}$ and
$\psi_1(1)=\pi^2/6$ to obtain the stated expression.
Theorem~\ref{quad:thm:expansion} gives the coefficient.
\end{proof}

\begin{corollary}\label{cor:local-weight-order}
If $d_1<d_2<1/2$ and $m\ge2$ are fixed, then
$\Lambda_m(\alpha,d_1)>\Lambda_m(\alpha,d_2)$ for all sufficiently
small positive $\alpha$.
\end{corollary}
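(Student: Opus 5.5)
The plan is to read the corollary directly off Theorem~\ref{quad:thm:expansion} together with Proposition~\ref{prop:weighted-quadratic}. Fix $m\ge2$ and $d_1<d_2<1/2$. For each $i$, the theorem gives
\[
 \sqrt{m!}\,\Lambda_m(\alpha,d_i)=1-C_{2,m,d_i}\alpha^2+O_{m,d_i}(\alpha^3),
 \qquad \alpha\downarrow0,
\]
and the remainder constant depends only on $(m,d_i)$. Both $d_i$ are fixed, so the two remainders together are $O(\alpha^3)$ with one constant.

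Subtracting the two expansions gives
\[
 \sqrt{m!}\bigl[\Lambda_m(\alpha,d_1)-\Lambda_m(\alpha,d_2)\bigr]
 =(C_{2,m,d_2}-C_{2,m,d_1})\alpha^2+O(\alpha^3).
\]
By Proposition~\ref{prop:weighted-quadratic}, this coefficient equals
\[
 \frac{m(m-1)}{48}\left[(1-d_2)^{-2}-(1-d_1)^{-2}\right].
\]
Since $d_1<d_2<1/2$, we have $1-d_1>1-d_2>1/2>0$, so the bracket is strictly positive. The leading $\alpha^2$ term therefore dominates the $O(\alpha^3)$ error for all sufficiently small $\alpha>0$, and the difference is positive.

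One detail must be checked: both pairs $(\alpha,d_i)$ must be admissible, that is, $2d_i+m\alpha<1$. This holds for small $\alpha$ because $d_i<1/2$. I expect no step to be a real obstacle; the only care needed is that the remainder in Theorem~\ref{quad:thm:expansion} is uniform in $\alpha$ for each fixed $d$, and the theorem states exactly that.
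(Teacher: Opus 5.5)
Your proof is correct and matches the paper's argument. The paper also observes that the quadratic coefficient $C_{2,m,d}$ from Proposition~\ref{prop:weighted-quadratic} is strictly increasing in $d$, and that this fixed strict gap dominates the $O(\alpha^3)$ remainders in Theorem~\ref{quad:thm:expansion} as $\alpha\downarrow0$.
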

\begin{proof}
The quadratic coefficient is strictly increasing in $d$. Its fixed,
strict difference dominates the $O(\alpha^3)$ remainders as
$\alpha\downarrow0$.
\end{proof}
The moment identity for $V_d$ holds throughout $d<1$; its use
in the LIL expansion is subject to $d<1/2$.

\subsection{A cubic formula for a general fixed weight}\label{sec:cubic}

Write $D_\alpha=D_m(\alpha,d)$, $r=m-1$ and $q=p-1=1/r$.
All expectations in this section use independent $X,Y\sim\mu_d$. Define
\begin{gather}
 a=\E \ell(X,Y),\quad b=\E \ell(X,Y)^2,\quad c=\E \ell(X,Y)^3,\notag\\
 s(x)=\E[\ell(x,Y)],\quad t(x)=s(x)-a,\quad
 v=\E t(X)^2,\quad M=\E t(X)^3,\label{cubic:eq:moments}\\
 C=\E[t(X)\ell(X,Y)^2],\qquad
 Q=\E[t(X)\ell(X,Y)t(Y)],\qquad
 \kappa_3=c-3ab+2a^3.\notag
\end{gather}
These quantities are finite; $Q$ denotes the scalar logarithmic
moment defined above.

\begin{lemma}\label{cubic:lem:cubic}
The expansion of the global maximum in \eqref{quad:eq:D} is
\begin{align}
 D_\alpha&=1+a\alpha+\left(\frac b2+rv\right)\alpha^2
                         +D_3\alpha^3+O(\alpha^4),\label{cubic:eq:Dexp}\\
 D_3&=\frac c6+rC+r^2Q-arv+\frac{r(r-1)}3M.\label{cubic:eq:D3}
\end{align}
Consequently,
\begin{align}
 \log(\sqrt{m!}\Lambda_m(\alpha,d))
  &=-\frac{m(m-1)}4(b-a^2-2v)\alpha^2
       +G_{3,m,d}\alpha^3+O(\alpha^4),\label{cubic:eq:logexp}\\
 G_{3,m,d}
  &=-\frac{m(m^2-1)}{12}\kappa_3
    +\frac{mr}{2}\left(C-2av+rQ+\frac{r-1}{3}M\right).
       \label{cubic:eq:generalC3}
\end{align}
The coefficient of $\alpha^3$ is the same before taking the logarithm,
because the linear coefficient vanishes.
\end{lemma}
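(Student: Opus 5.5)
We will extract the cubic coefficient from the real-analytic branch of Lemma~\ref{quad:lem:branch} by a \emph{stationarity shortcut}. We will not solve the Euler equation \eqref{quad:eq:Euler} to second order. Write the global maximizer as $f_\alpha=1+\alpha u+\alpha^2 z+O(\alpha^3)$ in $C[0,1]$, with $u=r\,t$ already identified in \eqref{quad:eq:u}. Consider the normalized trial function $\tilde f_\alpha=(1+\alpha u)/\|1+\alpha u\|_{L^p(\mu)}$.

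The functional $\Phi(\alpha,f)=B_\alpha(f,f)/\|f\|_p^2$ is smooth in $f$ on a $C[0,1]$-neighbourhood of $1$. Its second derivative in $f$ is bounded uniformly for small $\alpha$, by \eqref{quad:eq:opsmall} and the analyticity of $R_\alpha$ on $C[0,1]$. Moreover $f_\alpha$ is a critical point of $\Phi(\alpha,\cdot)$. Since $\tilde f_\alpha-f_\alpha=O(\alpha^2)$ up to a scalar multiple, and scalar multiples do not change $\Phi$, a second-order Taylor expansion about $f_\alpha$ gives
\[
 D_\alpha=\Phi(\alpha,\tilde f_\alpha)+O(\alpha^4).
\]
This is the step needing the most care: it requires uniformity of the Hessian bound and the fact that homogeneity removes the normalization direction. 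I would justify it by restricting to the analytic branch and working in $C[0,1]$, where all maps are analytic.

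Next comes the explicit expansion of $\Phi(\alpha,\tilde f_\alpha)$ to order $\alpha^3$. For the numerator, expand $\E[e^{\alpha\ell(X,Y)}(1+\alpha u(X))(1+\alpha u(Y))]$ in powers of $\alpha$ and use $\E[\ell(X,Y)\mid X]=s(X)=a+t(X)$ and $\E u=0$. This gives
\[
 1+a\alpha+\Bigl(\tfrac b2+2rv\Bigr)\alpha^2
 +\Bigl(\tfrac c6+rC+r^2Q\Bigr)\alpha^3,
\]
since the $\alpha^3$ terms are $\tfrac16\E\ell^3$, $2\cdot\tfrac12\E[u(X)\ell^2]=rC$, and $\E[u(X)\ell u(Y)]=r^2Q$. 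For the denominator, $\int(1+\alpha u)^p\dd\mu=1+\tfrac{p(p-1)}2r^2v\alpha^2+\tfrac{p(p-1)(p-2)}6r^3M\alpha^3+O(\alpha^4)$. Raise this to the power $-2/p$ and use $p-1=1/r$ and $p-2=(1-r)/r$. The denominator factor is then $1-rv\alpha^2+\tfrac{r(r-1)}3M\alpha^3$. Multiplying the two expansions, the cross term $-rv\alpha^2\cdot a\alpha$ produces $-arv$. This yields \eqref{cubic:eq:Dexp}--\eqref{cubic:eq:D3} and recovers the quadratic coefficient of \eqref{quad:eq:Dexp} as a consistency check.

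Finally, insert the result into \eqref{quad:eq:exact}, written as $\log(\sqrt{m!}\Lambda_m)=\tfrac m2\log D_\alpha-\tfrac12\log Z_m$. Here $\log Z_m=\log\E e^{m\alpha\ell}$ is a cumulant generating function, so
\[
 \log Z_m=ma\alpha+\tfrac{m^2}{2}(b-a^2)\alpha^2+\tfrac{m^3}{6}\kappa_3\alpha^3+O(\alpha^4).
\]
Expanding $\log D_\alpha$ to third order, with cubic term $D_3-D_1D_2+D_1^3/3$, and collecting terms should produce \eqref{cubic:eq:generalC3}. The quadratic part is already known from Theorem~\ref{quad:thm:expansion}. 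The pure moment terms combine into $\tfrac m2\cdot\tfrac{\kappa_3}{6}-\tfrac{m^3}{12}\kappa_3=-\tfrac{m(m^2-1)}{12}\kappa_3$. The remaining terms give $\tfrac{mr}{2}(C-2av+rQ+\tfrac{r-1}3M)$: the $-arv$ from $D_3$ and the $-a\cdot rv$ from $-D_1D_2$ together supply the $-2av$. Since the linear coefficient is zero, exponentiating leaves the $\alpha^3$ coefficient unchanged, which gives the final remark of the lemma.
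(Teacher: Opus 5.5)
Your argument is correct, but it obtains $D_3$ by a different mechanism from the paper.

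The paper's route is a direct perturbative expansion.
\begin{itemize}
\item It expands the maximizer to third order, $f_\alpha=1+\alpha u+\alpha^2z+\alpha^3h+O(\alpha^4)$.
\item It uses the normalization identities $\E u=0$, $\E z=-\tfrac q2\E u^2$ and $\E h=-q\E(uz)-\tfrac{q(q-1)}6\E u^3$.
\item It then uses the first-order Euler relation $s=a+qu$. This cancels the unknown $\E(uz)$ between the terms $2\E(sz)$ and $2\E h$ of the cubic objective coefficient, leaving $2a\E z-\tfrac{q(q-1)}3\E u^3=-arv+\tfrac{r(r-1)}3M$.
\end{itemize}

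You instead use stationarity of the homogeneous quotient $\Phi(\alpha,\cdot)$ at $f_\alpha$, which is Wigner's ``$2n+1$'' rule. You evaluate $\Phi$ at the first-order trial $1+\alpha u$, and the explicit denominator $\|1+\alpha u\|_p^{-2}$ supplies $-rv\alpha^2+\tfrac{r(r-1)}3M\alpha^3$. Your numerator and denominator expansions do multiply out to exactly \eqref{cubic:eq:Dexp}--\eqref{cubic:eq:D3}. Your logarithmic step is the same as the paper's.

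The paper's cancellation of $\E(uz)$ is your stationarity written out in coordinates, so both routes rest on the same fact.

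What your route needs, and what it gives:
\begin{itemize}
\item You need only $f_\alpha-(1+\alpha u)=O(\alpha^2)$ in $C[0,1]$, which follows from Lemma~\ref{quad:lem:branch}.
\item You also need a uniform second-variation bound for $\Phi(\alpha,\cdot)$ on a $C[0,1]$-ball around $1$. There $f\ge1/2$, so $f\mapsto\int f^p\dd\mu$ is smooth, and $R_\alpha$ is uniformly bounded by \eqref{quad:eq:opsmall}.
\item It explains why no second-order corrector is needed.
\item It gives the exact one-sided bound $D_\alpha\ge\Phi(\alpha,1+\alpha u)$ for free, by maximality.
\item It recovers the quadratic coefficient in the same computation.
\end{itemize}

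The paper's route instead needs the full third-order Taylor expansion of the analytic branch and the third-order normalization identity. After that it is pure bookkeeping, with no analytic estimate beyond the branch's analyticity.
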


\begin{proof}
Write $f_\alpha=1+\alpha u+\alpha^2z+\alpha^3h+O(\alpha^4)$ in
$C[0,1]$. Expansion of the normalization gives
\begin{align*}
 \E u&=0,\qquad \E z=-\frac q2\E u^2,\\
 \E h&=-q\E(uz)-\frac{q(q-1)}6\E u^3.
\end{align*}
The first-order Euler equation yields $u=rt$ and $D_1=a$.
In the objective, the coefficient of order three is
\[
 \frac c6+\E[u(X)\ell(X,Y)^2]
 +\E[u(X)\ell(X,Y)u(Y)]+2\E(sz)+2\E h.
\]
Since $s-qu=a$, the last two terms equal
\[
 2a\E z-\frac{q(q-1)}3\E u^3
 =-arv+\frac{r(r-1)}3M.
\]
This proves \eqref{cubic:eq:D3}; the second-order term is $b/2+rv$.
Now
\[
 [\alpha^3]\log D_\alpha=D_3-a(b/2+rv)+a^3/3,
 \qquad [\alpha^3]\log Z_m=m^3\kappa_3/6.
\]
Substitution into $(m/2)\log D_\alpha-(1/2)\log Z_m$ gives
\eqref{cubic:eq:generalC3}.
\end{proof}

\subsubsection{Connected graphs and explicit rank dependence}
Let $e_{ij}=\ell_{ij}-a$ for independent $X_i\sim\mu_d$, and define
\[
 K=\E e_{12}^3,\quad B_\star=\E(e_{12}^2e_{13}),\quad
 T_\triangle=\E(e_{12}e_{13}e_{23}),\quad
 M=\E(e_{12}e_{13}e_{14}),\quad
 P=\E(e_{12}e_{23}e_{34}).
\]
Here $K=\kappa_3$, $B_\star=C-2av$, and $P=Q$ in
Lemma~\ref{cubic:lem:cubic}. The notation $B_\star$ distinguishes this
moment from the beta function. Conditional expectation verifies these
identities, and also $M=\E t(X)^3$.

\begin{proposition}\label{prop:cubic-rank}
For fixed $m\ge2$ and $d<1/2$, write
\[
 \sqrt{m!}\Lambda_m=1-C_{2,m,d}\alpha^2-C_{3,m,d}\alpha^3
                  +O(\alpha^4),\qquad \eta=1-d.
\]
Then
\begin{equation}\label{eq:cubic-affine}
 C_{3,m,d}=m(m-1)(m\cA_d+\cB_d),
\end{equation}
where
\[
 \cA_d=\frac{\psi_2(\eta)-7\psi_2(1)+3\eta^{-3}}{96}>0,
 \qquad
 \cB_d=\frac K{12}+\frac{X_\eta}{6}-\frac{B_\star}{2}+\frac M6,
\]
and
\begin{equation}\label{eq:graph-polygamma}
 K=\psi_2(\eta)-\psi_2(1)+\frac9{4\eta^3},\qquad
 X_\eta:=M+3P=
 \frac{7\psi_2(\eta)-\psi_2(1)+15\eta^{-3}}{16}.
\end{equation}
Consequently
\[
 C_{3,m,d}=\frac{m(m-1)}2C_{3,2,d}
              +m(m-1)(m-2)\cA_d.
\]
The notation $C_{3,m,d}=-G_{3,m,d}$ fixes the sign convention.
\end{proposition}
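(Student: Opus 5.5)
The plan is to start from the general cubic formula of Lemma~\ref{cubic:lem:cubic}, rewrite it in terms of the connected-graph moments, read off the affine dependence on $m$, and then evaluate the two needed moment combinations through Selberg cumulants, exactly as in Proposition~\ref{prop:weighted-quadratic}.

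\emph{Step 1 (sign convention and reduction).} Because the linear coefficient vanishes, exponentiating \eqref{cubic:eq:logexp} gives $\sqrt{m!}\Lambda_m=1-C_{2,m,d}\alpha^2+G_{3,m,d}\alpha^3+O(\alpha^4)$. Hence $C_{3,m,d}=-G_{3,m,d}$. Substitute $K=\kappa_3$, $B_\star=C-2av$ and $P=Q$ into \eqref{cubic:eq:generalC3}, with $r=m-1$. Since $m(m^2-1)/12=m(m-1)(m+1)/12$, this gives
\[
 C_{3,m,d}=m(m-1)\Bigl[\tfrac{(m+1)K}{12}-\tfrac{B_\star}{2}-\tfrac{(m-1)P}{2}-\tfrac{(m-2)M}{6}\Bigr].
\]
Splitting the bracket into its coefficient of $m$ and its constant part gives $m\cA_d+\cB_d$, with
\[
 \cA_d=\tfrac K{12}-\tfrac{M+3P}{6}=\tfrac K{12}-\tfrac{X_\eta}6,
 \qquad
 \cB_d=\tfrac K{12}-\tfrac{B_\star}2+\tfrac P2+\tfrac M3=\tfrac K{12}+\tfrac{X_\eta}6-\tfrac{B_\star}2+\tfrac M6 .
\]
The final identity is then pure algebra. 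Indeed $\tfrac{m(m-1)}2C_{3,2,d}=m(m-1)(2\cA_d+\cB_d)$, and adding $m(m-1)(m-2)\cA_d$ returns $m(m-1)(m\cA_d+\cB_d)$.

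\emph{Step 2 (graph cumulants from Selberg).} Let $S_n=\sum_{i<j\le n}\ell_{ij}$. As in Proposition~\ref{prop:weighted-quadratic}, its cumulant generating function is $g_n(t)=\log\{\eta^nS_n(\eta,1,-t/2)\}$, with $S_n$ given by \eqref{eq:selberg-general}. Differentiation three times at $t=0$ is justified by finite exponential moments. Expand $\E(S_n-\binom n2a)^3=\sum e_{ij}e_{kl}e_{pq}$ over ordered triples of edges. Terms containing an edge disjoint from the others vanish by independence and centering. Counting graph shapes gives
\[
 g_2'''(0)=K,\qquad g_3'''(0)=3K+18B_\star+6T_\triangle,
\]
\[
 g_4'''(0)=6K+72B_\star+24T_\triangle+24M+72P.
\]
The $g_4'''$ count uses the $20$ three-edge subsets of $K_4$: $4$ triangles, $4$ stars and $12$ paths. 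Therefore
\[
 24X_\eta=g_4'''(0)-4g_3'''(0)+6g_2'''(0),
\]
which eliminates both $B_\star$ and $T_\triangle$. Differentiating the gamma products three times produces factors $(-1/2)^3$ times sums of $\psi_2$ at $\eta$, $1$ and $\eta+1$. Then use $\psi_2(\eta+1)=\psi_2(\eta)+2\eta^{-3}$ to reach \eqref{eq:graph-polygamma}. Subtracting gives
\[
 \cA_d=\bigl(8\psi_2(\eta)-8\psi_2(1)+18\eta^{-3}-7\psi_2(\eta)+\psi_2(1)-15\eta^{-3}\bigr)/96,
\]
which is the stated expression.

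\emph{Step 3 (positivity of $\cA_d$).} We have $-\psi_2(\eta)=2\eta^{-3}-\psi_2(\eta+1)$. Since $-\psi_2$ is decreasing and $\eta+1>1$, this gives $-\psi_2(\eta)\le2\eta^{-3}+2\zeta(3)$. Using $-\psi_2(1)=2\zeta(3)$, it follows that $96\cA_d\ge-2\eta^{-3}-2\zeta(3)+14\zeta(3)+3\eta^{-3}>0$.

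The main obstacle is Step 2. Both the combinatorial shape counts for $n=4$ and the bookkeeping of the triple gamma-product derivatives (with $n$-dependent arguments $a+b+(n+j-1)c$) are error-prone. I would check the two resulting closed forms against the already proved second-order formula, by confirming the analogous $\psi_1$ computation, and also numerically at $\eta=1$.
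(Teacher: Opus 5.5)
Your proposal is correct and follows essentially the paper's route. Both arguments rewrite Lemma~\ref{cubic:lem:cubic} in the graph moments $K,B_\star,P,M$, count edge-triple shapes in the Selberg third cumulants $g_n'''(0)$, eliminate $B_\star$ and $T_\triangle$ via $g_4'''-4g_3'''+6g_2'''$, and read off the affine dependence on $m$. The only deviation is the positivity of $\cA_d$. You use $\psi_2(\eta+1)=\psi_2(\eta)+2\eta^{-3}$ and monotonicity of $-\psi_2$, which works for all $\eta>0$ and gives $96\cA_d\ge\eta^{-3}+12\zeta(3)$. The paper instead compares with $\psi_2(1/2)=7\psi_2(1)$, which uses $\eta>1/2$.
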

\begin{proof}
Use $g_n$ from the preceding Selberg calculation. Expanding its third
derivative into ordered edge triples gives
\[
 g_n'''(0)=\binom n2K+18\binom n3B_\star+6\binom n3T_\triangle
                  +24\binom n4M+72\binom n4P.
\]
Every disconnected centered triple has zero expectation. A repeated
edge contributes $K$. On three vertices there are six choices of
repeated edge and adjacent edge, each with three orderings, and one
triangle with six orderings. On four vertices there are four stars
and twelve paths, each with six orderings. These counts give the
displayed coefficients. Therefore
\[
 K=g_2'''(0),\qquad
 M+3P=\frac{g_4'''(0)-4g_3'''(0)+6g_2'''(0)}{24}.
\]
Three derivatives of \eqref{eq:selberg-general} yield
\[
 g_n'''(0)=-\frac18\sum_{j=0}^{n-1}
 \left[j^3\psi_2(\eta)
 +(j^3+(j+1)^3-1)\psi_2(1)
 -(n+j-1)^3\psi_2(\eta+1)\right].
\]
Using $\psi_2(\eta+1)=\psi_2(\eta)+2\eta^{-3}$ proves
\eqref{eq:graph-polygamma}. Lemma~\ref{cubic:lem:cubic} becomes
\[
 \frac{C_{3,m,d}}{m(m-1)}=
 \frac{m+1}{12}K-\frac12\left[B_\star+(m-1)P+\frac{m-2}{3}M\right].
\]
Substitute $P=(X_\eta-M)/3$. The coefficient of $m$ is
$K/12-X_\eta/6=\cA_d$, and the remaining term is $\cB_d$.
Evaluating this affine expression at $m=2$ proves the rank identity.
Finally $\eta>1/2$, $\psi_2$ is strictly increasing, and
$\psi_2(\eta)>\psi_2(1/2)=7\psi_2(1)$. The first two terms in the
numerator of $\cA_d$ therefore have positive sum, and $3\eta^{-3}>0$;
hence $\cA_d>0$.
\end{proof}

\subsection{The unweighted coefficients and the high-rank formula}
\begin{theorem}\label{main:thm:main}
For each fixed integer $m\ge2$, as $\alpha\downarrow0$,
\begin{equation}\label{main:eq:main}
 \boxed{\displaystyle
 \Lambda_m(\alpha,0)=\frac1{\sqrt{m!}}
 \left[1-C_{2,m}\alpha^2-C_{3,m}\alpha^3+O(\alpha^4)\right],}
\end{equation}
where
\begin{align}
 C_{2,m}&=m(m-1)\left(\frac1{48}+\frac{\pi^2}{36}\right),\label{main:eq:C2}\\
 C_{3,m}&=m(m-1)\left[
 \frac{9m-1}{288}+\frac{7\pi^2}{432}
                  +\frac{3m+1}{24}\zeta(3)\right].\label{main:eq:C3}
\end{align}
Both coefficients are strictly positive. The remainder is
$O_m(\alpha^4)$ for fixed $m$.
\end{theorem}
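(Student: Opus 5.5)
The plan is to obtain Theorem~\ref{main:thm:main} by specializing the weighted results of Sections~\ref{sec:quadratic}--\ref{sec:cubic} to $d=0$, so $\eta=1$ and $\mu_0$ is Lebesgue measure on $[0,1]$. The form of the expansion and the $O_m(\alpha^4)$ remainder come directly from Lemma~\ref{cubic:lem:cubic}. That lemma rests on the analytic branch of Lemma~\ref{quad:lem:branch}, which is valid at $d=0$. For the quadratic coefficient I would set $\eta=1$ in Proposition~\ref{prop:weighted-quadratic}. This gives $V_0=1/12+\pi^2/9$ and hence $C_{2,m}=m(m-1)(1/48+\pi^2/36)$, which is \eqref{main:eq:C2}.

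For the cubic coefficient I would use the affine rank formula \eqref{eq:cubic-affine}, namely $C_{3,m,0}=m(m-1)(m\cA_0+\cB_0)$. At $\eta=1$, with $\psi_2(1)=-2\zeta(3)$, the formulas in Proposition~\ref{prop:cubic-rank} give the following values.
\begin{itemize}
\item $\cA_0=(3-6\psi_2(1))/96=1/32+\zeta(3)/8$. This matches the coefficient of $m$ in \eqref{main:eq:C3}, since $9/288=1/32$ and $3/24=1/8$.
\item $K=9/4$.
\item $X_1=(6\psi_2(1)+15)/16=(15-12\zeta(3))/16$.
\end{itemize}
It then remains to show that
\[
 \cB_0=\frac{K}{12}+\frac{X_1}{6}-\frac{B_\star}{2}+\frac M6
 =-\frac1{288}+\frac{7\pi^2}{432}+\frac{\zeta(3)}{24}.
\]
The first two terms already give $3/16+(15-12\zeta(3))/96$. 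So the task reduces to computing the two remaining graph moments $B_\star=C-2av$ and $M=\E t(X)^3$ for the uniform measure.

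To compute these I would use the explicit row moment $s(x)=1-x\log x-(1-x)\log(1-x)$. Together with $a=3/2$, this gives $t(x)=-\tfrac12-x\log x-(1-x)\log(1-x)$. The variance $v$ is also available, from $V_0=b-a^2-2v$ together with $b=\E\ell^2=7/2$.
\begin{itemize}
\item $M$ is a one-dimensional integral of a cubic polynomial in $x\log x$ and $(1-x)\log(1-x)$. I would expand it and reduce it to the integrals $\int_0^1 x^j\log^k x\,dx$ and the mixed integrals $\int_0^1 x^i(1-x)^j\log^kx\,\log^l(1-x)\,dx$. These are derivatives of the beta function at integer points, so they produce $\zeta(2)$ and $\zeta(3)$.
\item For $C=\E[t(X)\ell(X,Y)^2]$ I would first compute the second row moment $\int_0^1\log^2|x-y|\,dy$. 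It equals $x\log^2x-2x\log x+2x$ plus the same expression in $1-x$. I would then integrate it against $t(x)$, again reducing everything to beta-derivative integrals.
\end{itemize}
An independent check is available through the identity $g_3'''(0)=3K+18B_\star+6T_\triangle$, with $T_\triangle$ computed separately.

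I expect this explicit evaluation of $B_\star$ and $M$ to be the main obstacle. It is routine but error-prone bookkeeping of $\zeta(2)$, $\zeta(3)$ and rational constants.

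Once $\cB_0$ is confirmed, \eqref{main:eq:C3} follows by collecting terms. Positivity is then immediate. $C_{2,m}>0$ trivially. For $m\ge2$ every bracketed term in \eqref{main:eq:C3} is positive, because $9m-1>0$, $3m+1>0$ and $\zeta(3)>0$. Alternatively, $C_{3,m}>0$ also follows from $\cA_0>0$ and the rank identity, provided $C_{3,2,0}>0$.
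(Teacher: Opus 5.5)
Your proposal is correct, and for the cubic coefficient it takes a shorter route than the paper. The paper evaluates every moment in Lemma~\ref{cubic:lem:cubic} directly at $d=0$, including the double logarithmic moment $Q$, and substitutes them into \eqref{cubic:eq:generalC3}. The value of $Q$ is the laborious part: Section S.2 of the Supplement needs the two-dimensional integrals $I_A,I_B$, harmonic-number series handled by partial fractions and summation by parts, and the Euler sum $\sum_k H_k/k^2=2\zeta(3)$. You instead specialize the affine rank identity of Proposition~\ref{prop:cubic-rank}, where the Selberg closed form $X_1=M+3P=(15-12\zeta(3))/16$ eliminates $P=Q$. This leaves only the one-dimensional beta-derivative moments $M$ and $B_\star=C-2av$. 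With $M=\tfrac43-\tfrac{\pi^2}{72}-\zeta(3)$ and $B_\star=\tfrac{41}{36}-\tfrac{\pi^2}{27}-\tfrac23\zeta(3)$ (from the paper's values of $C$ and $v$), your formula does give $\cB_0=-\tfrac1{288}+\tfrac{7\pi^2}{432}+\tfrac{\zeta(3)}{24}$, and $\cA_0=\tfrac1{32}+\tfrac{\zeta(3)}8$ matches the coefficient of $m$. Your route buys economy; the paper's route buys independence. The paper does not use the four-point Selberg third derivative for this theorem, and its direct $Q$ confirms that identity at $\eta=1$ (indeed $M+3Q=\tfrac{15}{16}-\tfrac34\zeta(3)=X_1$). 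This is what lets the paper present the fixed-rank cubic coefficients and the Taylor coefficient of $\log\Xi_0$ as matched by independent calculations. The optional $T_\triangle$ check you mention is unnecessary and involves a genuinely three-variable integral. Your positivity argument is fine, and the exponentiation step is covered because Proposition~\ref{prop:cubic-rank} is already stated for $\sqrt{m!}\Lambda_m$.
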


\begin{proof}
For $d=0$, the logarithmic moments in Lemma~\ref{cubic:lem:cubic} are
\[
 a=\frac32,\quad b=\frac72,\quad c=\frac{45}{4},\quad
 v=\frac7{12}-\frac{\pi^2}{18},
\]
\[
 C=\frac{26}{9}-\frac{11\pi^2}{54}-\frac23\zeta(3),\quad
 M=\frac43-\frac{\pi^2}{72}-\zeta(3),\quad
 Q=-\frac{19}{144}+\frac{\pi^2}{216}+\frac1{12}\zeta(3).
\]
Their evaluations follow by differentiating beta integrals and summing
partial fractions; the complete calculation is Section~S.2 of the
Supplement. Substitute these values into \eqref{cubic:eq:generalC3}
and use Proposition~\ref{prop:weighted-quadratic}. Since the logarithm
has no linear term, exponentiation preserves the quadratic and cubic
coefficients.
\end{proof}

\subsection{Coefficient matching at high rank}

\begin{corollary}\label{lr:cor:Taylor}
For each fixed $d<1/2$, as $s\downarrow0$ through the admissible region,
\begin{equation}\label{lr:eq:Taylor}
 \begin{split}
 \log\Xi_d(s)={}&-
 \left(\frac1{48\eta^2}+\frac{\pi^2}{36}\right)s^2\\
 &-\frac{\psi_2(\eta)-7\psi_2(1)+3\eta^{-3}}{96}s^3+O_d(s^4),
 \end{split}
\end{equation}
where $\psi_2=(\log\Gamma)'''$. In particular,
\[
 \log\Xi_0(s)=-\left(\frac1{48}+\frac{\pi^2}{36}\right)s^2
             -\left(\frac1{32}+\frac{\zeta(3)}8\right)s^3+O(s^4).
\]
\end{corollary}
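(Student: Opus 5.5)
The plan is to prove the expansion by direct Taylor expansion of the closed forms, with no appeal to the variational problem. By \eqref{lr:eq:Xi},
\[
 \log\Xi_d(s)=\Fm(s)-\tfrac12\log Z_d(s).
\]
Both terms are real-analytic in $s$ on a neighborhood of $s=0$. In \eqref{lr:eq:Fgamma} every gamma argument stays in a compact subset of $(0,\infty)$ for $|s|$ small, uniformly in $t\in[0,1]$. Likewise $Z_d(s)=2\eta^2\Gamma(\eta)\Gamma(1-s)/[(2\eta-s)\Gamma(\eta+1-s)]$ is analytic and positive near $0$. Hence the $O_d(s^4)$ remainder follows once the coefficients through order three are identified. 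The constant term is zero, since $\Fm(0)=\log\eta+\log\Gamma(\eta)-\log\Gamma(\eta+1)=0$ and $Z_d(0)=1$. The linear term vanishes, as already recorded after Proposition~\ref{lr:prop:Barnes}.

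For $\Fm$, I will use $\log\Gamma(z-\varepsilon)=\log\Gamma(z)-\psi(z)\varepsilon+\tfrac12\psi_1(z)\varepsilon^2-\tfrac16\psi_2(z)\varepsilon^3+O(\varepsilon^4)$ with $\varepsilon=st/2$ or $\varepsilon=s(1+t)/2$. I then integrate term by term, using $\int_0^1t^k\dd t=1/(k+1)$ and $\int_0^1(1+t)^k\dd t=(2^{k+1}-1)/(k+1)$. For $Z_d$, I expand $\log\Gamma(1-s)-\log\Gamma(\eta+1-s)$ and $-\log(1-s/(2\eta))$ in the same way.

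At order $s^2$ this gives the following coefficient of $\Fm$:
\[
 \tfrac1{24}\psi_1(\eta)+\tfrac1{12}\psi_1(1)-\tfrac7{24}\psi_1(\eta+1).
\]
The order-$s^2$ coefficient of $\tfrac12\log Z_d$ is
\[
 \tfrac14\psi_1(1)-\tfrac14\psi_1(\eta+1)+\tfrac1{16\eta^2}.
\]
Subtracting, and then using $\psi_1(\eta)-\psi_1(\eta+1)=\eta^{-2}$ and $\psi_1(1)=\pi^2/6$, yields $-1/(48\eta^2)-\pi^2/36$.

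The cubic step runs the same way. The $\Fm$ contribution is
\[
 -\tfrac1{48}\bigl[\tfrac14\psi_2(\eta)+\tfrac12\psi_2(1)-\tfrac{15}4\psi_2(\eta+1)\bigr].
\]
The $\tfrac12\log Z_d$ contribution is $-\tfrac1{12}[\psi_2(1)-\psi_2(\eta+1)]+\tfrac1{48\eta^3}$. I then substitute $\psi_2(\eta+1)=\psi_2(\eta)+2\eta^{-3}$ and collect terms, which should produce $-(\psi_2(\eta)-7\psi_2(1)+3\eta^{-3})/96$.

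This cubic bookkeeping is the only place where an arithmetic slip is likely, so I expect it to be the main obstacle. As an independent check I will compare with Proposition~\ref{prop:cubic-rank}. There the leading coefficient of $C_{3,m,d}$ in $m$ is $m^3\cA_d$, and with $\alpha=s/m$ this gives exactly $\cA_d s^3$ as $m\to\infty$. This matches the claim, consistent with the heuristic that the high-rank limit of the fixed-order Taylor coefficients should be the Taylor coefficients of $\log\Xi_d$. The quadratic coefficient likewise matches $C_{2,m,d}/m^2$ from Proposition~\ref{prop:weighted-quadratic}. I will use this matching only as a check, not as the proof, since interchanging the limits $m\to\infty$ and $s\to0$ would need separate justification.

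Finally, for $d=0$ I set $\eta=1$ and use $\psi_2(1)=-2\zeta(3)$. Then $(\psi_2(1)-7\psi_2(1)+3)/96=(12\zeta(3)+3)/96=\zeta(3)/8+1/32$, and the quadratic coefficient reduces to $1/48+\pi^2/36$. This gives the displayed special case.
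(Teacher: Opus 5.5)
Your proposal is correct and follows essentially the same route as the paper: Taylor-expand the gamma integrand of $\mathcal F_d$ under the integral and the closed form of $\log Z_d$, subtract half of the latter, and simplify with $\psi_1(\eta)-\psi_1(\eta+1)=\eta^{-2}$, $\psi_2(\eta+1)=\psi_2(\eta)+2\eta^{-3}$, $\psi_1(1)=\pi^2/6$ and $\psi_2(1)=-2\zeta(3)$. Your order-two and order-three coefficients agree with the paper's general order-$k$ formulas, and the cubic collection does give $-(\psi_2(\eta)-7\psi_2(1)+3\eta^{-3})/96$; you are also right to use the rank-limit matching only as a consistency check.
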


\begin{proof}
All gamma arguments in \eqref{lr:eq:Fgamma} are positive in a
neighborhood of $s=0$, so Taylor expansion under the integral
is valid. Writing $\psi=(\log\Gamma)'$, the coefficient of
$s^k$, $k\ge1$, in $\Fm$ is
\[
 \frac{(-1/2)^k}{k!(k+1)}
 \left[\psi_{k-1}(\eta)+2\psi_{k-1}(1)
             -(2^{k+1}-1)\psi_{k-1}(\eta+1)\right],
\]
where $\psi_0=\psi$. The corresponding coefficient in $\log Z_d(s)$ is
\[
 \frac{(-1)^k}{k!}
       [\psi_{k-1}(1)-\psi_{k-1}(\eta+1)]
       +\frac1{k(2\eta)^k}.
\]
Subtract one half of the latter coefficient from the former.
The constant and linear terms vanish. At orders two and three,
use
\[
 \psi_1(\eta+1)=\psi_1(\eta)-\eta^{-2},\qquad
 \psi_2(\eta+1)=\psi_2(\eta)+2\eta^{-3},
\]
along with $\psi_1(1)=\pi^2/6$ and $\psi_2(1)=-2\zeta(3)$.
This gives \eqref{lr:eq:Taylor}.
\end{proof}

\begin{corollary}[Agreement with the fixed-rank coefficients]
\label{cor:coefficient-match}
Let $C_{2,m,d}$ and $C_{3,m,d}$ be the coefficients in
Section~\ref{sec:local}. The coefficients of $-s^2$ and $-s^3$
in $\log\Xi_d(s)$ are, respectively,
\[
 \lim_{m\to\infty}\frac{C_{2,m,d}}{m^2}
 =\frac1{48\eta^2}+\frac{\pi^2}{36},\qquad
 \lim_{m\to\infty}\frac{C_{3,m,d}}{m^3}=\cA_d.
\]
\end{corollary}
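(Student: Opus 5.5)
The argument is a direct comparison of the closed formulas already obtained. Corollary~\ref{lr:cor:Taylor} gives, for fixed $d<1/2$, the coefficients of $-s^2$ and $-s^3$ in $\log\Xi_d(s)$. They are $\frac1{48\eta^2}+\frac{\pi^2}{36}$ and $\frac{\psi_2(\eta)-7\psi_2(1)+3\eta^{-3}}{96}$. By definition, the second is exactly $\cA_d$ from Proposition~\ref{prop:cubic-rank}. So it remains to compute the two limits of the fixed-rank coefficients and check that they match.

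\emph{Quadratic coefficient.} Proposition~\ref{prop:weighted-quadratic} gives
\[
 C_{2,m,d}=m(m-1)\left[\frac1{48\eta^2}+\frac{\pi^2}{36}\right].
\]
Dividing by $m^2$ and using $m(m-1)/m^2\to1$ gives the first limit.

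\emph{Cubic coefficient.} Proposition~\ref{prop:cubic-rank} gives the affine form
\[
 C_{3,m,d}=m(m-1)(m\cA_d+\cB_d).
\]
Here $\cA_d$ and $\cB_d$ do not depend on $m$: they are built from the $\mu_d$-moments $K$, $X_\eta$, $B_\star$ and $M$. Each of these is finite because of the row integrability bound~\eqref{quad:eq:row}. Then
\[
 \frac{C_{3,m,d}}{m^3}=\frac{m-1}{m}\,\cA_d+\frac{m-1}{m^2}\,\cB_d\longrightarrow\cA_d .
\]

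\emph{Sign conventions.} The one point needing care is that both expansions describe the same quantity with the same signs. Theorem~\ref{quad:thm:expansion} and Lemma~\ref{cubic:lem:cubic} show that $\log(\sqrt{m!}\Lambda_m)$ has no linear term. Hence the coefficients of $-\alpha^2$ and $-\alpha^3$ are the same for $\log(\sqrt{m!}\Lambda_m)$ and for $\sqrt{m!}\Lambda_m$, with $C_{3,m,d}=-G_{3,m,d}$.

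Setting $\alpha=s/m$ turns these terms into $-(C_{2,m,d}/m^2)s^2$ and $-(C_{3,m,d}/m^3)s^3$. The statement is therefore a limit of Taylor coefficients in the variable $s$. It does not claim that the $O(\alpha^4)$ remainders are uniform in $m$, and none is needed, since only the explicit coefficients are compared.

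As a consistency check at $d=0$: $\psi_2(1)=-2\zeta(3)$ gives $\cA_0=(12\zeta(3)+3)/96=\frac1{32}+\frac{\zeta(3)}8$. This agrees with the leading $m^3$ term of~\eqref{main:eq:C3}, namely $\frac{9}{288}+\frac{3}{24}\zeta(3)$.
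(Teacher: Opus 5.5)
Your proposal is correct and follows the paper's proof: you take the explicit quadratic coefficient from Proposition~\ref{prop:weighted-quadratic} and the affine rank form $C_{3,m,d}=m(m-1)(m\cA_d+\cB_d)$ from Proposition~\ref{prop:cubic-rank}, divide by $m^2$ and $m^3$, and compare with \eqref{lr:eq:Taylor}. Your remarks that the linear term vanishes (so the coefficients agree before and after the logarithm), and your $d=0$ check against \eqref{main:eq:C3}, are accurate but not needed for the argument.
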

\begin{proof}
Use the explicit quadratic coefficient and
$C_{3,m,d}=m(m-1)(m\cA_d+\cB_d)$ from
Proposition~\ref{prop:cubic-rank}, and compare with
\eqref{lr:eq:Taylor}. The independently proved coefficient formulas
establish the matching directly.
\end{proof}

\section{Concentrating weights and an exponential-measure limit}
\label{sec:concentration}

Fix $m\ge2$ and $0<\alpha<1/m$, and put
\[
 s=m\alpha,\qquad \eta=1-d,\qquad
 \kappa=\eta-s/2>1/2,\qquad \nu(\dd u)=e^{-u}\dd u
 \quad (u\ge0).
\]
The probability measure $\mu_d$ concentrates at $x=1$ as
$d\to-\infty$. A change of coordinates puts all the corresponding
extremal problems on the fixed space $L^p(\nu)$.

\subsection{An exact change of measure and a quadratic convergence rate}
Define, off the diagonal,
\begin{align}
 k_{\alpha,\kappa}(u,v)
 &=\left[2\kappa\sinh\!\left(\frac{|u-v|}{2\kappa}\right)\right]^{-\alpha},
 &k_{\alpha,\infty}(u,v)&=|u-v|^{-\alpha},\label{eq:hyperbolic-kernel}\\
 \mathcal D_{m,\kappa}
 &=\max_{\|h\|_{L^p(\nu)}=1}
   \iint k_{\alpha,\kappa}(u,v)h(u)h(v)\nu(\dd u)\nu(\dd v),
 &p&=\frac m{m-1},\label{eq:exponential-D}\\
 \mathcal Z_{s,\kappa}
 &=\iint k_{\alpha,\kappa}(u,v)^m\nu(\dd u)\nu(\dd v)
 =\kappa^{1-s}B(\kappa+s/2,1-s).
 \label{eq:exponential-Z}
\end{align}
The definitions of $\mathcal D_{m,\infty}$ and
$\mathcal Z_{s,\infty}$ use the limiting kernel. Every maximum exists
and has a nonnegative maximizer: the kernels belong to
$L^m(\nu\otimes\nu)$, and the compactness proof of
Proposition~\ref{quad:prop:dual} applies on this probability space.

\begin{theorem}[Limit under concentration of the weight]
\label{thm:concentration}
For every admissible $(m,\alpha,d)$,
\begin{equation}\label{eq:concentration-exact}
 \Lambda_m(\alpha,d)
 =\frac{\mathcal D_{m,\kappa}^{m/2}}
        {\sqrt{m!\mathcal Z_{s,\kappa}}}.
\end{equation}
For fixed $m,\alpha$, as $d\to-\infty$,
\begin{equation}\label{eq:concentration-limit}
 \boxed{\displaystyle
 \Lambda_m(\alpha,d)=\Lambda_m^{\exp}(\alpha)
       [1+O((1-d)^{-2})],\qquad
 \Lambda_m^{\exp}(\alpha)
 :=\frac{\mathcal D_{m,\infty}^{m/2}}
         {\sqrt{m!\Gamma(1-m\alpha)}}>0.}
\end{equation}
The linear term in $(1-d)^{-1}$ vanishes.
\end{theorem}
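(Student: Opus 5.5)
The plan is to prove \eqref{eq:concentration-exact} by an exact change of variables that carries $\mu_d$ to the exponential law $\nu$. The limit \eqref{eq:concentration-limit} should then follow from a perturbation bound for the kernel $k_{\alpha,\kappa}$ as $\kappa\to\infty$, of the same Young/H\"older type as in Theorem~\ref{thm:critical}.

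\emph{Step 1: the exact identity.} Substitute $x=e^{-u/\kappa}$, $y=e^{-v/\kappa}$ with $u,v\ge0$. Then
\[
|x-y|=e^{-(u+v)/(2\kappa)}\,2\sinh\!\left(\frac{|u-v|}{2\kappa}\right),
\qquad\text{so}\qquad
|x-y|^{-\alpha}=\kappa^{\alpha}e^{\alpha(u+v)/(2\kappa)}k_{\alpha,\kappa}(u,v).
\]
Under this substitution $\mu_d$ becomes $(\eta/\kappa)e^{-\eta u/\kappa}\dd u$.

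\begin{itemize}
\item For $f\in L^p(\mu_d)$ I will set $f(x)=c\,e^{\gamma u}h(u)$. The constants $c$ and $\gamma$ are fixed by requiring $\|f\|_{L^p(\mu_d)}=\|h\|_{L^p(\nu)}$.
\item With this choice, the factor $e^{\alpha(u+v)/(2\kappa)}$ together with the two weights $e^{-\eta u/\kappa}$ should reduce exactly to $e^{-u-v}$. The conditions are $\gamma p=\eta/\kappa-1$ and $\gamma-(\eta-\alpha/2)/\kappa=-1$.
\item Both conditions hold precisely because $\kappa=\eta-s/2$: each is equivalent to $\gamma=-s(m-1)/(2m\kappa)$. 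I expect this consistency to be the real content of the choice of $\kappa$.
\item Hence $D_m(\alpha,d)=\mathrm{const}\cdot\mathcal D_{m,\kappa}$. The same substitution applied to $Z_d(s)$ gives $\mathrm{const}'\cdot\mathcal Z_{s,\kappa}$, and the constants match the homogeneity of $D^{m/2}/\sqrt Z$.
\item Proposition~\ref{quad:prop:dual} then gives \eqref{eq:concentration-exact}.
\item Evaluating $\mathcal Z_{s,\kappa}$ by the same substitution applied to \eqref{quad:eq:I} gives the beta form in \eqref{eq:exponential-Z}. Alternatively, one can integrate directly in $w=u-v$.
\end{itemize}

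\emph{Step 2: the denominator.} I will write
\[
\mathcal Z_{s,\kappa}=\Gamma(1-s)\,\kappa^{1-s}\,\frac{\Gamma(\kappa+s/2)}{\Gamma(\kappa+1-s/2)}.
\]
The two shifts $s/2$ and $1-s/2$ are symmetric about $1/2$. The standard gamma-ratio expansion therefore gives $\kappa^{1-s}\Gamma(\kappa+s/2)/\Gamma(\kappa+1-s/2)=1+O(\kappa^{-2})$ with no $\kappa^{-1}$ term. So $\mathcal Z_{s,\kappa}=\Gamma(1-s)[1+O(\kappa^{-2})]$. Since $\kappa=(1-d)-s/2$, we have $O(\kappa^{-2})=O((1-d)^{-2})$.

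\emph{Step 3: the numerator, which I expect to be the main obstacle.} Since $\sinh t\ge t$, we get $0\le k_{\alpha,\infty}-k_{\alpha,\kappa}$. A Taylor expansion of $\sinh$ gives the pointwise bound
\[
k_{\alpha,\infty}-k_{\alpha,\kappa}\le C\alpha\,\kappa^{-2}|u-v|^{2-\alpha}
\qquad\text{on }|u-v|\le\kappa.
\]
For $|u-v|>\kappa$ the difference is at most $|u-v|^{-\alpha}$. The exponential tail of $\nu\otimes\nu$ makes that region's $L^m$ contribution $O(e^{-c\kappa})$.

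Hence $\|k_{\alpha,\kappa}-k_{\alpha,\infty}\|_{L^m(\nu\otimes\nu)}=O(\kappa^{-2})$, using that all polynomial moments of $\nu$ are finite. The H\"older bound from the proof of Proposition~\ref{quad:prop:dual} then gives
\[
\bigl|B_\kappa(h,h)-B_\infty(h,h)\bigr|\le\|k_{\alpha,\kappa}-k_{\alpha,\infty}\|_{L^m(\nu\otimes\nu)}\,\|h\|_p^2 .
\]
Taking suprema on the common unit sphere yields $\mathcal D_{m,\kappa}=\mathcal D_{m,\infty}+O(\kappa^{-2})$. Positivity of $\mathcal D_{m,\infty}$ follows from the trial $h=1$.

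\emph{Conclusion.} Combining Steps 2 and 3 in \eqref{eq:concentration-exact} gives \eqref{eq:concentration-limit}, with vanishing linear term. The delicate points are the uniform treatment of the kernel difference near the diagonal, where both kernels are singular, and the exact bookkeeping of exponents in Step 1. If the pointwise Taylor bound proves awkward near $u=v$, I would instead write $k_{\alpha,\kappa}/k_{\alpha,\infty}=\bigl(\sinh t/t\bigr)^{-\alpha}$ with $t=|u-v|/(2\kappa)$, and bound $1-(\sinh t/t)^{-\alpha}\le\alpha t^2/6$ directly.
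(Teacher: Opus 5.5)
Your proposal is correct and follows the paper's proof: the same substitution $x=e^{-u/\kappa}$ with weight $e^{su/(2p\kappa)}$, and the same H\"older bound on $\|k_{\alpha,\infty}-k_{\alpha,\kappa}\|_{L^m(\nu\otimes\nu)}$ through $1\le\sinh z/z\le e^{z^2/6}$. (Both of your conditions actually give $\gamma=+s(m-1)/(2m\kappa)$, not a negative value.) The $\sinh$ bound is your fallback; it holds globally, so the split at $|u-v|=\kappa$ is unnecessary. The only real difference is that you handle $\mathcal Z_{s,\kappa}$ by the gamma-ratio expansion with shifts symmetric about $1/2$, while the paper integrates the same pointwise kernel bound to get the explicit one-sided estimate $0\le\Gamma(1-s)-\mathcal Z_{s,\kappa}\le s\Gamma(3-s)/(24\kappa^2)$, which it reuses in Corollary~\ref{cor:concentration-error}.
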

\begin{proof}
For $h\in L^p(\nu)$ define $f$ on $(0,1)$ by
\[
 f(e^{-u/\kappa})
 =\left(\frac\kappa\eta\right)^{1/p}
     e^{su/(2p\kappa)}h(u).
\]
Since $\mu_d(\dd x)=(\eta/\kappa)e^{-\eta u/\kappa}\dd u$,
this is a bijective isometry onto $L^p(\mu_d)$. Moreover
\[
 |e^{-u/\kappa}-e^{-v/\kappa}|^{-\alpha}
 =\kappa^\alpha e^{\alpha(u+v)/(2\kappa)}
        k_{\alpha,\kappa}(u,v).
\]
In the quadratic form, the exponential factors cancel because
$s(1-1/p)=\alpha$. Consequently
\[
 D_m(\alpha,d)=\eta^{2/m}\kappa^{\alpha-2/m}
                     \mathcal D_{m,\kappa},\qquad
 Z_m(\alpha,d)=\eta^2\kappa^{s-2}\mathcal Z_{s,\kappa}.
\]
The second equality follows either directly or from
$Z_m=\eta^2B(\eta,1-s)/\kappa$ and \eqref{eq:exponential-Z}.
Their factors cancel in \eqref{quad:eq:exact}, proving
\eqref{eq:concentration-exact}.

For independent $U,V\sim\nu$, the difference $|U-V|$ has density
$e^{-t}$ on $(0,\infty)$. Hence
\[
 \mathcal Z_{s,\infty}=\int_0^\infty e^{-t}t^{-s}\dd t
 =\Gamma(1-s).
\]
For finite $\kappa$, put $z=e^{-t/\kappa}$ in the same integral
with $t^{-s}$ replaced by $[2\kappa\sinh(t/(2\kappa))]^{-s}$.
This gives the beta formula \eqref{eq:exponential-Z}.

The elementary power-series comparison
\[
 1\le\frac{\sinh z}{z}\le e^{z^2/6}\qquad(z\ge0)
\]
follows from $(2n+1)!\ge6^n n!$. Using
$1-e^{-x}\le x$, for $t>0$ we obtain
\begin{equation}\label{eq:hyperbolic-difference}
 0\le t^{-\alpha}-[2\kappa\sinh(t/(2\kappa))]^{-\alpha}
 \le\frac{\alpha}{24\kappa^2}t^{2-\alpha}.
\end{equation}
Therefore
\begin{align}
 \|k_{\alpha,\infty}-k_{\alpha,\kappa}\|_{L^m(\nu\otimes\nu)}
 &\le\frac{\alpha}{24\kappa^2}\Gamma(2m-s+1)^{1/m},
 \label{eq:kernel-rate}\\
 0\le\mathcal D_{m,\infty}-\mathcal D_{m,\kappa}
 &\le\frac{\alpha}{24\kappa^2}\Gamma(2m-s+1)^{1/m},
 \label{eq:maximum-rate}\\
 0\le\Gamma(1-s)-\mathcal Z_{s,\kappa}
 &\le\frac{s}{24\kappa^2}\Gamma(3-s).
 \label{eq:normalization-rate}
\end{align}
For the middle bound, apply H\"older on the product space and
take suprema over the common unit sphere. Nonnegative maximizers
give its lower inequality. For the last bound use
\eqref{eq:hyperbolic-difference} with exponent $s$ and integrate.
The constant trial gives $\mathcal D_{m,\infty}\ge\Gamma(1-\alpha)>0$.
Substitution in \eqref{eq:concentration-exact}, and
$\kappa\sim1-d$, prove \eqref{eq:concentration-limit}.
\end{proof}

The exact change of measure and the even expansion of $\sinh z/z$
yield quadratic convergence of the kernels in operator norm,
which controls the variational maxima.

\begin{corollary}[An explicit finite-weight error bound]
\label{cor:concentration-error}
With $\tau_*$ from Section~\ref{sec:global}, set
\[
 G_{m,\alpha}:=\Gamma(1-\alpha)
       \frac{(1+p\tau_*)^{2/p}}{(1+\tau_*)^{2-\alpha}},\qquad
 a_\kappa:=\frac{\alpha\Gamma(2m-s+1)^{1/m}}
                    {24\kappa^2G_{m,\alpha}},\qquad
 b_\kappa:=\frac{s(1-s)(2-s)}{24\kappa^2}.
\]
Then $0<b_\kappa<1$ and, writing $x_+=\max\{x,0\}$,
\begin{equation}\label{eq:concentration-relative}
 (1-a_\kappa)_+^{m/2}
 \le\frac{\Lambda_m(\alpha,d)}{\Lambda_m^{\exp}(\alpha)}
 \le(1-b_\kappa)^{-1/2}.
\end{equation}
\end{corollary}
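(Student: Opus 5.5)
The plan is to compare the exact representation \eqref{eq:concentration-exact} with the limiting expression factor by factor. This gives
\[
 \frac{\Lambda_m(\alpha,d)}{\Lambda_m^{\exp}(\alpha)}
 =\left(\frac{\mathcal D_{m,\kappa}}{\mathcal D_{m,\infty}}\right)^{m/2}
  \left(\frac{\Gamma(1-s)}{\mathcal Z_{s,\kappa}}\right)^{1/2}.
\]
The proof of Theorem~\ref{thm:concentration} already provides both one-sided inequalities $\mathcal D_{m,\kappa}\le\mathcal D_{m,\infty}$ and $\mathcal Z_{s,\kappa}\le\Gamma(1-s)$, together with the quantitative rates \eqref{eq:maximum-rate} and \eqref{eq:normalization-rate}. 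Each side of \eqref{eq:concentration-relative} should then come from bounding one factor trivially by $1$ and the other by its rate.

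\emph{Upper bound.} First, $\mathcal D_{m,\kappa}/\mathcal D_{m,\infty}\le1$. For the normalization I use \eqref{eq:normalization-rate} and the identity $\Gamma(3-s)=(2-s)(1-s)\Gamma(1-s)$. Together they give
\[
 \mathcal Z_{s,\kappa}\ge\Gamma(1-s)\bigl(1-b_\kappa\bigr),
\]
which yields the factor $(1-b_\kappa)^{-1/2}$. Positivity of $b_\kappa$ is clear since $0<s<1$. For $b_\kappa<1$, I use $\kappa>1/2$, so $24\kappa^2>6$, while $s(1-s)(2-s)\le2/(3\sqrt3)<1$ on $(0,1)$.

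\emph{Lower bound.} The normalization factor is at least $1$. From \eqref{eq:maximum-rate},
\[
 \frac{\mathcal D_{m,\kappa}}{\mathcal D_{m,\infty}}\ge1-\frac{\alpha\Gamma(2m-s+1)^{1/m}}{24\kappa^2\mathcal D_{m,\infty}}.
\]
It therefore suffices to prove $\mathcal D_{m,\infty}\ge G_{m,\alpha}$; the positive part then handles the case $a_\kappa\ge1$. This lower bound is the only genuinely new step.

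I would try to transport the power trial $f_{\tau_*}$ of Section~\ref{sec:global}. By the isometry in the proof of Theorem~\ref{thm:concentration}, $x^{\eta\tau}$ becomes an exponential $e^{\lambda u}$ on $L^p(\nu)$. Its energy against $|u-v|^{-\alpha}$ is explicit, because for i.i.d.\ exponentials of rate $\beta$ the difference $|U-V|$ is exponential of rate $\beta$, so $\E|U-V|^{-\alpha}=\beta^\alpha\Gamma(1-\alpha)$. For $h=c\,e^{\lambda u}$ normalized in $L^p(\nu)$ this gives the energy
\[
 (1-p\lambda)^{2/p}(1-2\lambda)^{\alpha-2}\Gamma(1-\alpha).
\]
The main obstacle is matching the parametrization so that the trial value is exactly $G_{m,\alpha}$ at $\tau=\tau_*$. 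If a direct choice of $\lambda$ does not reproduce the stated form, the fallback is the scaling identity from the theorem's proof, $\mathcal D_{m,\kappa}=\eta^{-2/m}\kappa^{2/m-\alpha}D_m(\alpha,d)$. I would combine it with $D_m(\alpha,d)\ge H_{\tau_*}$ and use the beta-function form of $H_\tau$, whose arguments involve $\eta(1+\tau)$. Letting $d\to-\infty$ (so $\kappa\to\infty$) and applying Stirling to the beta factor should give $\mathcal D_{m,\infty}\ge\lim\eta^{-2/m}\kappa^{2/m-\alpha}H_{\tau_*}$. I expect that limit to be $\Gamma(1-\alpha)(1+p\tau_*)^{2/p}(1+\tau_*)^{\alpha-2}=G_{m,\alpha}$, because $\kappa/\eta\to1$ and the beta factor behaves like $\Gamma(1-\alpha)(\eta(1+\tau))^{\alpha-1}$. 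With this inequality in place, raising to the power $m/2$ completes \eqref{eq:concentration-relative}.
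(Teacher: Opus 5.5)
Your proposal is correct and is essentially the paper's argument: factor the ratio through \eqref{eq:concentration-exact}, bound one factor by $1$ and the other by \eqref{eq:maximum-rate} or \eqref{eq:normalization-rate}, and obtain $\mathcal D_{m,\infty}\ge G_{m,\alpha}$ from the normalized exponential trial $h(u)=(1+p\tau_*)^{1/p}e^{-\tau_*u}$. The ``obstacle'' you anticipate comes from an algebra slip: for $h=c\,e^{\lambda u}$ each variable carries rate $1-\lambda$ under $h\,\dd\nu$, not $1-2\lambda$, so the energy is $(1-p\lambda)^{2/p}(1-\lambda)^{\alpha-2}\Gamma(1-\alpha)$ and $\lambda=-\tau_*$ gives exactly $G_{m,\alpha}$; your scaling-limit fallback via $H_{\tau_*}$ and $d\to-\infty$ is also valid but unnecessary.
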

\begin{proof}
The normalized exponential trial
$h(u)=(1+p\tau_*)^{1/p}e^{-\tau_*u}$ has quadratic value
$G_{m,\alpha}$, so $\mathcal D_{m,\infty}\ge G_{m,\alpha}$.
Equations \eqref{eq:maximum-rate}--\eqref{eq:normalization-rate}
bound the numerator ratio between $(1-a_\kappa)_+$ and one,
and the normalization ratio between $(1-b_\kappa)$ and one.
Use \eqref{eq:concentration-exact}. Finally $\kappa>1/2$ and
$0<s<1$ give $b_\kappa<2/6<1$.
\end{proof}
The error constants depend on the fixed pair $(m,\alpha)$.

\subsection{Concentration after the high-rank limit}

As $d\to-\infty$, the probability measure $\mu_d$ concentrates near
$x=1$. The following limit is a consequence of the explicit formula
after the high-rank limit has already been taken.

\begin{corollary}\label{lr:cor:concentration}
For every fixed $0<s<1$, as $\eta=1-d\to\infty$,
\begin{equation}\label{lr:eq:concentration}
 \Xi_d(s)=\Xi_{\exp}(s)[1+O_s(\eta^{-2})],
\end{equation}
where
\begin{equation}\label{lr:eq:Xiexp}
 \Xi_{\exp}(s)=
 \frac{2\pi e^{-(1-s/2)}\Gamma(1-s/2)^2G(1-s/2)^{4/s}}
 {\sqrt{\Gamma(1-s)}}.
\end{equation}
\end{corollary}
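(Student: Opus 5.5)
The plan is to work directly from the explicit formula $\Xi_d(s)=e^{\Fm(s)}/\sqrt{Z_d(s)}$, using the gamma integral \eqref{lr:eq:Fgamma} rather than the Barnes form. The idea is to show that every $\eta$-dependent factor cancels up to $O(\eta^{-2})$. It is convenient to expand in $\kappa=\eta-s/2$, exactly as in Theorem~\ref{thm:concentration}, because the first-order corrections then vanish by symmetry. Since $\kappa/\eta=1+O(\eta^{-1})$, an $O(\kappa^{-2})$ remainder is $O_s(\eta^{-2})$.

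\emph{Step 1: the normalization.} Write
\[
 Z_d(s)=\frac{2\eta^2}{2\eta-s}\,\frac{\Gamma(1-s)\Gamma(\eta)}{\Gamma(\eta+1-s)}
 =\frac{\eta^2}{\kappa}\,\Gamma(1-s)\,\frac{\Gamma(\kappa+s/2)}{\Gamma(\kappa+1-s/2)}.
\]
Next I would use the standard expansion
\[
 \log\Gamma(y+a)-\log\Gamma(y+b)=(a-b)\log y+\frac{(a-b)(a+b-1)}{2y}+O(y^{-2}),
\]
uniform for $a,b$ in compacts. Here $a=s/2$ and $b=1-s/2$, so $a+b=1$ and the $1/\kappa$ term vanishes. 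This gives
\[
 Z_d(s)=\Gamma(1-s)\,\eta^2\kappa^{s-2}\,[1+O(\kappa^{-2})],
\]
which is consistent with the relation $Z_m=\eta^2\kappa^{s-2}\mathcal Z_{s,\kappa}$ and \eqref{eq:normalization-rate} already proved.

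\emph{Step 2: the free energy.} In \eqref{lr:eq:Fgamma}, group the first and third gamma terms as $\log\Gamma(\kappa+a_t)-\log\Gamma(\kappa+b_t)$, with
\[
 a_t=\tfrac{s(1-t)}2,\qquad b_t=1-\tfrac{st}2 .
\]
Then $a_t-b_t=s/2-1$ is independent of $t$, and $a_t+b_t-1=s/2-st$ has zero integral over $t\in[0,1]$. Integrating the same Stirling expansion in $t$ therefore gives $(s/2-1)\log\kappa+O(\kappa^{-2})$, with no $1/\kappa$ term. Hence
\[
 e^{\Fm(s)}=\eta\,\kappa^{s/2-1}\exp\Bigl\{2\int_0^1\log\Gamma(1-st/2)\dd t\Bigr\}[1+O(\kappa^{-2})].
\]
Dividing by $\sqrt{Z_d(s)}$ from Step~1 cancels the factor $\eta\kappa^{s/2-1}$ exactly. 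This leaves
\[
 \Xi_d(s)=\frac{\exp\{2\int_0^1\log\Gamma(1-st/2)\dd t\}}{\sqrt{\Gamma(1-s)}}\,[1+O_s(\eta^{-2})].
\]

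\emph{Step 3: closed form.} With $\omega=s/2$, write $2\int_0^1\log\Gamma(1-\omega t)\dd t=\frac{2}{\omega}\bigl(P(1)-P(1-\omega)\bigr)$, using the primitive $P$ from the proof of Proposition~\ref{lr:prop:Barnes}. Then insert $\mathfrak g(1)=0$ and $\mathfrak g(1-\omega)=-\omega\log\Gamma(1-\omega)-\log G(1-\omega)$. The polynomial terms should collapse to
\[
 \log(2\pi)-(1-\omega)+2\log\Gamma(1-\omega)+\tfrac{2}{\omega}\log G(1-\omega).
\]
Since $2/\omega=4/s$, this is the logarithm of the numerator of \eqref{lr:eq:Xiexp}.

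The main obstacle is bookkeeping rather than analysis. One must check that the $O(y^{-2})$ Stirling remainder is uniform in $t\in[0,1]$; this holds because the shifts $a_t,b_t$ lie in a fixed compact set. One must also verify the algebra in Step~3, in particular the elementary terms $\tfrac{2}{\omega}\bigl[-\tfrac{1}{2}+\tfrac{(1-\omega)^2}{2}+\dots\bigr]$, so that the factor $e^{-(1-s/2)}$ and the $2\pi$ come out exactly.
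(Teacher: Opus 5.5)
Your proposal is correct and follows essentially the paper's proof: the same bounded-shift gamma-ratio expansion applied to $\mathcal F_d(s)$ and $\tfrac12\log Z_d(s)$, then evaluation of $2\int_0^1\log\Gamma(1-\omega t)\dd t$ through the primitive $P$. The only difference is cosmetic. Expanding in $\kappa=\eta-s/2$ makes the first-order terms vanish separately in each factor. The paper instead expands in $\eta$ and notes that the terms $\omega(1-\omega)/\eta$ and $2\omega(1-\omega)/\eta$ cancel in $\mathcal F_d-\tfrac12\log Z_d$.
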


\begin{proof}
Put $\omega=s/2$. Uniformly for $t\in[0,1]$, the gamma-ratio expansion
\[
 \log\Gamma(\eta+u)-\log\Gamma(\eta+v)
 =(u-v)\log \eta+\frac{(u-v)(u+v-1)}{2\eta}+O_s(\eta^{-2})
\]
applies to the bounded shifts in \eqref{lr:eq:Fgamma}. It gives
\[
 \Fm(s)=\omega\log \eta+2\int_0^1\log\Gamma(1-\omega t)\dd t
                         +\frac{\omega(1-\omega)}\eta+O_s(\eta^{-2}).
\]
Similarly, \eqref{lr:eq:Z} gives
\[
 \log Z_d(s)=2\omega\log \eta+\log\Gamma(1-2\omega)
                         +\frac{2\omega(1-\omega)}\eta+O_s(\eta^{-2}).
\]
Both the logarithmic term and the $\eta^{-1}$ term cancel in
$\log\Xi_d=\Fm-\tfrac12\log Z_d$. Finally, the primitive $P$
used in Proposition~\ref{lr:prop:Barnes} yields
\[
 2\int_0^1\log\Gamma(1-\omega t)\dd t
 =\log(2\pi)-1+\omega+2\log\Gamma(1-\omega)+\frac2\omega\log G(1-\omega).
\]
Exponentiation proves \eqref{lr:eq:Xiexp} and the stated rate.
\end{proof}

The subscript $\exp$ reflects the elementary fact that
$-\eta\log X$ has the standard exponential distribution when
$X$ has law $\mu_d$.

\begin{corollary}[An explicit bound for the exponential-measure constants]
\label{cor:exp-lower}
For every $m\ge2$ and $0<\alpha<1/m$,
\[
 \Lambda_m^{\exp}(\alpha)\ge
              \frac{\Xi_{\exp}(m\alpha)}{\sqrt{m!}}.
\]
\end{corollary}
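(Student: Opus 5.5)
The bound will follow by passing to the limit $d\to-\infty$ in the finite-rank lower bound of Corollary~\ref{lr:cor:finite}, with $m$ and $\alpha$ held fixed. Put $s=m\alpha\in(0,1)$. For every $d\le0$ the triple $(m,\alpha,d)$ is admissible, since $2d+s<1$. Corollary~\ref{lr:cor:finite} then gives
\[
 \Lambda_m(\alpha,d)\ge\frac{\Xi_d(s)}{\sqrt{m!}}\qquad\text{for all } d\le0.
\]

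The next step is to let $d\to-\infty$ on both sides of this inequality. On the left, Theorem~\ref{thm:concentration} (equation \eqref{eq:concentration-limit}) gives $\Lambda_m(\alpha,d)\to\Lambda_m^{\exp}(\alpha)$ as $d\to-\infty$, for fixed $m,\alpha$. On the right, Corollary~\ref{lr:cor:concentration} gives $\Xi_d(s)\to\Xi_{\exp}(s)$ as $\eta=1-d\to\infty$, for fixed $s\in(0,1)$. Non-strict inequalities are preserved under limits, so
\[
 \Lambda_m^{\exp}(\alpha)\ge\frac{\Xi_{\exp}(m\alpha)}{\sqrt{m!}}.
\]

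I expect no real obstacle. The one point to check is that both concentration limits hold under exactly the present hypotheses. Theorem~\ref{thm:concentration} requires only admissibility and fixed $(m,\alpha)$, and this holds for all sufficiently negative $d$. Corollary~\ref{lr:cor:concentration} requires $0<s<1$, and this follows from $\alpha<1/m$. The rates $O((1-d)^{-2})$ and $O_s(\eta^{-2})$ in those results are not needed; only convergence is used.

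A direct proof is also available. One could repeat the Rényi/entropy argument of Section~\ref{sec:large-rank} on $L^p(\nu)$ with the kernel $|u-v|^{-\alpha}$ and evaluate the resulting free energy by a Laguerre-type Selberg integral. The limiting argument above avoids all of this and is the route I would take.
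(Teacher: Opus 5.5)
Your proposal is correct and is exactly the paper's argument. The paper fixes $(m,\alpha)$ in Corollary~\ref{lr:cor:finite}, lets $d\to-\infty$, and applies Theorem~\ref{thm:concentration} to the left-hand side and Corollary~\ref{lr:cor:concentration} to the right-hand side, as you do; your admissibility check ($2d+s<1$ for every $d\le0$, and $0<s<1$ from $\alpha<1/m$) is the only verification the passage to the limit requires.
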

\begin{proof}
Fix $m,\alpha$ in Corollary~\ref{lr:cor:finite} and let
$d\to-\infty$. Apply Theorem~\ref{thm:concentration} to its
left-hand side and Corollary~\ref{lr:cor:concentration} to its
right-hand side to obtain the stated finite-rank lower bound.
\end{proof}

\section{Scope and further questions}\label{sec:scope}
The explicit formula evaluates the normalized high-rank limit at
fixed effective memory. Theorem~\ref{thm:boundary-rank} establishes
commutation with the memory-boundary limit for each fixed $d\le0$,
with a joint remainder uniform in $m$ and $s$. For $d>0$, the
admissibility condition prevents approach to $s=1$.

At finite rank, the variational representation, global bounds and
local expansions describe $\Lambda_m(\alpha,d)$. A closed-form
evaluation of this full function remains open. The Taylor estimates
are for fixed rank and weight, whereas the weight-concentration
estimates are for fixed $(m,\alpha)$. The rate constant
$\mathcal K_d$ depends on $d$. The lower endpoint in
Corollary~\ref{cor:combined-lower} is asymptotically exact in rank;
the harmonic-mean approximation also depends on the accuracy of its
upper endpoint. The weight ordering in
Corollary~\ref{cor:local-weight-order} is local in $\alpha$;
global monotonicity in $d$ and the sign of the cubic coefficient
for general weights remain separate questions.

Further questions concern the limiting maximizing density and its
uniqueness, the first $m^{-1}$ correction to the optimized free energy,
and the high-rank limit on the exponential-measure space. Associated
Jacobi measures suggest a possible density formula whose validity
in the attractive parameter range requires a separate proof.
Commutation of the rank and weight-concentration limits is also open.

\end{document}


\maketitle
\noindent This supplement supplies the finite special-weight calculations
and detailed global comparison estimates used in the accompanying article.
References prefixed by M refer to numbered results and equations in the
main article; all other references are internal to this supplement.

\paragraph{Notation.}
Throughout,
\[
 \begin{gathered}
 m\ge2,\quad 0<\alpha<1/m,\quad s=m\alpha,\quad 2d+s<1,\\
 \eta=1-d,\quad p=m/(m-1),\quad\mu_d(\dd x)=\eta x^{\eta-1}\dd x.
 \end{gathered}
\]
The interval operators $K_\alpha$ and $R_\alpha$ have kernel
$|x-y|^{-\alpha}$ against Lebesgue measure and $\mu_d$, respectively.
The exact variational reduction in Proposition~M3.1 is
\[
 \sqrt{m!}\Lambda_m(\alpha,d)
 =D_m(\alpha,d)^{m/2}/\sqrt{Z_m(\alpha,d)},\qquad
 D_m=\max_{\|f\|_{L^p(\mu_d)}=1}\iint|x-y|^{-\alpha}f(x)f(y)\dd\mu_d^{\otimes2},
\]
\[
 Z_m(\alpha,d)=Z_d(s)=\frac{2\eta^2B(\eta,1-s)}{2\eta-s},
 \quad I_{m,\alpha,d}=Z_m/\eta^2,\quad w(x)=x^{-d}.
\]
For independent $X,Y\sim\mu_d$, set $\ell(x,y)=-\log|x-y|$,
\[
 a=\E \ell(X,Y),\quad b=\E \ell(X,Y)^2,\quad c=\E \ell(X,Y)^3,
 \quad s(x)=\E \ell(x,Y),\quad t(x)=s(x)-a,
\]
\[
 v=\E t(X)^2,\quad M=\E t(X)^3,\quad
 C=\E[t(X)\ell(X,Y)^2],\quad Q=\E[t(X)\ell(X,Y)t(Y)].
\]
For independent $X_i$ put $e_{ij}=\ell(X_i,X_j)-a$ and
\[
 K=\E e_{12}^3,\quad B_\star=\E(e_{12}^2e_{13})=C-2av,
 \quad P=\E(e_{12}e_{23}e_{34})=Q.
\]
The rank identity in Proposition~M7.7 is
\[
 C_{3,m,d}=m(m-1)(m\mathcal A_d+\mathcal B_d),\qquad
 \mathcal A_d=\frac{\psi_2(\eta)-7\psi_2(1)+3\eta^{-3}}{96},
\]
\[
 \begin{gathered}
 K=\psi_2(\eta)-\psi_2(1)+9/(4\eta^3),\qquad
 X_\eta=M+3P,\\
 X_\eta=[7\psi_2(\eta)-\psi_2(1)+15\eta^{-3}]/16,
 \quad\mathcal B_d=K/12+X_\eta/6-B_\star/2+M/6.
 \end{gathered}
\]
These formulas fix all moment conventions in the calculations below.

\section{Finite evaluations of the weighted cubic coefficient}\label{sec:evaluations}
\subsection{Integer weight parameters}
Write $H_j=\sum_{k=1}^j1/k$, $H_0=0$, and
$H_j^{(3)}=\sum_{k=1}^jk^{-3}$.

\begin{proposition}\label{prop:integer-cubic}
For each integer $N\ge1$, the coefficient at $d=1-N$ has a finite
evaluation
\[
 C_{3,m,1-N}=m(m-1)\left[
 \frac{m(2H_{N-1}^{(3)}+3N^{-3})}{96}
       +b_N+c_N\pi^2+\frac{3m+1}{24}\zeta(3)\right],
\]
where $b_N,c_N$ are rational and are computed by the beta-derivative
construction below. The first four pairs are
\[
\begin{array}{c|c|c|c}
 N&d&b_N&c_N\\ \hline
 1&0&-1/288&7/432\\
 2&-1&-17/2304&67/4320\\
 3&-2&-247/31104&2789/181440\\
 4&-3&-4043/497664&30581/1995840
\end{array}
\]
\end{proposition}
\begin{proof}
For density $Nx^{N-1}$ introduce the polynomials
\[
 P_N(x)=\sum_{j=0}^{N-1}\frac{x^j}{N-j},\qquad
 U_N(x)=2\sum_{j=0}^{N-1}\frac{H_N-H_{N-j-1}}{N-j}x^j.
\]
Splitting the row integrals at $y=x$ and expanding the integer power
of $y$ gives
\begin{align*}
 s_N(x)&=P_N(x)-x^N\log x-(1-x^N)\log(1-x),\\
 r_N(x)&:=\E \ell(x,X)^2\\
 &=U_N(x)+x^N\log^2x+(1-x^N)\log^2(1-x)\\
 &\quad-2H_Nx^N\log x+
             (2H_Nx^N-2P_N(x))\log(1-x).
\end{align*}
The mean is $a_N=H_N+1/(2N)$. Put $t_N=s_N-a_N$; then
\[
 v=N\int_0^1x^{N-1}t_N^2\dd x,\quad
 M=N\int_0^1x^{N-1}t_N^3\dd x,\quad
 B_\star=N\int_0^1x^{N-1}t_Nr_N\dd x-2a_Nv.
\]
Every term is a beta derivative:
\begin{equation}\label{eq:beta-derivatives}
 \int_0^1x^k\log^i x\log^j(1-x)\dd x
 =\left.\partial_u^i\partial_v^jB(u,v)\right|_{u=k+1,v=1},
 \qquad i+j\le3.
\end{equation}
At positive integers these derivatives reduce to rational numbers,
$\pi^2$ and $\zeta(3)$. Together with
(M7.18) and (M7.17), this is a
finite algorithm specifying $b_N,c_N$ for any prescribed $N$.

The displayed $\zeta(3)$ contribution can also be read off directly.
Only mixed logarithms of total degree three contribute it, with
coefficient twice the integral of their polynomial multiplier in
\eqref{eq:beta-derivatives}. In $M$ its coefficient is
$-6\int_0^1z(1-z)\dd z=-1$, under $z=x^N$; in $B_\star$ it is
$-4\int_0^1z(1-z)\dd z=-2/3$. Substitution into
(M7.17) gives $(3m+1)\zeta(3)/24$.
This specifies a representation, without assuming linear independence
of $1,\pi^2,\zeta(3)$ over the rationals.
\end{proof}

For example, at $d=-1$ the finite row calculation gives
\[
 v=\frac{31}{48}-\frac{\pi^2}{18},\quad
 M=\frac43-\frac{13\pi^2}{720}-\zeta(3),\quad
 B_\star=\frac{337}{288}-\frac{\pi^2}{27}-\frac23\zeta(3),
\]
and hence
\[
\begin{aligned}
 \sqrt{m!}\Lambda_m(\alpha,-1)=1
 &-m(m-1)\left(\frac1{192}+\frac{\pi^2}{36}\right)\alpha^2\\
 &-m(m-1)\left(\frac{57m-17}{2304}
 +\frac{67\pi^2}{4320}+\frac{3m+1}{24}\zeta(3)\right)\alpha^3
 +O(\alpha^4).
\end{aligned}
\]

\subsection{Rational parameters and logarithmic row identities}
For general $\eta>0$ define
\[
 s_\eta(x)=\eta\int_0^1y^{\eta-1}[-\log|x-y|]\dd y,\qquad
 r_\eta(x)=\eta\int_0^1y^{\eta-1}\log^2|x-y|\dd y.
\]
Here $r_\eta$ is a row second moment, distinct from the centered
two-variable remainder in (M7.5).

\begin{lemma}\label{lem:row-ode}
On $0<x<1$, with $b_0(x)=-\log(1-x)$,
\[
 xs_\eta'=\eta s_\eta-1-\eta b_0,\qquad
 xr_\eta'=\eta r_\eta-2s_\eta-\eta b_0^2.
\]
If $\eta=p_0/q_0$ in lowest terms, with positive integers $p_0,q_0$
and $q_0\ge2$, put $x=t^{q_0}$, $S(t)=s_\eta(t^{q_0})$ and
$N=\lfloor\eta\rfloor$. Then
\begin{equation}\label{eq:rational-row}
\begin{split}
 S(t)={}&\frac{q_0}{p_0}
 +q_0\sum_{j=1}^N\frac{t^{q_0j}}{p_0-q_0j}
 -\pi\cot(\pi\eta)t^{p_0}\\
 &-\sum_{\zeta^{q_0}=1}
       (1-\zeta^{p_0}t^{p_0})\log(1-\zeta t).
\end{split}
\end{equation}
All logarithms are the branches analytic at zero; conjugate terms pair
to a real value.
\end{lemma}
\begin{proof}
For $F_z(x)=\eta\int_0^1y^{\eta-1}|x-y|^z\dd y$,
homogeneity and integration by parts initially for $\Re z>0$ give
\[
 xF_z'(x)=(\eta+z)F_z(x)-\eta(1-x)^z.
\]
Its integrated form continues to a neighborhood of $z=0$.
Taking the first two $z$ derivatives there gives the two row
identities by differentiating the integrated analytic identity.

For noninteger $\eta$, solving the first identity by a series gives
\[
 s_\eta(x)=\eta^{-1}+A x^\eta
       +\eta\sum_{k=1}^{\infty}\frac{x^k}{k(\eta-k)}.
\]
The endpoint value is
$s_\eta(1)=\psi(\eta+1)-\psi(1)$. The digamma reflection formula
and the convergent endpoint sum give $A=-\pi\cot(\pi\eta)$.
Expanding the logarithms in \eqref{eq:rational-row} and using
$\sum_{\zeta^{q_0}=1}\zeta^j=q_0$ when $q_0$ divides $j$, and
zero otherwise, recovers precisely this series after $x=t^{q_0}$.
The finite polynomial supplies the first $N$ missing terms.
The series identity on $0<t<1$ proves the formula.
\end{proof}

The integers $p_0,q_0$ are unrelated to the Lebesgue exponent
$p=m/(m-1)$. Set
$R(t)=r_\eta(t^{q_0})$, $b_q(t)=-\log(1-t^{q_0})$, and let
$T$ have density $p_0t^{p_0-1}$ on $(0,1)$. Then
\[
 tS'=p_0S-q_0-p_0b_q,\qquad
 tR'=p_0R-2q_0S-p_0b_q^2.
\]
Writing $h=\psi(\eta+1)-\psi(1)$, we have
\[
 r_1:=R(1)=h^2+\psi_1(1)-\psi_1(\eta+1),\qquad
 a=\E S(T)=h+\frac1{2\eta}.
\]
The auxiliary primitive
\begin{equation}\label{eq:primitive-J}
 J(t):=\int_0^tu^{2p_0-1}S(u)\dd u
 =\frac{t^{2p_0}S(t)}{3p_0}
  +\frac{q_0t^{2p_0}}{6p_0^2}
  +\frac13\int_0^tu^{2p_0-1}b_q(u)\dd u
\end{equation}
follows by differentiating $t^{2p_0}S(t)$. Solving the second row
identity backwards from one and then using Tonelli gives
\begin{equation}\label{eq:eliminate-row-second}
 \E[S(T)R(T)]=p_0\left[r_1J(1)+
 \int_0^1t^{-p_0-1}J(t)\{2q_0S(t)+p_0b_q(t)^2\}\dd t\right].
\end{equation}
All integrands before expansion are nonnegative and the logarithmic
moments are finite. These identities avoid computing $R$ explicitly.
In terms of them,
\[
 M=\E(S-a)^3,\qquad
 B_\star=\E(SR)-ab-2av,
\]
where the already evaluated quantities are
\[
 b=a^2+\psi_1(1)-\psi_1(\eta)+\frac5{4\eta^2},\qquad
 v=\frac12\left[\psi_1(1)-\psi_1(\eta)+\frac7{6\eta^2}
                                -\frac{\pi^2}{9}\right].
\]

\begin{proposition}\label{prop:rational-cubic}
For every rational $\eta=1-d>1/2$, the cubic coefficient
$C_{3,m,d}$ admits a terminating reduction to a finite algebraic
combination of cyclotomic iterated-integral constants of total weight
at most three. The letters are zero and the roots of unity of order
equal to the denominator of $\eta$; $2\pi i$ is included with weight one.
\end{proposition}
\begin{proof}
The integer case is Proposition~\ref{prop:integer-cubic}. Otherwise
\eqref{eq:rational-row} makes $S$ a polynomial plus polynomial
multiples of $\log(1-\zeta t)$ and a polynomial multiple of $\pi$.
The last integral in \eqref{eq:primitive-J} is elementary:
integration by parts leaves the rational function
$t^{2p_0+q_0-1}/(1-t^{q_0})$, whose poles are roots of unity.
Thus $J$ has the same finite polynomial-logarithmic form.

Expand $\E(S-a)^3$ and \eqref{eq:eliminate-row-second} as Laurent
monomials times products of at most three logarithms. Terms are
integrated with a common endpoint cutoff. Precisely, in the integration
variable $t$, define $\operatorname{FP}\int_0^1 f(t)\dd t$ as the
coefficient of $\varepsilon^0(\log\varepsilon)^0$ in the asymptotic
expansion of $\int_\varepsilon^{1-\varepsilon}f(t)\dd t$ as
$\varepsilon\downarrow0$. The Laurent-logarithmic functions used here
admit such expansions. All terms of a sum use the same cutoff, so
finite-part extraction is linear. For an originally convergent sum,
the divergent terms cancel and the combined constant term equals its
ordinary integral. A substitution carries its cutoff endpoints with
it before the finite part is taken.

In $\E(S-a)^3$ the monomial powers are nonnegative. In
\eqref{eq:eliminate-row-second}, logarithmic terms in $J$ have powers
$0,2p_0,3p_0$; thus terms with three logarithms have monomial
exponents $-p_0-1,p_0-1,2p_0-1$, never $-1$.
A term of exponent $-1$ therefore has at most two logarithms.
For exponents other than $-1$, integration by parts reduces the
logarithmic degree by one. Partial fractions then leave simple poles
at zero and roots of unity; any higher pole is reduced by another
integration by parts, again reducing the logarithmic degree.
The procedure terminates.

Shuffle multiplication expresses the resulting primitives using
iterated integrals with those letters. The preceding degree count
bounds the total weight by three, including constants such as $\pi$
already multiplying a term. Rational-argument polygamma constants
at the endpoints belong to the same algebra. This proves the stated
finite reduction. The function class and its shuffle algebra are
standard; see \cite{RV00,ABS11}.
\end{proof}
The proposition specifies a finite evaluation class for the
rational-weight cubic Taylor coefficient.

\subsection{Two explicit noninteger examples}
At $\eta=3/2$, use $p_0=3,q_0=2$ to obtain
\[
 S(t)=\frac23+2t^2-(1-t^3)\log(1-t)-(1+t^3)\log(1+t),
 \qquad a=3-2\log2.
\]
The primitive required in \eqref{eq:primitive-J} is
\[
 \int_0^tu^5[-\log(1-u^2)]\dd u
 =\frac{(1-t^6)\log(1-t^2)+t^2+t^4/2+t^6/3}{6}.
\]
Finite logarithmic integration, detailed in
Section~\ref{app:half}, gives
\[
\begin{aligned}
 M&=\frac{761}{81}+\frac{19\pi^2}{945}-8\zeta(3),\\
 B_\star&=\frac{2053}{243}-\frac{2\pi^2}{81}-\frac{41}{6}\zeta(3),\\
 P&=-\frac{343}{486}-\frac{19\pi^2}{2835}+\frac23\zeta(3).
\end{aligned}
\]
Consequently, for every fixed $m\ge2$,
\begin{equation}\label{eq:half-expansion}
\begin{split}
 \sqrt{m!}\Lambda_m(\alpha,-\tfrac12)=1
 &-m(m-1)\left(\frac1{108}+\frac{\pi^2}{36}\right)\alpha^2\\
 &-m(m-1)\left(\frac{19m}{108}-\frac{55}{972}
 +\frac{89\pi^2}{5670}+\frac{\zeta(3)}{12}\right)\alpha^3
 +O(\alpha^4).
\end{split}
\end{equation}
All logarithms of two cancel in this coefficient. The same calculation
at $\eta=5/2$ gives
\[
 \cA_{-3/2}=\frac{7081}{40500},\qquad
 \cB_{-3/2}=-\frac{7027}{121500}
       +\frac{12503\pi^2}{810810}+\frac{\zeta(3)}{12}.
\]
These examples illustrate the finite reduction at two half-integer
weight parameters.

\section{Logarithmic integrations at \texorpdfstring{$d=0$}{d=0}}\label{app:uniform}
\subsection{Quadratic logarithmic moments}

Here $\mu_0$ is uniform on $[0,1]$. Direct integration gives
\begin{equation}\label{quad:eq:s0}
 s(x)=1-x\log x-(1-x)\log(1-x),\qquad a=\frac32,
\end{equation}
with $0\log0=0$. For example, split the defining integral for $s$ at $y=x$
and use $\int_0^t-\log u\dd u=t(1-\log t)$.
The density of $|X-Y|$ is $2(1-t)$ for $0<t<1$, so
\[
 b=2\int_0^1(1-t)\log^2t\dd t=\frac72,
 \qquad b-a^2=\frac54.
\]
To calculate $v$, write
$H(x)=-x\log x-(1-x)\log(1-x)$; then $s=1+H$ and
$\int H\dd x=1/2$. Squaring $H$ gives
\begin{equation}\label{quad:eq:H2}
 \int_0^1H(x)^2\dd x=
 2\int_0^1x^2\log^2x\dd x+
 2\int_0^1x(1-x)\log x\log(1-x)\dd x.
\end{equation}
The first integral is $2/27$. For the mixed integral differentiate
$B(a_0,b_0)=\int_0^1x^{a_0-1}(1-x)^{b_0-1}\dd x$:
\[
 \partial_{a_0}\partial_{b_0}B(a_0,b_0)
 =B(a_0,b_0)\left[
  (\psi(a_0)-\psi(a_0+b_0))(\psi(b_0)-\psi(a_0+b_0))
  -\psi_1(a_0+b_0)\right].
\]
At $a_0=b_0=2$, use $B(2,2)=1/6$,
$\psi(2)-\psi(4)=-5/6$ and
$\psi_1(4)=\pi^2/6-49/36$. Consequently
\[
 \int_0^1x(1-x)\log x\log(1-x)\dd x
   =\frac{37}{108}-\frac{\pi^2}{36}.
\]
Equation \eqref{quad:eq:H2} therefore yields
\[
 \int H^2\dd x=\frac56-\frac{\pi^2}{18},\qquad
 v=\frac7{12}-\frac{\pi^2}{18},\qquad
 V_0=\frac54-2v=\frac1{12}+\frac{\pi^2}{9}.
\]
Substitution into Theorem~M7.2 proves
\begin{equation}\label{quad:eq:unweighted}
 \boxed{\displaystyle
 \Lambda_m(\alpha,0)=\frac1{\sqrt{m!}}
 \left[1-m(m-1)\left(\frac1{48}+\frac{\pi^2}{36}\right)
          \alpha^2+O(\alpha^3)\right].}
\end{equation}
This formula holds for each fixed integer $m\geq2$; rank one instead has
$\Lambda_1=1$ exactly under the variance normalization.

\subsection{Cubic logarithmic moments}

Now $X,Y$ are uniform. Put
\[
 H(x)=-x\log x-(1-x)\log(1-x),\quad
 s(x)=1+H(x),\quad t(x)=H(x)-\tfrac12.
\]
The elementary moments and the quadratic calculation give
\begin{equation}\label{cubic:eq:basic}
 a=\frac32,\quad b=\frac72,\quad c=\frac{45}{4},\quad
 \kappa_3=\frac94,\quad v=\frac7{12}-\frac{\pi^2}{18}.
\end{equation}
For example $\E\ell^j=2\int_0^1(1-u)(-\log u)^j\dd u$.
The remaining moments have the values
\begin{align}
 C&=\frac{26}{9}-\frac{11\pi^2}{54}-\frac23\zeta(3),\label{cubic:eq:Cval}\\
 M&=\frac43-\frac{\pi^2}{72}-\zeta(3),\label{cubic:eq:Mval}\\
 Q&=-\frac{19}{144}+\frac{\pi^2}{216}+\frac1{12}\zeta(3).
     \label{cubic:eq:Qval}
\end{align}

To prove \eqref{cubic:eq:Cval}--\eqref{cubic:eq:Mval}, set
$J(x)=x\log^2x+(1-x)\log^2(1-x)$. Then
\[
 \int \ell(x,y)^2\dd y=2+2H(x)+J(x),\qquad
 C=2v+\int HJ\dd x-\frac14.
\]
Differentiation of the beta integral gives
\begin{align*}
 \int HJ\dd x
   &=\frac{71}{36}-\frac{5\pi^2}{54}-\frac23\zeta(3),\\
 \int H^3\dd x
   &=\frac73-\frac{7\pi^2}{72}-\zeta(3),\qquad
 \int H^2\dd x=\frac56-\frac{\pi^2}{18}.
\end{align*}
More explicitly, the first two expressions are
$4/27-2\partial_a\partial_b^2B(2,2)$ and
$3/64-6\partial_a^2\partial_bB(3,2)$, respectively.
Using $M=\int H^3-(3/2)\int H^2+1/4$ proves the claims.

\subsubsection{The double logarithmic moment}
Define
\begin{align*}
 I_A&=\iint xy\log x\log y\,[-\log|x-y|]\dd x\dd y,\\
 I_B&=\iint x(1-y)\log x\log(1-y)\,[-\log|x-y|]\dd x\dd y.
\end{align*}
Reflection and exchange of variables show that
\begin{equation}\label{cubic:eq:QAB}
 Q=2(I_A+I_B)-\frac{23}{24}+\frac{\pi^2}{18}.
\end{equation}
The first integral is obtained by differentiating the identity
\[
 \iint x^a y^b|x-y|^g\dd x\dd y
 =\frac{B(a+1,g+1)+B(b+1,g+1)}{a+b+g+2}
\]
once in each of $a,b,g$, with a minus sign, at $(1,1,0)$. Hence
\begin{equation}\label{cubic:eq:IA}
 I_A=\frac{27}{128}-\frac{\pi^2}{96}.
\end{equation}

For $I_B$, split the square at $y=x$ and reflect the upper triangle by
exchanging $x,y$. The two terms are
\begin{align*}
 A_\triangle&=-\int_0^1x\log x\int_0^x
        (1-y)\log(1-y)\log(x-y)\dd y\dd x,\\
 B_\triangle&=-\int_0^1(1-x)\log(1-x)\int_0^x
        y\log y\log(x-y)\dd y\dd x.
\end{align*}
For the second term, the inner integral equals
\[
 x^2\left[\frac12\log^2x-\log x+1-\frac{\pi^2}{12}\right].
\]
Beta derivatives therefore give
\begin{equation}\label{cubic:eq:Btri}
 B_\triangle=\frac{3125}{6912}-\frac{17\pi^2}{576}
                                      -\frac{\zeta(3)}{12}.
\end{equation}
For the first term, expand
\[
 (1-y)\log(1-y)=-y+\sum_{n=2}^\infty\frac{y^n}{n(n-1)}.
\]
All terms of the series are nonnegative, so Tonelli applies to the series
part after accounting for the signs of the two logarithms. Since
\[
 \int_0^x y^n\log(x-y)\dd y
 =\frac{x^{n+1}}{n+1}(\log x-H_{n+1}),
\]
where $H_k=\sum_{j=1}^k1/j$, we obtain
\begin{align}
 A_\triangle&=\frac1{16}-S_R-S_H,\notag\\
 S_R&=\sum_{k=3}^\infty
       \frac{2}{(k-2)(k-1)k(k+2)^3},\label{cubic:eq:series}\\
 S_H&=\sum_{k=3}^\infty
       \frac{H_k}{(k-2)(k-1)k(k+2)^2}.\notag
\end{align}
These sums evaluate to
\begin{align}
 S_R&=\frac{1741}{6912}-\frac{13\pi^2}{864}-\frac{\zeta(3)}{12},\notag\\
 S_H&=\frac{209}{3456}+\frac{\pi^2}{1728}-\frac{\zeta(3)}{24}.
      \label{cubic:eq:sumvals}
\end{align}
For a reproducible derivation, partial fractions give
\begin{align*}
 \frac{2}{(k-2)(k-1)k(k+2)^3}
 &=\frac1{64(k-2)}-\frac2{27(k-1)}+\frac1{8k}
    -\frac{115}{1728(k+2)}\\
 &\hspace{8mm}-\frac{13}{144(k+2)^2}-\frac1{12(k+2)^3},\\
 \frac1{(k-2)(k-1)k(k+2)^2}
 &=\frac1{32(k-2)}-\frac1{9(k-1)}+\frac1{8k}
    -\frac{13}{288(k+2)}-\frac1{24(k+2)^2}.
\end{align*}
The simple-pole coefficients in each line sum to zero, so divergent terms
are combined before limits are taken. Summation by parts gives
\begin{align*}
 \sum_{k=3}^\infty H_k\left(\frac1{k-1}-\frac1k\right)&=\frac54,\\
 \sum_{k=3}^\infty H_k\left(\frac1{k-2}-\frac1k\right)&=\frac72,\\
 \sum_{k=3}^\infty H_k\left(\frac1{k+2}-\frac1k\right)&=\frac1{24}-\zeta(2),\\
 \sum_{k=3}^\infty\frac{H_k}{(k+2)^2}
     &=\zeta(3)+\zeta(2)-\frac{635}{288}.
\end{align*}

For the last identity, shift the summation index and use the classical
Euler sum
\[
 \sum_{k\ge1}\frac{H_k}{k^2}=2\zeta(3).
\]
These identities prove \eqref{cubic:eq:sumvals} without fitting decimal values.
Combining \eqref{cubic:eq:Btri}--\eqref{cubic:eq:sumvals} yields
\begin{equation}\label{cubic:eq:IB}
 I_B=\frac{233}{1152}-\frac{13\pi^2}{864}+\frac{\zeta(3)}{24}.
\end{equation}
Now \eqref{cubic:eq:QAB} and \eqref{cubic:eq:IA} prove \eqref{cubic:eq:Qval}.

\subsection{Completion and an independent rank-two check}

Substituting \eqref{cubic:eq:basic}--\eqref{cubic:eq:Qval} into
(M7.16) simplifies to
\[
 G_{3,m,0}=-m(m-1)
 \left[\frac{9m-1}{288}+\frac{7\pi^2}{432}
                       +\frac{3m+1}{24}\zeta(3)\right].
\]
The quadratic coefficient is (M7.20). Because the logarithm has no
linear term, exponentiation changes neither coefficient through order
three, proving Theorem~M7.8.

For $m=2$, $D_\alpha$ is the largest eigenvalue of
$K_\alpha f(x)=\int_0^1|x-y|^{-\alpha}f(y)\dd y$.
At zero, $K_0$ is the projection onto the constant functions. The third
Rayleigh--Schr\"odinger coefficient of this isolated eigenvalue is
\[
 \frac c6+C+Q-av,
\]
because the reduced resolvent on the orthogonal complement of constants
is the identity. This is exactly (M7.14) at $r=1$. Explicitly,
\begin{align*}
 \lambda_{\max}(K_\alpha)
 &=1+\frac32\alpha+\left(\frac73-\frac{\pi^2}{18}\right)\alpha^2\\
 &\quad+\left(\frac{541}{144}-\frac{25\pi^2}{216}
                         -\frac7{12}\zeta(3)\right)\alpha^3
 +O(\alpha^4).
\end{align*}
Using $\Lambda_2=\sqrt{(1-2\alpha)(1-\alpha)/2}\,
\lambda_{\max}(K_\alpha)$ gives
\[
 \Lambda_2=\frac1{\sqrt2}\left[
 1-\left(\frac1{24}+\frac{\pi^2}{18}\right)\alpha^2
 -\left(\frac{17}{144}+\frac{7\pi^2}{216}
                   +\frac7{12}\zeta(3)\right)\alpha^3
 +O(\alpha^4)\right].
\]

\section{Finite logarithmic integration for half-integer weights}\label{app:half}
The reduction in Section~\ref{sec:evaluations} becomes particularly
explicit when $q_0=2$ and $p_0$ is odd. Write
$a(t)=\log(1-t)$, $b(t)=\log(1+t)$, $\ell=\log2$ and $z=\zeta(3)$.
After inserting \eqref{eq:rational-row} and
\eqref{eq:primitive-J} into the moments, the integrals to evaluate are
finite linear combinations of
\[
 I_{n,i,j}=\operatorname{FP}\int_0^1t^na(t)^ib(t)^j\dd t,
 \qquad n\in\mathbb Z,\quad i+j\le3.
\]
The finite part refers to a common endpoint cutoff in a sum whose
original integral is convergent. In particular it preserves the
cancellations at zero caused by $J(t)=O(t^{2p_0})$.

For $n\ne-1$ and $i+j>0$, integration by parts using
$(t^{n+1}-1)/(n+1)$ gives the recurrence
\[
\begin{split}
 I_{n,i,j}={}&-\frac{c_{n,i,j}}{n+1}
 +\frac{i}{n+1}\operatorname{FP}\int_0^1
       \frac{t^{n+1}-1}{1-t}a^{i-1}b^j\dd t\\
 &-\frac{j}{n+1}\operatorname{FP}\int_0^1
       \frac{t^{n+1}-1}{1+t}a^ib^{j-1}\dd t,
\end{split}
\]
where $c_{n,i,j}$ is the coefficient of $t^{-n-1}$ in the Taylor
series of $a(t)^ib(t)^j$ if $n<-1$, and is zero otherwise.
The boundary at one vanishes because of the subtraction in the
primitive. Polynomial division and partial fractions reduce the
logarithmic degree, so the recurrence terminates. For degree zero,
$I_{n,0,0}=1/(n+1)$ when $n\ne-1$, with finite-part interpretation
if necessary, and $I_{-1,0,0}=0$.

The two-logarithm base integrals are
\begin{align*}
 \int_0^1\frac{a^2}{t}\dd t&=2z,&
 \int_0^1\frac{ab}{t}\dd t&=-\frac58z,&
 \int_0^1\frac{b^2}{t}\dd t&=\frac14z,\\
 \int_0^1\frac{a^2}{1+t}\dd t
 &=\frac74z-\frac{\pi^2\ell}{6}+\frac{\ell^3}{3},&&&\\
 \int_0^1\frac{ab}{1+t}\dd t
 &=\frac18z-\frac{\pi^2\ell}{12}+\frac{\ell^3}{3},&
 \int_0^1\frac{b^2}{1+t}\dd t&=\frac{\ell^3}{3}.&&
\end{align*}
The one-logarithm values are
\[
 \int_0^1\frac a t\dd t=-\frac{\pi^2}{6},\quad
 \int_0^1\frac b t\dd t=\frac{\pi^2}{12},\quad
 \int_0^1\frac a{1+t}\dd t=-\frac{\pi^2}{12}+\frac{\ell^2}{2},
 \quad \int_0^1\frac b{1+t}\dd t=\frac{\ell^2}{2}.
\]
Together with $\int_0^1(1+t)^{-1}\dd t=\ell$, these are a complete
set of base values for the recurrence at total logarithmic degree
at most three occurring here. They follow by series integration,
integration by parts and the substitutions $u=(1-t)/2$ or $u=1/(1+t)$;
the needed endpoint values are
\[
 \operatorname{Li}_2(1/2)=\frac{\pi^2}{12}-\frac{\ell^2}{2},\qquad
 \operatorname{Li}_3(1/2)=\frac78z-\frac{\pi^2\ell}{12}
                         +\frac{\ell^3}{6}.
\]
Alternatively they can be evaluated in the harmonic-polylogarithm
algebra of \cite{RV00}. The recurrence uses exact rational arithmetic;
no integer-relation fitting is required.

For $p_0=3$ the input quantities are
\[
\begin{split}
 R(1)&=(8/3-2\ell)^2+40/9-\pi^2/3,\\
 J(t)&=\frac{t^6S(t)}9+\frac{t^6}{27}
       +\frac{(1-t^6)\log(1-t^2)+t^2+t^4/2+t^6/3}{18}.
\end{split}
\]
Applying the displayed recurrence to $3\int t^2(S-a)^3\dd t$ and
\eqref{eq:eliminate-row-second} yields the moments preceding
\eqref{eq:half-expansion}. The value of $P$ also follows from
$(X_{3/2}-M)/3$, providing an independent algebraic route.
For $p_0=5$ the same recurrence yields
\[
\begin{aligned}
 M&=\frac{91864}{10125}+\frac{6497\pi^2}{135135}-8\zeta(3),\\
 B_\star&=\frac{28246}{3375}-\frac{2\pi^2}{135}-\frac{41}{6}\zeta(3),\\
 P&=-\frac{35513}{60750}-\frac{6497\pi^2}{405405}+\frac23\zeta(3).
\end{aligned}
\]
Substitution into (M7.17) proves the stated
$\eta=5/2$ evaluation.

\section{Detailed global comparison estimates}\label{sec:global}
\begin{theorem}[Global bounds and relative approximation]\label{main:thm:global}
Let $m\ge2$, $0<\alpha<1/m$, and $s=m\alpha$. Put
\[
 U_m(\alpha)=\frac1{\sqrt{m!}}\min\left\{1,
 2^{s/2}\sqrt{\frac{1-s}{1-s/2}}\right\},\qquad
 \varrho_m=2\sqrt2\left[(1-m^{-1})(1-(2m)^{-1})\right]^{m/2}.
\]
Then
\[
 0<L_{m,\alpha,0}<\Lambda_m(\alpha,0)\le U_m(\alpha),
 \qquad \frac{U_m(\alpha)}{L_{m,\alpha,0}}
 \le\varrho_m<2\sqrt2e^{-3/4}.
\]
Writing $L=L_{m,\alpha,0}$ and $U=U_m(\alpha)$, the explicit approximation
$\widehat\Lambda_m=2LU/(L+U)$ satisfies
\[
 \left|\frac{\widehat\Lambda_m}{\Lambda_m(\alpha,0)}-1\right|
 \le\frac{U-L}{U+L}\le\frac{\varrho_m-1}{\varrho_m+1}.
\]
The last bound is less than $2.944\%$ for $m=2$, less than
$7.886\%$ for $m=3$, and less than $14.386\%$ for every $m\ge2$.
All these estimates are uniform for $0<\alpha<1/m$.
\end{theorem}

The theorem follows from the exact positive variational formulation,
the periodic form of Young's inequality, and Jensen's inequality.
Section~\ref{sec:global} proves it and gives weighted upper bounds for
every admissible real $d$. The original lower bound degenerates when
$d\to-\infty$, so a uniform relative comparison over all weights
requires an additional trial. A normalized power provides one explicitly.

\begin{theorem}[Uniform comparison over all weights and ranks]
\label{main:thm:weighted-global}
Let $m\ge2$, $0<\alpha<1/m$ and $2d+m\alpha<1$. The explicit
beta-function and power bounds $\underline L$ and $\overline U$
defined in \eqref{eq:weighted-sandwich} satisfy
\[
 0<\underline L\le\Lambda_m(\alpha,d)\le\overline U,
 \qquad
 1\le\frac{\overline U}{\underline L}\le
 C_*:=4\exp\left(-\frac{1+\gamma_E}{2}\right)<1.818,
\]
where $\gamma_E$ is Euler's constant. Consequently
\[
 \widehat\Lambda_m^{\mathrm w}
 :=\frac{2\underline L\,\overline U}{\underline L+\overline U},
 \qquad
 \left|\frac{\widehat\Lambda_m^{\mathrm w}}{\Lambda_m}-1\right|
 \le\frac{\overline U-\underline L}{\overline U+\underline L}
 \le\frac{C_*-1}{C_*+1}<0.29026.
\]
For $d\ge0$ the comparison factor can be replaced by $\sqrt3$.
For $d=0$ the sharper rank-dependent guarantees of
Theorem~\ref{main:thm:global} also hold for this new approximation.
\end{theorem}

This comparison is uniform even as $d\to-\infty$ and as the rank
increases. It estimates the deterministic constant itself; the LIL
continues to require the stochastic hypotheses of Section~M2.
The constant $C_*$ gives an explicit uniform approximation guarantee.

\subsection{The explicit lower bound and strictness}
Define
\[
 \begin{gathered}
 g(x)=K_\alpha\mathbf1(x)=
 \frac{x^{1-\alpha}+(1-x)^{1-\alpha}}{1-\alpha},\\
 E=\int_0^1g(x)\dd x=\frac2{(1-\alpha)(2-\alpha)},\qquad
 F=\int_0^1w(x)g(x)^m\dd x.
 \end{gathered}
\]
The trial function $h=1$ in (M3.1) gives the lower
bound of \cite{MoldavskayaLIL}:
\begin{equation}\label{eq:explicit-lower}
 \begin{split}
 L_{m,\alpha,d}&:=\frac{F}{E^{m/2}\sqrt{m!I_{m,\alpha,d}}}\\
 &=\left(\frac{2-\alpha}{2(1-\alpha)}\right)^{m/2}
 \frac{\displaystyle\sum_{k=0}^m\binom mk
 B(1-d+(1-\alpha)k,1+(1-\alpha)(m-k))}
 {\sqrt{m!I_{m,\alpha,d}}}.
 \end{split}
\end{equation}
The equality follows by expanding $g^m$ and using the beta integral.

\begin{proposition}\label{prop:strict}
For every parameter triple in (M2.1),
\[
 0<L_{m,\alpha,d}<\Lambda_m(\alpha,d)<\frac1{\sqrt{m!}}.
\]
\end{proposition}
\begin{proof}
All beta arguments in \eqref{eq:explicit-lower} are positive,
so $L_{m,\alpha,d}>0$.
For strictness set $b_0=wg^{m-1}$ and $v_0=b_0-(F/E)\mathbf1$.
These functions belong to $L^2$, because $d<1/2$ and $g$ is bounded.
For $h_\varepsilon=1+\varepsilon v_0$,
\[
 \left.\frac{d}{d\varepsilon}
    \langle h_\varepsilon,K_\alpha h_\varepsilon\rangle
 \right|_{0}=2\langle v_0,g\rangle=0,
\]
and
\[
 \left.\frac{d}{d\varepsilon}
   \int w(K_\alpha h_\varepsilon)^m\dd x\right|_{0}
 =m\langle b_0,K_\alpha v_0\rangle
 =m\langle v_0,K_\alpha v_0\rangle>0.
\]
The last inequality uses strict positive definiteness of the Riesz
energy for nonzero real $L^2$ functions extended by zero. Here $v_0\ne0$:
if $d>0$, $b_0$ diverges at zero; if $d<0$, it tends to zero while
remaining positive inside; if $d=0$, $g$ is nonconstant for $\alpha>0$.
Thus a sufficiently small positive perturbation strictly increases the
quotient. Differentiation is justified by
$K_\alpha:L^2\to L^\infty$ and $w\in L^1$.

For the upper bound take a nonnegative maximizer $f$ in
Proposition~M3.1. H\"older's inequality on the product
probability space gives $D_m\le Z_m^{1/m}$. Equality would force
\[
 |x-y|^{-m\alpha}=c f(x)^p f(y)^p\quad\text{almost everywhere}
\]
for a positive constant $c$. This factorization is impossible. For
almost every two distinct interior points $x_1,x_2$, it would make
$|x_1-y|/|x_2-y|$ constant almost everywhere in $y$, whereas this ratio
is nonconstant on an interval separated from both points. Thus
$D_m<Z_m^{1/m}$, which gives the strict upper inequality.
\end{proof}

\subsection{The periodic Young bound}
\begin{proposition}\label{prop:young}
For $d=0$, the upper bound $U_m$ and the comparison factor $\varrho_m$
of Theorem~\ref{main:thm:global} satisfy
\[
 \Lambda_m(\alpha,0)\le U_m(\alpha),\qquad
 \frac{U_m(\alpha)}{L_{m,\alpha,0}}\le\varrho_m,
 \qquad
 U_m(\alpha)\sim\frac2{\sqrt{m!}}\sqrt{1-m\alpha}.
\]
\end{proposition}
\begin{proof}
Identify $[0,1)$ with the unit circle $\mathbb T$ with probability
Haar measure. Its distance satisfies
$d_{\mathbb T}(x,y)\le|x-y|$, so the interval kernel is dominated by
$k(x-y)=d_{\mathbb T}(x,y)^{-\alpha}$. Young's convolution inequality
with $r=m/2$, $p=m/(m-1)$ and $1+1/m=1/p+1/r$ gives
\[
 D_m(\alpha,0)\le\|k\|_{L^r(\mathbb T)}
 =\left(2\int_0^{1/2}t^{-\alpha r}\dd t\right)^{1/r}
 =2^\alpha(1-m\alpha/2)^{-2/m}.
\]
This is classical Young's inequality with constant one; see, for
example, \cite[Example 2.1(1)]{Satomi}. Since
$Z_m=[(1-s)(1-s/2)]^{-1}$, Proposition~M3.1 and
the normalization ceiling give $\Lambda_m\le U_m$.

For the comparison, Jensen's inequality gives $F=\int g^m\ge E^m$
when $d=0$. Therefore
\[
 \frac{U_m}{L_{m,\alpha,0}}\le Q_m(s):=
 \frac{2^{s/2}}{1-s/2}
 \left[(1-s/m)(1-s/(2m))\right]^{m/2}.
\]
For $0<s<1$ and $m\ge2$,
\[
 \frac{d}{ds}\log Q_m(s)
 =\frac{\log2}{2}+\frac1{2-s}
 -\frac12\left(\frac1{1-s/m}+\frac1{2-s/m}\right)
 \ge\frac{\log2}{2}-\frac1{4-s}>0.
\]
The inequality follows by replacing $s/m$ with $s/2$ in the two
subtracted terms. Thus $Q_m(s)\le Q_m(1)=\varrho_m$.
Using $\log(1-x)<-x$ in its two factors yields
$\varrho_m<2\sqrt2e^{-3/4}$. The asymptotic of $U_m$ is immediate.
\end{proof}

\begin{corollary}\label{cor:relative}
The harmonic-mean approximation in Theorem~\ref{main:thm:global}
has the stated relative error. Among constants $a>0$ used to
approximate an unknown $\lambda\in[L,U]$, $a=2LU/(L+U)$ minimizes
$\sup_{\lambda\in[L,U]}|a/\lambda-1|$.
\end{corollary}
\begin{proof}
The endpoint errors are $a/L-1$ and $1-a/U$ when $L\le a\le U$;
outside that interval projection onto it improves the error. Equating
the two endpoint errors gives $a=2LU/(L+U)$ and worst error
$(U-L)/(U+L)$. Apply Proposition~\ref{prop:young}. In particular
\[
 \varrho_2=\frac{3\sqrt2}{4},\qquad
 \varrho_3=\frac{10\sqrt{10}}{27},\qquad
 \sup_{m\ge2}\frac{\varrho_m-1}{\varrho_m+1}
 \le\frac{2\sqrt2e^{-3/4}-1}{2\sqrt2e^{-3/4}+1}<0.14386.
\]
This proves Theorem~\ref{main:thm:global}.
\end{proof}

The minimum defining $U_m$ is essential: its second entry improves
the ceiling only after the positive solution of
$2^s(1-s)=1-s/2$, approximately $s=0.3663732586$. The lower bound
and $U_m$ both have square-root order at $s=1$. The limiting
prefactor of $\Lambda_m$ is identified in Section~M5.

\subsection{Weighted upper bounds and their scope}
\begin{proposition}\label{prop:weighted-upper}
For every triple in (M2.1), with $\eta=1-d$ and
$s=m\alpha$, let
\[
 Z_m=\frac{2\eta^2B(\eta,1-s)}{2\eta-s},\qquad
 \widetilde U_{m,\alpha,d}=
 \begin{cases}
 \displaystyle\frac{2^{s/(2\eta)}}
 {(1-s/(2\eta))\sqrt{m!Z_m}},&0\le d<1/2,\\[6pt]
 \displaystyle\frac{(2\eta)^{s/2}}
 {(1-s/2)\sqrt{m!Z_m}},&d\le0.
 \end{cases}
\]
Then
\[
 \Lambda_m(\alpha,d)\le
 \min\{(m!)^{-1/2},\widetilde U_{m,\alpha,d}\}.
\]
The two formulas agree at $d=0$.
\end{proposition}
\begin{proof}
Under $x=u^{1/\eta}$ the measure $\mu_d$ becomes uniform and
$L^p$ norms are preserved. If $\eta\le1$, convexity gives
$|u^{1/\eta}-v^{1/\eta}|\ge|u-v|^{1/\eta}$.
Apply the preceding Young bound with exponent $\alpha/\eta$.
Its integrability condition holds because $s<2\eta-1\le\eta$.
If $\eta\ge1$, the derivative of $u^{1/\eta}$ is at least $1/\eta$
on $(0,1)$, and hence
$|u^{1/\eta}-v^{1/\eta}|\ge\eta^{-1}|u-v|$.
The same bound yields
$D_m\le\eta^\alpha2^\alpha(1-s/2)^{-2/m}$.
Use (M3.5), keeping the original weighted $Z_m$.
\end{proof}

For $d\ge0$ the raw bound also equals
\[
 \widetilde U_{m,\alpha,d}
 =\left[\frac{2^{1+s/\eta}}
 {m!(2\eta-s)B(\eta,1-s)}\right]^{1/2}.
\]
For example, at $(m,\alpha,d)=(2,0.2,0.1)$ its value is
$0.7357244792$, so the minimum remains $1/\sqrt2$; at
$(2,0.45,0.02)$ it is $0.4215987397$ and improves the ceiling.
These examples illustrate why the normalization ceiling must be kept.
A uniform relative-error conclusion over all weights also needs a lower
bound that remains positive as $d\to-\infty$.

\begin{proposition}\label{prop:negative-d}
Fix $m\ge2$ and $0<\alpha<1/m$. As $\eta=1-d\to\infty$,
\[
 L_{m,\alpha,d}\longrightarrow0,\qquad
 \liminf_{d\to-\infty}\Lambda_m(\alpha,d)
 \ge\frac1{\sqrt{m!\Gamma(1-m\alpha)}}>0.
\]
Consequently a uniform relative comparison with this lower bound
cannot hold over all negative $d$.
\end{proposition}
\begin{proof}
With $s=m\alpha$ fixed, the beta ratio gives
$Z_m\sim\Gamma(1-s)\eta^s$. If $X$ has density
$\eta x^{\eta-1}$ then $X\to1$ in probability; since $g$ is bounded,
$\E g(X)^m\to g(1)^m$. Formula \eqref{eq:explicit-lower}, with
$I=W^2Z_m$, gives
\[
 L_{m,\alpha,d}\sim
 \frac{[g(1)/\sqrt E]^m}{\sqrt{m!\Gamma(1-s)}}\eta^{-s/2}.
\]
On the other hand the test $f=1$ and Jensen's inequality give
$D_m\ge(\E|X-Y|)^{-\alpha}$ for independent $X,Y\sim\mu_d$.
Direct integration yields
\[
 \E|X-Y|=\frac{2\eta}{(\eta+1)(2\eta+1)}\le\eta^{-1}.
\]
Thus $D_m\ge\eta^\alpha$. Substitute into (M3.5).
\end{proof}

\subsection{A power trial and uniform comparison over all weights}
\label{sec:weighted-comparison}
For $0<a<1$ and $e>a/2$ put
\begin{equation}\label{eq:pair-beta}
 \mathcal Z(a,e):=\frac{2e^2B(e,1-a)}{2e-a}.
\end{equation}
If $X,Y$ are independent with density $ex^{e-1}$ on $[0,1]$,
splitting the square into two triangles gives
$\mathcal Z(a,e)=\E|X-Y|^{-a}$. In particular
$Z_m=\mathcal Z(s,\eta)$, where $s=m\alpha$ and $\eta=1-d$.

\begin{lemma}[Explicit power trials]\label{lem:power-trial}
Let $p=m/(m-1)$. For every $\tau\ge0$ define
\[
 f_\tau(x)=(1+p\tau)^{1/p}x^{\eta\tau},\qquad
 H_\tau=\frac{(1+p\tau)^{2/p}}{(1+\tau)^2}
           \mathcal Z(\alpha,\eta(1+\tau)).
\]
Then $\|f_\tau\|_{L^p(\mu_d)}=1$ and $D_m\ge H_\tau$.
In particular, with
\begin{equation}\label{eq:power-choice}
 \tau_*:=\frac{\alpha(m-1)}{2-s},\qquad
 L^{(0)}:=\frac{\mathcal Z(\alpha,\eta)^{m/2}}{\sqrt{m!Z_m}},
 \qquad L^{(P)}:=\frac{H_{\tau_*}^{m/2}}{\sqrt{m!Z_m}},
\end{equation}
the following are explicit lower and upper bounds for $\Lambda_m$:
\begin{equation}\label{eq:weighted-sandwich}
 \underline L:=\max\{L_{m,\alpha,d},L^{(0)},L^{(P)}\},
 \qquad
 \overline U:=\min\{(m!)^{-1/2},\widetilde U_{m,\alpha,d}\}.
\end{equation}
\end{lemma}
\begin{proof}
Direct integration gives
\[
 \int_0^1 f_\tau(x)^p\eta x^{\eta-1}\dd x
 =(1+p\tau)\frac{\eta}{\eta(1+p\tau)}=1.
\]
Multiplication by $x^{\eta\tau}$ changes the power density from
exponent $\eta$ to exponent $\eta(1+\tau)$, with mass factor
$(1+\tau)^{-1}$. Doing this in both variables gives the quadratic
value $H_\tau$. Apply Proposition~M3.1 and the
previous bounds. All beta arguments are positive in the stated region.
\end{proof}

\begin{proof}[Proof of Theorem~\ref{main:thm:weighted-global}]
We first consider $1/2<\eta\le1$. For independent $X,Y\sim\mu_d$,
write
\[
 \delta(\eta):=\E|X-Y|
 =\frac{2\eta}{(\eta+1)(2\eta+1)}\le\frac38.
\]
The last inequality follows from
$6\eta^2-7\eta+3>0$. Jensen's inequality gives
$\mathcal Z(\alpha,\eta)\ge\delta(\eta)^{-\alpha}$.
With $t=s/(2\eta)$, admissibility implies
$t<s/(1+s)$. Therefore
\[
 \frac{\overline U}{\underline L}
 \le\frac{\widetilde U}{L^{(0)}}
 \le\frac{2^t}{1-t}\delta(\eta)^{s/2}
 \le q_+(s):=(1+s)2^{s/(1+s)}(3/8)^{s/2}.
\]
For $0<s<1$,
\[
 (\log q_+)'(s)=\frac1{1+s}
 +\frac{\log2}{(1+s)^2}+\frac12\log(3/8)
 \ge\frac12+\frac14\log2+\frac12\log(3/8)>0.
\]
Thus $q_+(s)\le q_+(1)=\sqrt3<C_*$.

Now suppose $\eta\ge1$. Gautschi's inequality
\cite[Eq.~5.6.4]{DLMF} gives, for $e>\alpha/2$,
\[
 \mathcal Z(\alpha,e)
 =\frac{e}{e-\alpha/2}
   \frac{\Gamma(e+1)\Gamma(1-\alpha)}{\Gamma(e+1-\alpha)}
 \ge e^\alpha\Gamma(1-\alpha).
\]
Consequently
\[
 H_\tau\ge\eta^\alpha\Gamma(1-\alpha)F(\tau),
 \qquad
 F(\tau):=\frac{(1+p\tau)^{2/p}}{(1+\tau)^{2-\alpha}}.
\]
Its logarithmic derivative is
$2/(1+p\tau)-(2-\alpha)/(1+\tau)$. It changes sign from
positive to negative exactly at $\tau_*$ in \eqref{eq:power-choice}.
Thus $\tau_*$ maximizes the explicit lower envelope $F$.
The identities
\[
 1+p\tau_*=(1-s/2)^{-1},\qquad
 1+\tau_*=(1-\alpha/2)/(1-s/2)
\]
then give the sharper parameter-dependent comparison
\begin{equation}\label{eq:weighted-ratio-refined}
 \frac{\overline U}{\underline L}\le
 \frac{\widetilde U}{L^{(P)}}\le R_m(s):=
 \frac{2^{s/2}(1-\alpha/2)^{m-s/2}}
 {(1-s/2)^{2-s/2}\Gamma(1-\alpha)^{m/2}}.
\end{equation}
Using the logarithmic gamma series, equivalently
\cite[Eq.~5.7.3]{DLMF},
\[
 \log\Gamma(1-\alpha)
 =\gamma_E\alpha+\sum_{k=2}^{\infty}\frac{\zeta(k)}k\alpha^k
 \ge\gamma_E\alpha+\frac12\alpha^2,
\]
and $\log(1-\alpha/2)\le-\alpha/2$, we obtain
\[
 (m-s/2)\log(1-\alpha/2)
 -\frac m2\log\Gamma(1-\alpha)
 \le-\frac{1+\gamma_E}{2}s.
\]
The terms $s^2/(4m)$ cancel. It follows that
\[
 R_m(s)\le q_-(s):=
 \frac{2^{s/2}\exp[-(1+\gamma_E)s/2]}{(1-s/2)^{2-s/2}}.
\]
For $v=1-s/2\in(1/2,1)$ its logarithmic derivative is
\[
 (\log q_-)'(s)=\frac12\{\log(2v)-\gamma_E+v^{-1}\}>0.
\]
Thus $q_-(s)\le q_-(1)=C_*$. This proves the ratio bound.
Corollary~\ref{cor:relative} proves the harmonic-mean error statement.
At $d=0$, $\overline U=U_m$ and
$\underline L\ge L_{m,\alpha,0}$, so the earlier guarantee is retained.
\end{proof}

\begin{corollary}[Behavior under concentration of the weight]
\label{cor:power-concentration}
Fix $m\ge2$ and $0<\alpha<1/m$. As $\eta=1-d\to\infty$,
\[
 L^{(0)}\longrightarrow
 \frac{\Gamma(1-\alpha)^{m/2}}{\sqrt{m!\Gamma(1-s)}},\qquad
 L^{(P)}\longrightarrow
 \frac{[\Gamma(1-\alpha)F(\tau_*)]^{m/2}}
      {\sqrt{m!\Gamma(1-s)}}>0.
\]
The second limit is a lower bound for
$\liminf_{d\to-\infty}\Lambda_m(\alpha,d)$ and exceeds the first.
\end{corollary}
\begin{proof}
The gamma-ratio asymptotic \cite[Section 5.11(iii)]{DLMF} gives
$\mathcal Z(a,e)\sim\Gamma(1-a)e^a$ for fixed $0<a<1$.
Substitute in $H_{\tau_*}$ and $Z_m$. Since $F'(0)>0$ and
$\tau_*>0$ is its maximum, $F(\tau_*)>F(0)=1$.
\end{proof}
The new trials remedy the degeneration in
Proposition~\ref{prop:negative-d}. Section~M8 proves convergence
of $\Lambda_m$ with a quadratic rate.